\documentclass[11pt]{amsart}
\usepackage{amsfonts, bm}
\usepackage{mathrsfs}
\allowdisplaybreaks

\usepackage{makecell}
\usepackage{hyperref}
\usepackage{enumitem}
\makeatletter

\renewcommand{\tocsection}[3]{%
	\indentlabel{\@ifnotempty{#2}{\bfseries\ignorespaces#1 #2\quad}}\bfseries#3}

\usepackage{amsmath}
\usepackage{amssymb}
\usepackage{multicol}
\usepackage{stmaryrd}
\usepackage{cite}
\usepackage{epsfig}
\usepackage{color}
\usepackage{graphics}
\usepackage{graphicx}
\usepackage{multicol,graphics}
\newcommand\bes{\begin{eqnarray}}
	\newcommand\ees{\end{eqnarray}}

\newtheorem{theorem}{Theorem}[section]
\newtheorem{lemma}[theorem]{Lemma}
\newtheorem{corollary}[theorem]{Corollary}

\newtheorem{remark}[theorem]{Remark}
\newtheorem{proposition}[theorem]{Proposition}
\numberwithin{equation}{section}

\theoremstyle{plain}
\newtheorem*{theorem*}{Theorem A}

\newcommand\bess{\begin{eqnarray*}}
	\newcommand\eess{\end{eqnarray*}}

\newcommand\yy{\infty}

\begin{document}
	
	\title[Trichotomy dynamics for biological invasion ]{Trichotomy dynamics of a free boundary model\\ for biological invasion}
	\author[H. Cao, Y. Du, W. Ni, X. Zhang ]{Hongkai Cao$^\dag$,  Yihong Du$^\ddag$, Wenjie Ni$^{\ddag}$ and Xiaoyan Zhang$^\dag$}
	\thanks{\hspace{-.5cm}
		\mbox{\ \ $^{\dag}$} School of Mathematics, Shandong University, Jinan, China.
		\\
		\mbox{\ \ $^\ddag$} School of Science and Technology, University of New England, Armidale, NSW 2351, Australia.
		\\
		\mbox{\small \ \ \ \  Emails:} chk@mail.sdu.edu.cn (Cao),\ ydu@une.edu.au (Du),\ wni2@une.edu.au (Ni), \ zxysd@sdu.edu.cn (Zhang)}
	\thanks{ The research of Du and Ni was supported by the Australian Research Council.
		The research of Cao and Zhang was supported by Natural Science Foundation of China (No.11571200) and the Natural Science Foundation of Shandong Province (No. ZR2021MA062)
	}
	
	\date{\today}

	\begin{abstract} It is well known that the reaction-diffusion equation $u_t=du_{xx}+f(u)$ with compactly supported nonnegative initial functions exhibits  trichotomy dynamics for bistable and combustion type $f(u)$ \cite{DM, zlatos}.	The same is true for the corresponding Stefan type free boundary problem \cite{DL}.	 In this paper, we reveal a rather different type of trichotomy for this reaction-diffusion equation under a new set of (free) boundary conditions,  arising as a model for biological invasion with $u(t,x)$ representing the density of an invading species over the one dimensional spatial regin  $[0, h(t)]$. 
	The evolution of the invading front $x=h(t)$ is governed by  $h'(t)=-\frac d\delta u_x(t, h(t))$ and $u(t, h(t))=\delta\in (\hat\theta_f, 1)$, with $\hat\theta_f \in [0, 1)$  uniquely determined by $f$; they allow $h(t)$ to advance as well as to retreat when time increases. At the fixed boundary $x=0$, the density is controlled by $u(t,0)=\delta_0\geq 0$.  We completely classify the long-time dynamics of the model when $f(u)$ is a monostable, or bistable, or combustion type nonlinear function.  In the biologically interesting case that $\delta_0<\delta$,  we show that there are exactly three scenarios: (i) successful spreading, (ii) finite-time vanishing, (iii) a transition state characterized by $h(t)\to l_*\in (0, \infty)$ and $u(t,x)\to w_*(x)$ as $t\to\infty$, where $(u(t,x), h(t))\equiv (w_*(x), l_*)$ is the unique stationary solution of the free boundary problem. The model here does not have the usual order-preserving property enjoyed by those considered in \cite{DM, zlatos, DL} and elsewhere (i.e., $u(0,x)\leq v(0,x)$ implies $u(t,x)\leq v(t,x)$ for all $t>0$ if $u$ and $v$ are two solutions of the problem), which  is intrinsically linked to the many novel features of the model.

		\bigskip
		
		\noindent \textbf{Keywords}: Invasion dynamics; free boundary; reaction-diffusion equation.
		\medskip
		
		\noindent\textbf{AMS Subject Classification (2000)}: 35K57,
		35R20
		
	\end{abstract}
	
	\maketitle
	\tableofcontents

	\section{Introduction}
	In this paper, we investigate the long-time behaviour of the following free boundary problem:
	\begin{equation}\label{1.1}
		\left\{\begin{array}{ll}
			u_{t}-d u_{x x}=f(u),\, & t>0,\, 0<x<h(t), \\
			u(t, 0)=\delta_{0},\,u(t, h(t))=\delta, & t>0, \\
			h^{\prime}(t)=-\frac{d}{\delta} u_{x}(t, h(t)),\, & t>0, \\
			h(0)=h_{0}, \,u(0, x)=u_{0}(x),\, & 0 \leq x \leq h_{0}.
		\end{array}\right.
	\end{equation}
	Here $f$ is a monostable, or bistable, or combustion type nonlinear function (to be made precise below), $d$, $h_0$  and $\delta$ are given positive constants with $\delta\in (\hat\theta_f,1)$,  where $\hat \theta_f\in [0, 1)$ is uniquely determined by $f$, and $\delta_0$ is a nonnegative constant.
	The initial function $u_{0}(x)$ is assumed to belong to $\mathcal{X}(h_{0})$, defined by
	\[\mathcal{X}(h_{0}):=\left\{\phi \in C^{2}\left(\left[0, h_{0}\right]\right): \phi(x)>0 \text { in }\left(0, h_{0}\right],\,\phi\left(0\right)=\delta_{0},\, \phi\left( h_{0}\right)=\delta\right\}.\]
	
	Problem \eqref{1.1} arises as a model for the spreading of an invasive species subject to an intervention, which turns out to be an intriguing mathematical problem, partly due to the lack of the usual order-preserving property (see \eqref{order} below for a precise description). The lack of this property is intrinsically linked to the fact that the population range $[0, h(t)]$ can expand as well as  shrink, which is a key new feature of the model, and is also the cause of numerous technical difficulties in understanding its behaviour. By making use of several new techniques, we obtain a  complete classification of the long-time dynamics of \eqref{1.1}. In particular, we reveal a trichotomy dynamics  in the biologically interesting case $\delta_0<\delta$; namely, as the initial data $(u_0, h_0)$ vary, the long-time dynamics has exactly three scenarios: 
	\begin{itemize}
		\item[(i)] successful spreading, 
		\item[(ii)] finite-time vanishing, 
		\item[(iii)] a transition state characterized by $h(t)\to l_*\in (0, \infty)$ and $u(t,x)\to w_*(x)$ as $t\to\infty$. 
	\end{itemize}
	The precise statement of our results is given in subsection 1.5 below, after some explanations on how \eqref{1.1} arises from an  invasion biology background.
	\medskip
	
	Trichotomy dynamics of reaction-diffusion equations of the type
	\begin{equation}\label{eq-kpp}
		u_t - d u_{xx} = f(u) \mbox{ for }  t>0,\ x\in\mathbb R,
	\end{equation}
has attracted considerable attention in the last a few decades. Such dynamics was conjectured by Kanel \cite{Ka} for \eqref{eq-kpp}  with compactly supported nonnegative initial function $u(0,x)$  when $f$ is of combustion type, namely for any such initial function, only three types of long-time behaviour can occur: As $t\to\infty$, for small initial function $u(0,x)$, $u(t,x)\to 0$; for large $u(0,x)$, $u(t,x)\to 1$; and for intermittent $u(0,x)$, 	$u(t,x)\to \theta\in (0,1)$, where $\theta\in (0,1)$ is the 
ignition temperature given by $f$. For bistable type of $f$, Aronson and Weinberger \cite{AW78} had a similar conjecture. These conjectures have been positively answered by Zlatos \cite{zlatos} for a special class of initial functions, and by Du and Matano \cite{DM} for rather general classes of initial functions. For example, if $u(0,x)=\phi_0(x)$ leads to vanishing ($u(t,x)\to 0$) and $u(0,x)=\phi_1(x)\geq \phi_0(x)$ leads to spreading ($u(t,x)\to 1$), then it follows from \cite{DM} that there exists a unique $\sigma^*\in (0,1)$ such that
 \begin{equation}\label{u_sigma}
u(0,x)=\phi_\sigma(x):=(1-\sigma)\phi_0(x)+\sigma\phi_1(x)
\end{equation}
 leads to vanishing when $\sigma\in [0, \sigma^*)$, leads to spreading when $\sigma\in (\sigma^*, 1]$, and leads to a  transition state when $\sigma=\sigma^*$. For the corresponding Stefan type free boundary problem (see \eqref{A} below), a similar trichotomy was obtained by Du and Lou \cite{DL}. As we will see in subsection 1.6 below, the trichotomy for \eqref{1.1} is rather different in nature, with a deep reason linked to the lack of the usual order-preserving property of \eqref{1.1} mentioned above. Some other  works on trichotomy dynamics can be found in  \cite{ADF, MW, P, WZZ} and the references therein. To the best of our knowledge,  this paper seems the first to prove a trichotomy for a model without the usual order-preserving property.\medskip
	
	Let us now briefly recall several models for propagation dynamics and see how \eqref{1.1} arises from an invasion biology consideration.
	
	\subsection{The Fisher-KPP model}
	
	Reaction-diffusion models of the form \eqref{eq-kpp} for species propagation can be traced back to the classical works of Fisher \cite{Fisher} and Kolmogorov-Petrovskii-Piskunov (KPP) \cite{KPP}, who in 1937 independently used traveling wave solutions to understand the propagation properties of what is now known as the Fisher-KPP equation, which is \eqref{eq-kpp} with
	$f$ satisfying 
	\[
	(\mathbf{f_m}):\ \ 
	f(0)=f(1)=0,~ f'(0)>0>f'(1),\ f(u)>0 \text{ for } u\in(0,1), f(u)<0 \text{ for } u\in(1,\infty),
	\]
	and 
	\[
	(\mathbf{f_{KPP}}):\ \ \ \ f(u) \leq f'(0)u \text{ for } u\in [0,1].
	\]
	It was shown  in \cite{Fisher, KPP} that \eqref{eq-kpp} admits traveling wave solutions of the form \( u(t,x) = \Phi(x - ct) \) if and only if \( c \geq c^* \), where \( c^* =2\sqrt{d\,f'(0)}\) is called the minimal speed. 
	Moreover, \( c^* \) coincides with the asymptotic spreading speed of solutions with compactly supported initial data, namely (see Aronson-Weinberger \cite{AW78}), for any $0<\epsilon\ll 1$,
	\[\begin{cases}
		\lim_{t\to\infty} \sup_{|x|\geq (c^*+\epsilon)t} u(t,x)=0,\\
		\lim_{t\to\infty}\sup_{|x|\leq (c^*-\epsilon)t}|u(t,x)-1|=0.
	\end{cases}
	\]
	These conclusions on $u(t,x)$ imply that for any $\sigma\in (0,1)$, the set
	\[
	\Omega_\sigma(t):=\{x\in\mathbb R: u(t,x)>\sigma\}
	\]
	spreads to the entire space $\mathbb R$ with asymptotic speed $c^*$\footnote{If only the monostable condition ${\bf (f_m)}$ holds, the above conclusions remain valid except for the formula $c^*=2\sqrt{d\ f'(0)}$, which holds if additionally  the KPP condition ${\bf (f_{KPP})}$ is satisfied.}. The set $\Omega_\sigma(t)$ can be interpreted  biologically  as the population range of the species at time $t$
	modelled by \eqref{eq-kpp}, where $u(t,x)$ stands for the density of the propagating species at time $t$ and spatial location $x$. The assumption that $u(0,x)$ is compactly supported means that the population range is a bounded set initially. 
	
	A more natural choice of the population range at time $t$ is $\Omega_0(t):=\{x\in\mathbb R: u(t,x)>0\}$, but  $\Omega_0(t)\equiv \mathbb R$ for $t>0$ even though $\Omega_0(0)$ is bounded by assumption. Therefore to capture the propagation dynamics via \eqref{eq-kpp}, it is necessary to use $\Omega_\sigma(t)$ rather than $\Omega_0(t)$ to stand for the population range.
	However, it is easily seen that  for any fixed $t>0$, $\Omega_\sigma(t)\to\mathbb R$ as $\sigma\to 0$. Therefore \eqref{eq-kpp} does not give the precise population range for positive time, although it captures the crucial propagation speed $c^*$, which is independent of $\sigma$.
	
	The exhibition of a propagation speed is a fundamental feature of \eqref{eq-kpp}, representing a phenomenon observed in numerous real world examples of species invasion via their range expansion  (after Skellam \cite{Skellam} in 1951), with  the spreading of muskrats in Europe at the beginning of the 20th century  perhaps the most well-known (see, e.g., \cite{SK, Skellam}). Such a propagating behaviour also arises in other areas, such as spreading of flames in combustion theory, etc. 
	
	These pioneering works  have inspired extensive further research along many lines which continues to this day, including, but not limited to, research on a broad range of problems with various nonlinearities and inhomogeneous media; see, as a small sample, \cite{AW78, BH03, BHM, Bramson, DM, dgm, FZ, FM,  HNRR16,  LZ, N09, P2, RRR, shen10, W82, W02, Xin, zlatos} and the references therein. This paper continues the research along the line of free boundary models.
	
\subsection{The Stefan type free boundary model}	It is possible to modify \eqref{eq-kpp} into a free boundary problem so that the population range is precisely described in the model.	In 2010, Du and Lin \cite{D} considered such a model, which has the following form:
	\begin{equation}\label{A}
		\left\{\begin{array}{ll}
			u_{t}-d u_{x x}=f(u), & t>0,\, g(t)<x<h(t), \\
			u(t, g(t))=u(t, h(t))=0, & t>0, \\
			g'(t)=-\mu u_x(t, g(t)),\ h^{\prime}(t)=-\mu u_{x}(t, h(t)), & t>0, \\
			g(0)=-h_0, \ h(0)=h_{0}, \, u(0, x)=u_{0}(x), & -h_0 \leq x \leq h_{0},
		\end{array}\right.
	\end{equation}
	where $[g(t), h(t)]$ stands for the population range at time $t$, and $\mu$ is a positive constant. The evolution of the free boundaries $x=g(t)$ and $x=h(t)$ is governed by the equations
	\[
	\begin{cases} u(t, g(t))=0,\ g'(t)=-\mu u_x(t, g(t)), \\
		u(t, h(t))=0,\ h^{\prime}(t)=-\mu u_{x}(t, h(t)),  \\
	\end{cases}
	\]
	which coincide with the Stefan conditions used in classical one-phase free boundary problems describing the melting of ice in contact with water (see \cite{Rub}). In a biological context, where very few first principles are available to help with the modelling, one may deduce these equations from some reasonable biological assumptions (see, for example, \cite{BDK}).

	In \cite{D}, only the special logistic function $f(u)=u(a-bu)$ was considered, where $a$ and $b$ are positive constants.	In 2015, Du and Lou \cite{DL} extended the model to include rather general $f(u)$, including monostable, bistable, and combustion type nonlinearities.  
	
	Let us recall that a $C^1$ function $f(u)$ is called \underline{monostable} if ${\bf (f_m)}$ holds, and it is called \underline{bistable} if it satisfies
	\[
	{\bf (f_b):} \ \ \ \begin{cases} f(0)=f(\theta)=f(1)=0 \mbox{ for some } \theta\in (0,1),\ f'(0)<0,\ f'(1)<0, \\
		f(u)<0 \mbox{ for } u\in (0,\theta)\cup (1,\infty),\ f(u)>0 \mbox{ for } u\in (\theta, 1),\
		\int_0^1f(u)du>0.
	\end{cases}
	\]
	A \underline{combustion type} of $f(u)$ is a $C^1$ function with the following properties,
	\[
	{\bf (f_c):} \ \ \ \begin{cases}  
		f(u)=0 \mbox{ for } u\in [0,\theta] \mbox{ and some } \theta\in (0,1),\\ f(1)=0>f'(1),\ f(u)>0 \mbox{ for } u\in (\theta, 1),\ f(u)<0 \mbox{ for } u>1.
	\end{cases}
	\]
	
	A bistable $f(u)$ is often used to reflect the so called Allee effect in biology, namely low population density may negatively impact the population growth due to reduced chances to reproduce, etc. In this context, the constant $\theta\in (0,1)$ in ${\bf (f_b)}$ is also known as the threshold Allee density. A combustion type $f(u)$ 
	can be used to reflect weak Allee effect  ($\theta$ is known traditionally as the ignition temperature). 
	
	It is easily seen that if $f(u)$ satisfies ${\bf (f_b)}$ or ${\bf (f_c)}$, then there exists a unique $\theta^{*} = \theta_{f}^{*} \in [\theta, 1)$ such that
\begin{equation}\label{theta*}
	\int_{0}^{\theta^{*}} f(s) \,ds = 0, \qquad \int_{0}^{u} f(s) \,ds > 0 \quad \text{for } u \in (\theta^{*}, 1].
\end{equation}
For convenience of presentation, we define
\[
\hat\theta_f:=\begin{cases} 0 &\mbox{ if $f$ satisfies ${\bf (f_m)}$},\\
\theta^*_f&\mbox{ if $f$ satisfies ${\bf (f_b)}$ or ${\bf (f_c)}$}.
\end{cases}
\]
Moreover, we say $f(u)$ satisfies ${\bf (F)}$ if it is a $C^1$ function satisfying ${\bf (f_m)}$, or ${\bf (f_b)}$, or ${\bf (f_c)}$.
	
	With these three types of nonlinearities,  as time goes to infinity, generically the species modelled by
	\eqref{A} either spreads successfully, or vanishes; namely, as $t\to\infty$, either
	\begin{itemize}
		\item {\bf Spreading:}
		$\begin{cases} (g(t), h(t))\to (-\infty, \infty),\\
			u(t,x)\to 1 \mbox{ locally uniformly in } x\in\mathbb R,
		\end{cases}$\\
		or 
		
		\item {\bf Vanishing:} $\begin{cases} (g(t), h(t))\to (g_\infty, h_\infty), \mbox{ which is a finite interval},\\
			u(t,x)\to 0 \mbox{  uniformly for } x\in [g(t), h(t)].
		\end{cases}$
	\end{itemize}
	\noindent
	To be precise, for bistable and combustion type of $f(u)$, a non-generic transition case also occurs, revealing a trichotomy dynamics of \eqref{A}; see \cite{DL} for details.
	
	It was further shown in \cite{DL} that for the  three types of $f(u)$ defined above, for every $\mu>0$, there exists a unique pair \( (c, q)=\left(c_{*}, q_{*}\right) \), with \( c_{*}=c_{*}(\mu)>0 \), satisfying
	\begin{equation}\label{sw}
		\left\{\begin{array}{l}
			d q^{\prime \prime}-c q^{\prime}+f(q)=0,\, q>0 \, \text{ in }\,(0,\, \infty), \\
			q(0)=0,\, q(\infty)=1,\, q^{\prime}\left(0\right)=\frac c\mu.
		\end{array}\right.
	\end{equation}
	Moreover, \( q_{*}^{\prime}(z)>0 \) for all \( z \geq 0 \), \( c_{*}(\mu)<c_{0} \), and \( \lim _{\mu \rightarrow \infty} c_{*}(\mu)=c_{0} \), where \( c_{0} \) denotes the asymptotic spreading speed of the corresponding Cauchy problem \eqref{eq-kpp}.
	Furthermore, $c_*(\mu)$ is the asymptotic spreading speed of \eqref{A}: if successful spreading happens, then
	\[
	\lim_{t\to\infty} \frac{h(t)}t=\lim_{t\to\infty}\frac{g(t)}{-t}=c_*(\mu).
	\]

	Sharp propagation profile of \eqref{A} was  obtained in \cite{DMZ}, namely the following conclusions hold when spreading is successful: As $t\to\infty$,
	\[
	\begin{cases}
		h(t)-c_*t\to h^0,\ g(t)+c_*t\to g^0,\\
		u(t,x)-q_*(h(t)-x)\to 0 \mbox{ uniformly for } x\in [0, h(t)],\\
		u(t,x)-q_*(x-g(t))\to 0 \mbox{ uniformly for } x\in [g(t), 0],
	\end{cases}
	\]
	where $g^0$ and $h^0$ are finite numbers (depending on the initial function $u_0$).
	\medskip
	
	\subsection{A new  free boundary model} Recently,  Du \cite{Du2023a} considered a modified version of \eqref{A}, which has the following form:
	\begin{equation}\label{B}
		\left\{\begin{array}{ll}
			u_{t}-d u_{x x}=f(u), & t>0, \,g(t)<x<h(t), \\
			u(t, g(t))=u(t, h(t))=\delta, & t>0, \\
			g^{\prime}(t)=-\frac{d}{\delta} u_{x}(t, g(t)),  \
			h^{\prime}(t)=-\frac{d}{\delta} u_{x}(t, h(t)), & t>0, \\
			-g(0)=h(0)=h_{0}, \,u(0, x)=u_{0}(x), \,& -h_{0} \leq x \leq h_{0},
		\end{array}\right.
	\end{equation}
	where  $f$ satisfies ${\bf (f_m)}$ and $\delta\in (0,1)$.
	In \eqref{B}, the evolution of the free boundaries $x=g(t)$ and $x=h(t)$ is determined by
	\[\begin{cases}
		u(t, h(t))=\delta,\ h^{\prime}(t)=-\frac{d}{\delta} u_{x}(t, h(t)), \\
		u(t, g(t))=\delta,\ g^{\prime}(t)=-\frac{d}{\delta} u_{x}(t, g(t)).
	\end{cases}\]
	These equations can be deduced from the biological assumption that  the positions of the fronts $x=g(t)$ and $x=h(t)$ are balanced  to keep the population density there at some preferred level $\delta$; see \cite{Du2023a} for details. 
	\medskip
	
	Although \eqref{B} looks similar to \eqref{A}, it has several very distinct features.
	The first striking difference  is that the population range $[g(t), h(t)]$ in \eqref{B} may expand as well as shrink when time increases, while the population range of \eqref{A} can only expand due to the strong maximum principle and Hopf boundary lemma: For $t>0$,
	\[
	\begin{cases}\mbox{$u(t,x)>0$ for $g(t)<x<h(t)$}\\
		\mbox{and $u(t,g(t))=u(t, h(t))=0$}
	\end{cases} \implies \ \ \  \begin{cases} \mbox{$h'(t)=-\mu u_x(t, h(t))>0,$}\\
		\mbox{$g'(t)=-\mu u_x(t, g(t))<0$.}\end{cases}
	\]
	
	Secondly, both \eqref{A} and \eqref{eq-kpp} have the following order-preserving property:
	\begin{equation}\label{order}
		u_0(x)\leq \tilde u_0(x)\implies u(t,x)\leq \tilde u(t,x) \mbox{ for all } t>0,
	\end{equation}
	namely, if the initial size of one population $u$ is no bigger than that of another population $\tilde u$, then this order is preserved for all later time $t>0$.
	This property is lost for \eqref{B}, which induces new dynamical behaviour, and also causes considerable technical difficulties for the analysis of \eqref{B}.
	
	Thirdly, it was shown in \cite{Du2023a} that for \eqref{B}, spreading is always successful when ${\bf (f_m)}$ holds, while for \eqref{A}, a spreading-vanishing dichotomy holds for monostable $f$ (see \cite{DL}). Moreover, it was shown in \cite{DLNS} that \eqref{B} exhibits the behaviour of some super invaders: The Allee effect is eliminated if the preferred density level $\delta$ at the range boundary is chosen properly.  More precisely, if the Allee effect is included by assuming $f(u)$ satisfying ${\bf (f_b)}$ (for strong Allee effect) or satisfying ${\bf (f_c)}$ (for weak Allee effect), then for every admissible initial population $u_0\in\mathcal X(h_0)$, \eqref{B} always exhibits successful spreading as for the case of monostable $f(u)$ where no Allee effect is included by assumption, provided that $\delta\in (\theta_f^*, 1)$, where $\theta_f^*$ is given by \eqref{theta*}. Let us recall that, in contrast,   the Affee effect is reflected by the dynamics of both the earlier models \eqref{eq-kpp} and \eqref{A}: small initial population leads to vanishing when ${\bf (f_b)}$ or ${\bf (f_c)}$ holds.

	Let us now recall the main results in \cite{Du2023a} and \cite{DLNS} for \eqref{B};  the model \eqref{1.1} will arise naturally from these results. The existence of a unique global solution of \eqref{B} under rather general assumptions has been established in \cite{DLNS}. Consistent spreading is proved in Theorem 1.1 of \cite{DLNS}, which  is an extension of the corresponding result in \cite{Du2023a}, where only monostable $f$ was considered. We restate the conclusions on the long-time dynamics of \eqref{B} below. \medskip
	
	\noindent
	{\bf Theorem A.} (\underline{Successful invasion}) {\it
		Suppose that $f$ satisfies ${\bf (f_m)}$, or ${\bf (f_b)}$, or ${\bf (f_c)}$,  and $\delta\in (\hat \theta_f, 1)$.
		Then for every initial function $u_0 \in \mathcal X(h_0)$, the unique solution $(u(t,x), g(t), h(t))$ of \eqref{B} satisfies, as $t \to \infty$,
		\begin{align*}
			(g(t), h(t)) \to (-\infty, \infty), \quad u(t, x) \to 1 \quad \text{locally uniformly in } x \in \mathbb{R}.
		\end{align*}
		Furthermore, there exist some constants $\hat{h}, \hat{g} \in \mathbb{R}$ $(\mbox{depending on $u_0$})$ such that
		\begin{align*}\begin{cases}
				\lim_{t \to \infty}[h(t) - c_* t] = \hat{h}, \quad \lim_{t \to \infty} h'(t) = c_*,\\
				\lim_{t \to \infty}[g(t) + c_* t] = \hat{g}, \quad \lim_{t \to \infty} g'(t) = -c_*,\\
				\lim_{t \to \infty} \sup_{x \in [0, h(t)]} |u(t, x) - q_*( h(t) - x)| = 0,\\
				\lim_{t \to \infty} \sup_{x \in [g(t), 0]} |u(t, x) - q_*( x - g(t))| = 0,
			\end{cases}
		\end{align*}
		where $(c_*, q_*(x))$ is the unique solution pair $(c,q(x))$ of 
		\begin{equation}\label{semi1}
			\begin{cases}
				dq'' - cq' + f(q) = 0, \quad q > 0 \quad \text{in } (0,\infty), \\
				q(0) = \delta, \quad q(\infty) = 1, \quad q'(0) = \frac{c \delta}{d},\quad q'>0 \mbox{ in } [0,\infty).
			\end{cases}
		\end{equation}
	}
	
	It follows from Theorem 3.4 of \cite{DLNS} that vanishing always happens if $\delta>1$ in Theorem A. More precisely we have the following conclusion:\medskip
	
	\noindent {\bf Theorem B.} (\underline{Vanishing})
	{ In Theorem A, if the assumption  $\delta\in (\hat\theta_f,1)$ is replaced by $\delta>1$, 
		then for every  $u_0 \in \mathcal X(h_0)$, vanishing happens to \eqref{B}, namely, there exists $\xi^*\in\mathbb R$ depending on $f$ and $u_0$ such that 
		\[
		\lim_{t\to\infty} g(t)=\lim_{t\to\infty} h(t)=\xi^*, \ \lim_{t\to\infty} u(t,x)=\delta \mbox{ uniformly for } x\in [g(t), h(t)].
		\]
	}
	
	When $\delta=1$, the dynamics of \eqref{B} exhibits a  behaviour between successful spreading (as in Theorem A) and vanishing (as in Theorem B), which we call a surviving state and  is described by the following theorem (which is a consequence of Theorem 3.5 in \cite{DLNS}).  
	\medskip
	
	\noindent
	{\bf Theorem C.} (\underline{Surviving})
	{\it If $\delta=1$ in Theorem A, then for every initial function $u_0 \in \mathcal X(h_0)$, the unique solution $(u(t,x), g(t), h(t))$ of \eqref{B}
		satisfies, as $t \to \infty$,
		\[\begin{cases}
			[g(t), h(t)] \to [g_\infty, h_\infty] \mbox{ is a finite interval with $h_\infty>g_\infty$, }\smallskip\\
			\   u(t, x) \to 1 \quad \text{ uniformly for } x \in [g(t), h(t)].
		\end{cases}	
		\]}
	
	\medskip
	
	\noindent {\bf Remark:}	
The constant {$\hat\theta_f$} in Theorem A is sharp:

 ${\bf (f_c)}$ hods \& {$\delta\in (0, \hat\theta_f]$} $\implies$ $(u, g, h)\equiv (\delta, -h_0, h_0)$ is a {stationary solution} of \eqref{B}.
 
  ${\bf (f_b)}$ holds \& {$\delta\in (\theta, \hat\theta_f)$} $\implies$ \eqref{B}  has infinitely many stationary solutions (see \cite{DLNS} for details).
\medskip

Problem \eqref{B} for $\delta\in (0, \hat\theta_f)$ with $f$ satisfying ${\bf (f_b)}$ or ${\bf (f_c)}$ is still not completely understood for general  initial data. 
On the other hand, \eqref{B} in some heterogeneous media has been investigated theoretically in \cite{DMW} (for the time periodic case with monostable nonlinearity) and in \cite{DHL} (for the case of shifting environment), and numerically in \cite{SSD} (when the environment has an invasion barrier).

	\subsection{Intervention and the resulting model \eqref{1.1}}
	
	In this paper, we explore a variation of \eqref{B} in a different direction from \cite{DMW, DHL, SSD}. Our starting point is the scenario of Theorem A, where successful invasion always happens. We are interested in  how intervention might change the long-time dynamics. 
	
	We consider a very simple  intervention (or control) from some time $t_0\geq 0$,  over a fixed region $[\tilde g_0, \tilde h_0]\subset (g(t_0),  h(t_0))$, where  the population density $u(t,x)$ is kept at a fixed level $\delta_0\geq 0$  for all $t\geq t_0$:
	\[
	u(t,x)=\delta_0 \mbox{ for } t\geq t_0,\ x\in [\tilde g_0, \tilde h_0].
	\]	
	Under this intervention, the dynamics of \eqref{B} for $t>t_0$  is determined by the following two independent problems:
	\[
	\left\{\begin{array}{ll}
		U_{t}-d U_{x x}=f(U),\, & t>t_0,\, \tilde h_0<x<H(t), \\
		u(t, \tilde h_0)=\delta_{0},\,U(t, H(t))=\delta, & t>t_0, \\
		H^{\prime}(t)=-\frac{d}{\delta} U_{x}(t, H(t)),\, & t>t_0, \\
		H(t_0)=h(t_0),\ U(t_0,x)=u(t_0,x), & x\in [\tilde h_0, h(t_0)],
	\end{array}\right.
	\]
	and 
	\[
	\left\{\begin{array}{ll}
		V_{t}-d V_{x x}=f(V),\, & t>t_0,\, G(t)<x<\tilde g_0, \\
		V(t, \tilde g_0)=\delta_{0},\,V(t, G(t))=\delta, & t>t_0, \\
		G^{\prime}(t)=-\frac{d}{\delta} V_{x}(t, G(t)),\, & t>t_0, \\
		G(t_0)=g(t_0),\ V(t_0,x)=u(t_0,x), & x\in [g(t_0), \tilde g_0],
	\end{array}\right.
	\]
	
	It is easy to see that by a suitable change of variables, the second system can be transformed into a system of the same form as the first. Therefore we only need to investigate the first. 
	
	To simplify notations, we rewrite the first intervention system in the following standard form:
	\begin{equation*}
		\left\{\begin{array}{ll}
			u_{t}-d u_{x x}=f(u),\, & t>0,\, 0<x<h(t), \\
			u(t, 0)=\delta_{0},\,u(t, h(t))=\delta, & t>0, \\
			h^{\prime}(t)=-\frac{d}{\delta} u_{x}(t, h(t)),\, & t>0, \\
			h(0)=h_{0}, \,u(0, x)=u_{0}(x),\, & 0 \leq x \leq h_{0}.
		\end{array}\right.
	\end{equation*}
	Clearly this is exactly \eqref{1.1}, except that the initial function inherited from the problem for $(U, H)$ may not satisfy $u_0(0)=\delta_0$, but this does not cause any problem for the mathematical analysis; for example,  we can simply replace $t_0$ by any $\tilde t_0>t_0$ to avoid this inconvenience. It is also natural from the biological point of view to allow some transition time (from $t_0$ to $\tilde t_0$) for the intervention to take effect. 	
	
	From now on,	we will treat \eqref{1.1} as a new model and call it a free boundary model for invasion with intervention. We want to determine how the choice of $\delta_0$ affects the dynamics of \eqref{1.1}. It turns out that this is a highly nontrivial task.

	\subsection{Main results for \eqref{1.1}}
	
	Our main results for problem \eqref{1.1} are listed below. 	
	
	\begin{theorem}[\underline{Existence and uniqueness}]\label{th1.1}
		Suppose that $f$ satisfies ${\bf (f_m)}$, or ${\bf (f_b)}$, or ${\bf (f_c)}$, $u_{0} \in \mathcal{X}(h_{0})$,   $\delta_0\geq 0$ and $\delta>0$. Then \eqref{1.1} admits a unique solution $(u,h)$ for $t\in (0, T_{\mathrm{max}})$, and
		\begin{align*}
			(u,  h) \in C^{1+\frac{\alpha}{2}, 2+\alpha}(\Omega_{T_{\mathrm{max}}}) \times C^{1+\frac{\alpha}{2}}((0, T_{\mathrm{max}})),
		\end{align*}
		where $ \alpha \in (0, 1)$, $\Omega_{T_{\mathrm{max}}}:=\left\{(t, x) \in \mathbb{R}^{2}: t \in(0, T_{\mathrm{max}}), \,x \in[0, h(t)]\right\},$
		and  $T_{\mathrm{max}}\in (0,+\infty]$ is  the maximal existence time of the solution.
		
	\end{theorem}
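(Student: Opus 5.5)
We plan to follow the standard approach for one-phase Stefan type problems (as in \cite{D, DL, DLNS}): straighten the free boundary, obtain local existence by a contraction mapping argument, and then derive uniqueness and regularity from the fixed point characterisation. There are two differences from \eqref{A}. First, the Dirichlet value at the moving boundary is $\delta\neq 0$. Second, $h'$ need not be positive, so we cannot use monotonicity of $h$ to bound anything. The argument is therefore purely local in time, and $T_{\max}$ is simply defined as the supremum of existence times.

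\textbf{Step 1 (reduction to a fixed domain).} To keep the condition $u(t,h(t))=\delta$ simple, we introduce the variable $y=x/h(t)$ and set $w(t,y)=u(t,h(t)y)$. Then $w$ solves, on $(0,T)\times(0,1)$,
\[
w_t-\frac{d}{h^2}w_{yy}-\frac{h'y}{h}w_y=f(w),\qquad w(t,0)=\delta_0,\quad w(t,1)=\delta,
\]
together with $h'(t)=-\frac{d}{\delta h(t)}w_y(t,1)$. The initial data $w(0,y)=u_0(h_0y)$ satisfies the compatibility conditions of order zero, since $u_0(0)=\delta_0$ and $u_0(h_0)=\delta$. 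Note that $\delta>0$ is essential here: it makes the Stefan coefficient $d/\delta$ finite.

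\textbf{Step 2 (contraction).} Fix $T$ small. Let $\Sigma_T$ be the set of $h\in C^1([0,T])$ with $h(0)=h_0$, $h'(0)=-\frac{d}{\delta}u_0'(h_0)$, and $\|h'-h'(0)\|_{C([0,T])}\le 1$; for small $T$ this gives $h\ge h_0/2$. For given $h\in\Sigma_T$, we solve the linear-coefficient semilinear problem for $w$. This is uniformly parabolic with bounded drift. Since $f$ is $C^1$, we can first work with a truncation of $f$, and then use the maximum principle to show that $0\le w\le \max\{1,\delta_0,\delta,\|u_0\|_\infty\}$, so the truncation is irrelevant. $L^p$ theory and Sobolev embedding then give $\|w\|_{C^{(1+\alpha)/2,1+\alpha}}\le C$, with $C$ independent of $T\le 1$. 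Define $\mathcal F(h)(t)=h_0-\int_0^t\frac{d}{\delta h}w_y(s,1)\,ds$. The bound on $w_y$ in $C^{\alpha/2}$ shows that $\mathcal F$ maps $\Sigma_T$ into itself for small $T$. Subtracting the equations for two choices $h_1,h_2$ and applying the same $L^p$ estimates to the difference gives
\[
\|\mathcal F(h_1)-\mathcal F(h_2)\|_{C^1}\le CT^{\alpha/2}\|h_1-h_2\|_{C^1}.
\]
So $\mathcal F$ is a contraction for small $T$, which yields local existence and uniqueness.

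\textbf{Step 3 (regularity and continuation).} Once $h\in C^{1+\alpha/2}$, the coefficients of the $w$-equation are Hölder continuous. Schauder interior-in-time estimates then give $w\in C^{1+\alpha/2,2+\alpha}$ on $[\epsilon,T]\times[0,1]$, and hence $(u,h)$ has the regularity stated in Theorem \ref{th1.1}. For continuation, we restart the procedure at any time $t_1<T_{\max}$ with initial data $u(t_1,\cdot)$, which satisfies the compatibility conditions. The uniqueness from Step 2 allows the local solutions to be glued, which defines a maximal solution on $(0,T_{\max})$. Uniqueness on the whole interval then follows by the usual argument: take the first time at which two solutions differ and apply the local uniqueness there.

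\textbf{Main obstacle.} The delicate point is the contraction estimate in Step 2. The difference $w_1-w_2$ satisfies an equation with source terms involving $(h_1^{-2}-h_2^{-2})w_{2,yy}$ and $(h_1'/h_1-h_2'/h_2)y\,w_{2,y}$. Controlling these requires an $L^p$ bound on $w_{2,yy}$, which is uniform in $T$; this is the standard approach of \cite{D}. Unlike \eqref{A}, we cannot exploit $h'>0$, but the argument never uses it. One additional check is that $h$ stays bounded away from $0$ on short intervals, which the definition of $\Sigma_T$ guarantees. Finally, the positivity of $u$ in $(0,h(t)]$, which the solution class requires, follows from the strong maximum principle: $f(0)=0$ and both boundary values are nonnegative, with $\delta>0$.
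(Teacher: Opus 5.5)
Your proposal is correct. It follows the same overall scheme as the paper (straighten, contraction, Schauder, continue), but it organises two steps differently.

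First, the fixed-point setup. The paper's Theorem \ref{th2.1} (Section 4) runs the contraction on the pair $(U,h)\in X_{1,T}\times X_{2,T}$, freezing $U$ inside $f(U)$ so that each iterate $\bar U$ solves a \emph{linear} problem. You iterate on $h$ alone and solve the semilinear problem for each fixed $h$. This costs you the truncation/maximum-principle step to get an $L^\infty$ bound, and you must absorb $f(w_1)-f(w_2)$ as a bounded zeroth-order coefficient. In return you have a one-component fixed-point space, and a transparent contraction estimate: the difference of the $h'$ iterates vanishes at $t=0$, hence is $O(T^{\alpha/2})$.

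The one thing you assert without justification is that the $L^p$ and embedding constants are independent of $T\le 1$. The paper secures exactly this with its ``extension trick'': it identifies $X_T$ with functions frozen after $T$ on a fixed interval $[0,T_1]$. You can do the same by extending each $h\in\Sigma_T$ to $[0,1]$ with the same bounds, solving there, and restricting.

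Second, the maximal solution. The paper invokes the a priori bound on $|h'|$ from Lemma \ref{le2.7}. You instead restart at interior times, glue by local uniqueness, and define $T_{\max}$ as a supremum; restarting works because $u(t_1,\cdot)\in\mathcal X(h(t_1))$ by Schauder regularity and the strong maximum principle. For the statement as written your route is more economical. No a priori estimate is needed merely to have a maximal solution; the bound of Lemma \ref{le2.7} is genuinely needed only for the continuation criterion in Lemma \ref{lemma2.8}. Your first-disagreement-time argument also makes global uniqueness explicit, which the paper leaves implicit.
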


	\begin{theorem}[\underline{Long-time dynamics}]\label{th1.2b}
		Assume the conditions of Theorem \ref{th1.1} are satisfied with $\delta\in(\hat\theta_f,1)$. 
		\begin{enumerate}
			\item[\textup{\bf (a)}] If $\delta_{0} \geq \delta>0$, then successful spreading always happens, i.e.
			\[
			\mbox{$T_{\mathrm{max}} = \infty$ and } \begin{cases}  \lim_{t \to \infty} h(t) = \infty,\\
				\lim_{t \to \infty} u(t, x) = v(x) \mbox{ locally uniformly in $[0,\infty)$,}\end{cases}
			\]
			where $v(x)$ is the unique solution  of  
			\begin{equation}\label{1.3}
				\begin{cases}
					dv^{\prime \prime} + f(v) = 0, & x \in (0, \infty), \\
					v(0) = \delta_{0},\  v(\infty)=1.
				\end{cases}
			\end{equation}
			
			\item[\textup{\bf (b)}] If $0\leq \delta_{0} <\delta<1$, then one of the following
			must happen:\smallskip
			
			\begin{enumerate}
				\item[\textup{(i)}] \underline{Finite-time vanishing}: 
				\[\mbox{$T_{\mathrm{max}} < \infty$ and $\displaystyle \lim_{t \to T_{\mathrm{max}}^{-}} h(t) = 0$.}
				\]
				
				\item[\textup{(ii)}] \underline{Transition}: 				\[
				\mbox{$T_{\mathrm{max}} = \infty$ and }	\begin{cases}\lim_{t \to \infty} h(t) = l_*\in (0, \infty),\\
					\lim_{t \to \infty} \sup_{x \in [0, h(t)]} |u(t, x) - w_*(x)| = 0,
				\end{cases}
				\]
				where 
				\begin{equation}\label{l*}
					l_*:= \displaystyle\int_{\delta_0}^\delta \Big[\frac{2}{d} \int_x^\delta f(s)  ds\Big]^{-1}dx,\end{equation}
				and $w_*$ is the unique solution of the following problem
				\begin{equation}\label{1.7}
					\left\{
					\begin{array}{ll}
						d w_*'' + f(w_*) = 0, & x \in (0, l_*), \\
						w_*(0) = \delta_0, \, w_*(l_*) = \delta, \, w_*'(l_*) = 0.
					\end{array}
					\right.
				\end{equation}
				
				\item[\textup{(iii)}] \underline{Successful spreading}: 
				\[
				\mbox{$T_{\mathrm{max}} = \infty$ and } \begin{cases} \displaystyle \lim_{t \to \infty} h(t) = \infty,\\
					\lim_{t \to \infty} u(t, x) = v(x) \mbox{ locally uniformly in $[0,\infty)$,}\end{cases}
				\]
				where $v(x)$ is the unique solution  of  \eqref{1.3}.
				
			\end{enumerate}
			
		\end{enumerate}
	\end{theorem}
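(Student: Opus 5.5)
The plan is to treat part (a) and part (b) separately. In both parts, for a global solution with $h$ bounded, I would identify the limit with the unique stationary solution of \eqref{1.1}. Two preliminary facts drive everything.

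\textbf{Step 1 (stationary solutions).} Let $(w,l)$ be a stationary solution, so $dw''+f(w)=0$ on $(0,l)$, $w(0)=\delta_0$, $w(l)=\delta$, $w'(l)=0$. The first integral is $\frac d2 w'^2=F(\delta)-F(w)$ with $F(u)=\int_0^u f$. Since $\delta>\hat\theta_f$, we have $F(\delta)-F(s)=\int_s^\delta f>0$ for $s\in[0,\delta)$. Hence if $\delta_0<\delta$, $w$ is strictly increasing, it is unique, and integrating $dx=dw/w'$ gives exactly \eqref{l*} and \eqref{1.7}. If $\delta_0\ge\delta$, the same identity shows no stationary solution with finite $l$ exists.

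\textbf{Step 2 (a zero-number monotone quantity replacing order preservation).} Differentiate the boundary condition $u(t,h(t))=\delta$. This gives $u_t(t,h(t))=-u_x h'=\frac{\delta}{d}h'(t)^2\ge 0$, while $u_t(t,0)=0$. The function $u_t$ solves the linear equation $\eta_t=d\eta_{xx}+f'(u)\eta$ on $(0,h(t))$, so by Angenent's zero-number theorem the number of sign changes of $u_t(t,\cdot)$ is finite and nonincreasing for $t>0$. The same applies to $u(t,\cdot)-w_*(\cdot)$ on the common interval, since at $x=h(t)\wedge l_*$ its sign is determined by the monotone profile $w_*$ and $u=\delta$.

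From Step 2 I would extract that $h(t)$ is eventually monotone. The idea is that, at the right end, the sign of $h'(t)$ is tied to the sign of $u_x(t,h(t))$, and a degenerate zero of $u-w_*$ or of $u_t$ at the boundary cannot recur infinitely often. Consequently $h(t)$ has a limit $h_\infty\in[0,\infty]$ whenever $T_{\max}=\infty$, and the blow-up alternative from Theorem \ref{th1.1} reduces to $h(t)\to 0$ in finite time.

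\textbf{Step 3 (classifying the limit).} If $h_\infty\in(0,\infty)$, parabolic estimates on the straightened domain give precompactness of $u(t,\cdot)$. The $\omega$-limit set consists of solutions whose $h$ is constant, so $u_x=0$ at the free boundary, and by the zero-number/Matano argument these are stationary. Step 1 then forces $h_\infty=l_*$ and $u\to w_*$, which is case (ii); when $\delta_0\ge\delta$ this case is impossible. If $h_\infty=\infty$, I would compare on fixed intervals $[0,L]$ with the Dirichlet problem. Near $x=0$, $u$ is sandwiched by sub- and supersolutions built from $v$ in \eqref{1.3} and from the semi-wave $q_*$ of \eqref{semi1}, used as barriers near the front as in Theorem A. Together with uniqueness for \eqref{1.3}, this gives $u\to v$ locally uniformly. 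If $h$ decreases to $0$, this is finite-time vanishing, case (i); I would show $T_{\max}<\infty$ using $h'\le -c<0$ once $h<l_*$, via a comparison of $u$ with a rescaled $w_*$ below. For part (a), I would construct a compactly supported subsolution lying below $u$ that itself expands. With $\delta_0\ge\delta$ one can first show $u(t,x)\ge\delta$-type lower bounds near the front, giving $h'\ge 0$ eventually, and then unboundedness follows from Step 1.

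The main obstacle is Step 2, namely proving that $h$ is eventually monotone, i.e.\ excluding oscillation of $h$ without order preservation. The first approach I would try is the zero-number argument for $u_t$ and for $u-w_*$ sketched above. The difficulty is handling the moving right endpoint, where $u_t$ is nonnegative but may vanish exactly when $h'=0$. If that fails, I would look for a Lyapunov functional of the form $\int_0^{h}\big(\frac d2u_x^2-F(u)+F(\delta)\big)dx$ corrected by a term in $h$. A direct computation shows the uncorrected version has time derivative $-\int u_t^2-\frac{\delta^2}{2d}h'^3$, which is not sign-definite, so such a correction would be essential.
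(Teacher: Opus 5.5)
Your plans for part (a) and for $u\to v$ once $h\to\infty$ are close to the paper's. Part (b), however, has genuine gaps.

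\textbf{Step 2 does not work.} The core of part (b) is that a global solution with $0<\liminf h\le\limsup h<\infty$ has $h\to l_*$. This rests entirely on your Step 2, and the proposed mechanism cannot give eventual monotonicity of $h$.

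For $\eta=u_t$, the boundary value $u_t(t,h(t))=\frac{\delta}{d}h'(t)^2$ vanishes at every turning point of $h$. So Angenent's lemma is not applicable precisely at the relevant times. More fundamentally, this boundary value is the same for an advancing and a retreating front, so the zero count of $u_t$ carries no information about the sign of $h'$.

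For $u-w_*$ on $[0,\min\{h(t),l_*\}]$, the endpoint value is $\delta-w_*(h(t))>0$ only while $h(t)<l_*$. When $h(t)\ge l_*$ it equals $u(t,l_*)-\delta$, whose sign is unknown. Your fallback energy has the term $-\frac{\delta^2}{2d}h'^3$ in its derivative, and no correction depending only on $h$ can cancel it.

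The paper never proves monotonicity of $h$. It proves $\limsup h\le l_*$ and $\liminf h\ge l_*$ separately (Lemmas \ref{lemma1.2} and \ref{liminf-h}). It uses slowly moving traveling-wave pieces $q_c$, obtained by perturbing the phase-plane orbit of $w_*$, with $l_c>l_*$ and $l_c\to l_*$ (Lemma \ref{lem-lc}).
\begin{itemize}
\item A left-moving piece is a lower solution whose right end ($q_c=\delta$, $q_c'=0$) cannot touch $x=h(t)$ without forcing $h'>0$ forever. By Lemma \ref{l8} this contradicts $l_c>l_*$, which gives the $\limsup$ bound.
\item A zero-number comparison with right-moving pieces gives the $\liminf$ bound. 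It uses $u>\delta_0$ and $u_x(t,0)>0$ (Corollary \ref{c3.2}) to fix the zero count at one. This shows that $h<l^c$ together with $h'<0$ at one large time forces $h'<0$ forever, again contradicting Lemma \ref{l8}.
\end{itemize}

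\textbf{The vanishing alternative is not handled.} Your claim that $h'\le -c<0$ once $h<l_*$ is false in general. With $h_0<l_*$ and $u_0$ exceeding $\delta$ inside, $u_0'(h_0)<0$ gives $h'(0)>0$. Moreover, a barrier lying below $u$ and equal to $\delta$ at $x=h(t)$ gives $u_x\le\underline u_x$ there, which is only a lower bound on $h'$.

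The statement of Theorem \ref{th1.1} also contains no blow-up alternative. You need a bound on $|h'|$ while $\inf h>0$ (Lemma \ref{le2.7}, delicate because the front may retreat) to get $T_{\max}<\infty\Rightarrow\liminf h=0$. You then still have to upgrade this to $h\to 0$, and to exclude $\liminf h=0$ when $T_{\max}=\infty$.

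The missing tool is $V(t)=\int_0^{h(t)}xu\,dx$, which satisfies $V'=d(\delta_0-\delta)+\int_0^{h}xf(u)\,dx\le d(\delta_0-\delta)+C_fV$. Once $V<d(\delta-\delta_0)/C_f$, it decreases at a definite rate and must reach $0$ in finite time (Lemma \ref{lemma2.10}). When $\delta_0\ge\delta$, the reverse bound $V'\ge -C_0V$ together with Lemma \ref{lemma2.8} gives $T_{\max}=\infty$ in part (a), which your sketch takes for granted.

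Finally, the semi-unbounded case $\liminf h<\limsup h=\infty$ needs its own argument (Lemma \ref{lemma3.4}). In your plan it is covered only by the unproven Step 2.
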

	
	It is easily seen that $(u(t,x), h(t))\equiv (w_*(x), l_*)$ is a stationary solution of \eqref{1.1}.
	
	Similar to \eqref{B}, the solution to problem \eqref{1.1}  does not have the usual order-preserving property \eqref{order}, which makes it particularly difficult to determine exactly when each of the three possibilities in part {\rm (b)} of Theorem \ref{th1.2b} occurs. Nevertheless, we have the following result.
	
	\begin{theorem}\label{th1.3} 
		In part {\rm (b)} of Theorem \ref{th1.2b}, the following conclusions hold.
		\begin{itemize}
			\item[{\rm (i)}] Conditions for vanishing:
			\[
			T_{\mathrm{max}} < \infty \quad \iff \quad 
			\int_{0}^{h(t_{0})} x u(t_{0}, x)  dx < \frac{d(\delta - \delta_{0})}{C_{f}}\quad\text{for some }  t_{0} \in [0, T_{\mathrm{max}}),
			\]
			where $\displaystyle C_{f} := \sup_{u \in (0,1]} \frac{f(u)}{u}$.
			\item[{\rm (ii)}] Sufficient conditions for vanishing and spreading: Let $l_*$ and $w_*$ be given in \eqref{l*} and \eqref{1.7}, respectively. If  $h_0=l_*$ and $u_0\in\mathcal X(h_0)$, then 
			\begin{equation*}
				\begin{cases}
					{u}_0(x)\geq w_*(x), {u}_0(x)\not\equiv w_*(x) \implies {\rm spreading\ occurs},\\
					{u}_0(x)\leq  w_*(x), {u}_0(x)\not\equiv w_*(x) \implies {\rm vanishing\ occurs}.
				\end{cases}
			\end{equation*}	
		\end{itemize}		
	\end{theorem}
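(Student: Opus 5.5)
For part (i) the natural object is the weighted mass $M(t):=\int_0^{h(t)}x\,u(t,x)\,dx$. I will differentiate it along the solution. I use $u(t,h(t))=\delta$ and $\int_0^h xu_{xx}\,dx=h u_x(t,h)-(\delta-\delta_0)$. The boundary term $h'(t)h(t)\delta=-d\,h\,u_x(t,h)$ then cancels exactly against $d\,h u_x(t,h)$. This leaves the identity
\[
M'(t)=-d(\delta-\delta_0)+\int_0^{h(t)}x f(u)\,dx .
\]
Next I use $f(u)\le C_f u$ for all $u\ge 0$. This holds for $u\in(0,1]$ by the definition of $C_f$, and for $u>1$ because there $f<0$. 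It gives the differential inequality $M'\le -d(\delta-\delta_0)+C_fM$. If $M(t_0)<d(\delta-\delta_0)/C_f$, then $M$ is strictly decreasing from $t_0$ on, with $M'\le -c<0$. Since $M>0$ as long as $h>0$ and $u>0$, the solution cannot exist beyond $t_0+M(t_0)/c$, so $T_{\max}<\infty$.

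For the converse in (i), suppose $T_{\max}<\infty$. Theorem \ref{th1.2b}(b) gives $h(t)\to0$ as $t\to T_{\max}^-$. I will also bound $u$ uniformly by comparison with the constant $\max\{1,\delta_0,\delta,\|u_0\|_\infty\}$; this works because $f<0$ above $1$ and the boundary values are $\delta_0$ and $\delta$. Then $M(t)\le \tfrac12 h(t)^2\|u\|_\infty\to0$, so the inequality holds at some $t_0$ close to $T_{\max}$.

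For part (ii) with $u_0\ge w_*$, I exploit a structural fact. From the energy identity $\tfrac d2 w_*'^2=\int_{w_*}^\delta f(s)\,ds>0$ on $[0,l_*)$ (using $\delta>\hat\theta_f$), $w_*$ is strictly increasing, so $\delta$ is its maximum. Extend $w_*$ past $l_*$ as the solution $W$ of $dW''+f(W)=0$. Since $W''(l_*)=-f(\delta)/d<0$, $W<\delta$ on a neighbourhood of $[0,L)$ away from $l_*$. Set $\eta=u-W$ on $0<x<h(t)$ (as long as $h(t)$ stays in the range of $W$). Then $\eta(t,0)=0$, and at the moving boundary $\eta(t,h(t))=\delta-W(h(t))\ge0$ automatically. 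Hence the maximum principle on the moving domain preserves $\eta\ge0$, and the strong maximum principle gives $\eta>0$ inside for $t>0$.

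Suppose $h(t_1)=l_*$ at some $t_1>0$. Then $\eta(t_1,l_*)=0$ is a boundary minimum, so Hopf gives $u_x(t_1,l_*)<W'(l_*)=0$ and $h'(t_1)>0$. Hence $h(t)>l_*$ for all $t>0$, which rules out vanishing. To exclude the transition state I plan a strict-separation argument. At $t=1$, $u(1,\cdot)>w_*$ on $[0,l_*]$ and $h(1)>l_*$. I would try to insert between them a member $W_s$ of the one-parameter family with $W_s(0)=\delta_0$ and $W_s'(0)=w_*'(0)+s$, $s>0$ small. The aim is to show that $u\ge W_s$ persists on a suitable subinterval, which keeps $u$ uniformly away from $w_*$ and contradicts $u\to w_*$. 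Alternatively, I would combine the identity for $M'$ with convergence to $w_*$ to get a contradiction.

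The case $u_0\le w_*$ is the \emph{main obstacle}. Here the boundary sign is wrong: if $h<l_*$ then $w_*(h)<\delta=u(t,h)$, so $u\le w_*$ cannot be propagated directly in physical variables. My first attempt would be a hodograph transformation. As long as $u_x>0$, write $x=X(t,u)$ on the \emph{fixed} interval $u\in[\delta_0,\delta]$. This gives the quasilinear equation $X_t=dX_{uu}/X_u^2-f(u)X_u$, with $X(t,\delta_0)=0$ and the free boundary law becoming $X_t(t,\delta)=-d/(\delta X_u(t,\delta))$. This is monotone in $X$, and the order $u\le w_*$ becomes $X\ge X_*$, where $X_*$ is the inverse of $w_*$. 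The comparison should then work symmetrically to the first case, and yield $h<l_*$ together with a strict decrease leading to the condition in (i). The difficulties are justifying $u_x>0$ (for general $u_0$ I would first use the zero-number of $u_x$ to get monotonicity after some time) and handling the singularity of $X_*$ at $u=\delta$, where $w_*'(l_*)=0$.
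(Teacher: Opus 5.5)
Your part (i) is correct and is the paper's argument (Lemma \ref{lemma2.10}, Steps 3--4). Part (ii), however, has genuine gaps in both halves.

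\textbf{Spreading half.} Continuing $w_*$ past $l_*$ by the ODE solution $W$ throws away the information you need. For $h(t)>l_*$ you have $\eta(t,h(t))=\delta-W(h(t))>0$, so Hopf gives $h'>0$ only at instants where $h=l_*$. At best you get $h(t)>l_*$, which does not exclude $h(t)\downarrow l_*$ with $u\to w_*$.

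Your fallbacks do not close this. The separating profile $W_s$ satisfies $W_s>\delta$ near $l_*$, so $u-W_s<0$ on the free boundary there, and $u\ge W_s$ cannot be propagated. The identity for $M'$ only gives $M'\to0$, which is consistent with the steady state.

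The missing idea is to extend $w_*$ by the constant $\delta$ on $[l_*,\infty)$. Because $w_*'(l_*)=0$ and $f(\delta)>0$, this $\tilde w$ is still a $C^1$ weak subsolution. Now $u-\tilde w$ vanishes on $x=h(t)$ at every time with $h(t)\ge l_*$. Take $u_0'(l_*)<0$ and let $t_0$ be the first zero of $h'$. Then $h\ge l_*$ on $[0,t_0]$, and Hopf gives $u_x(t_0,h(t_0))<0$, a contradiction. So $h$ is strictly increasing, $\lim h>l_*$, and Theorem \ref{th1.2b}(b) forces spreading.

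Your step ``hence $h>l_*$ for all $t>0$'' also needs $h'(0)>0$. If $u_0'(l_*)=0$, nothing in your argument prevents an initial dip below $l_*$. The paper handles this case by approximating $u_0$ with $u_0^n\ge w_*$, $(u_0^n)'(l_*)<0$, and using continuous dependence to get $h'\ge0$. It then excludes $h\equiv l_*$ on an initial interval by Hopf.

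\textbf{Vanishing half.} The hodograph route fails because the transformed free-boundary law is order-reversing. If $X_1\le X_2$ touch at $u=\delta$, then $X_{1,u}\ge X_{2,u}$ and $(X_2-X_1)_t=\frac d\delta\bigl(X_{1,u}^{-1}-X_{2,u}^{-1}\bigr)\le0$. This is just the non-order-preserving character of \eqref{1.1} again. Worse, $X\ge X_*$ at $u=\delta$ means $h\ge l_*$, the opposite of the conclusion you want, so the order you aim to propagate is incompatible with the outcome.

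The paper's device, for $u_0'(l_*)>0$, is as follows.
\begin{enumerate}
\item Let $T_0$ be the first time with $h'(T_0)=0$, so $h$ decreases on $[0,T_0]$.
\item For $T_1\le T_0$, take the steady state $q_Q$ with $q_Q(0)=\delta_0$ and maximum $Q\in(\delta,1)$. Translate it so that $\bar u(h(T_1))=\delta$ with $\bar u'>0$ there; this is possible since $l_Q^\delta\to l_*>h(T_1)$ as $Q\downarrow\delta$. Cap it by $Q$, and replace $f$ by $\tilde f\le f$ with $\tilde f=f$ on $[0,\delta]$ and $\tilde f(Q)=0$. Then $\bar u$ is a stationary supersolution.
\item Monotonicity of $h$ gives $\bar u(h(t))\ge\delta$ for $t\le T_1$. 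Also $\bar u(0)>\delta_0$, and a phase-plane comparison $P_Q>P_\delta$ gives $u_0\le w_*\le\bar u$.
\item First show $u<\delta$ inside on $[0,T_0]$, so that $u$ solves the $\tilde f$-equation. Then $u<\bar u$, and Hopf at $T_0$ gives $u_x(T_0,h(T_0))>\bar u'(h(T_0))>0$, i.e.\ $h'(T_0)<0$, a contradiction.
\end{enumerate}
Hence $h$ is strictly decreasing, and Proposition \ref{prop-h} yields finite-time vanishing. The case $u_0'(l_*)=0$ is again done by approximation. Without such a device your proposal does not prove the vanishing half.
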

	
	Let us observe that part (i) of Theorem \ref{th1.3} implies that if the initial population distribution $u_0\in\mathcal X(h_0)$ satisfies
	\[
	h_0 \int_0^{h_0}u_0(x)dx<\frac{d(\delta - \delta_{0})}{C_{f}},
	\]
	then vanishing must happen. 
	
	When spreading occurs, it is possible to obtain a precise description of the propagation profile as in Theorem A.
	More precisely, we can extend the techniques in \cite{Du2023a, DLNS} to show that
	if $f$ satisfies  ${\bf (f_m)}$, or ${\bf (f_b)}$, or ${\bf (f_c)}$, $\delta\in (\hat\theta_f,1)$,    and spreading happens to the solution  $(u,  h)$   of \eqref{1.1}, then 	\begin{equation}\label{precise}	\left\{\begin{array}{l}
			\lim _{t \rightarrow \infty}\left[h(t)-c_{*} t\right]=h^\infty, \,\lim _{t \rightarrow \infty} h^{\prime}(t)=c_{*}, \\
			\lim _{t \rightarrow \infty} \sup _{x \in[0, h(t)]}\left|u(t, x)-v(x)q_{*}\left(h(t)-x\right)\right|=0, \\
		\end{array}\right.\end{equation}
	where $\left(c_{*}, q_{*}\right) $ is the unique solution pair of \eqref{semi1}, $v(x)$ is the solution to \eqref{1.3}, and  $h^\infty$ is some constant depending on $u_0$.
	
	However, in view of the length of the current paper, we have refrained from including this part here. Instead, we will prove \eqref{precise}
	in a forthcoming paper, together with an analysis of \eqref{1.1} for the case $\delta\geq 1$.
	
	\subsection{Fundamental differences of \eqref{1.1} from  \eqref{A} and \eqref{B}} Let us mention several fundamental differences of the model \eqref{1.1} from \eqref{A} and \eqref{B} in the biologically interesting situation described by Theorem  \ref{th1.2b} part (b).

	Firstly the solution of \eqref{A} and \eqref{B} with any admissible initial data always exists for all $t>0$, while the maximal existence time $T_{max}$ of \eqref{1.1} may be a finite number.
	
	Secondly, although \eqref{A} is capable to exhibit a trichotomy for its long-time dynamics (in the case of ${\bf (f_b)}$ or ${\bf (f_c)}$), the trichotomy in Theorem  \ref{th1.2b} part (b) has some fundamental differences:  vanishing for \eqref{1.1} always happens  in finite time ($T_{max}<\infty$), and it is caused by the population range shrinking to one point at $t=T_{max}$; in contrast, the vanishing of \eqref{A} always happens at infinity time, with the population range converging to a finite interval and the population density converging to 0.
	
	Thirdly, due to the order preserving property enjoyed by \eqref{A}, the trichotomy for \eqref{A} can be explained relatively easily by parameterizing the initial function $u_0(x)$ as in \eqref{u_sigma}, which gives a critical value $\sigma^*$ of the parameter $\sigma$ so that successful spreading happens when $\sigma>\sigma^*$, vanishing happens when $\sigma<\sigma^*$, and the transition case happens exactly when $\sigma=\sigma^*$. The lack of the order-preserving property of \eqref{1.1} renders such a simple description of the trichotomy  impossible.
	
	\subsection{Organization of the paper}

	The remainder of this paper is organized as follows. In Section 2, we prove Theorem \ref{th1.1}, where we first prove the local existence and uniqueness in Subsection 2.1, by adapting the approaches of \cite{Du2023a} and \cite{DLNS} to the current situation. The proof of the maximal existence and uniqueness result, completed in Subsection 2.3, relies on the preparations in Subsection 2.2, which include  some crucial bounds for \( h'(t) \), where the following  functional is first used:
	\begin{equation}\label{V}
		V(t) := \int_{0}^{h(t)} x u(t, x)  dx.
	\end{equation} 
	Several proofs in this section are postponed to Section 4, because they are based on existing ideas and the techniques are not used in later sections of this paper.
	
	Section 3 is the main part of this paper, where Theorems  \ref{th1.2b} and \ref{th1.3} are proved. Here, apart from the  functional $V(t)$ in \eqref{V}, our analysis relies on several other new ingredients. The first is the stationary solution $(w_*(x), l_*)$ of \eqref{1.1}, which is  given by
	\begin{equation*}
		\begin{cases}		\displaystyle l_* = \int_{\delta_{0}}^{\delta} \frac{dx}{\sqrt{\frac{2}{d}\int_{x}^{\delta} f(s)}ds}  \in (0, \infty),\\
			\displaystyle \int_{w_*(x)}^\delta \frac{du}{\sqrt{\frac{2}{d}\int_{u}^{\delta} f(s) \, ds}}du =l_*-x \mbox{ for } x\in [0, l_*],
		\end{cases}
	\end{equation*}
	and already appeared in Theorem \ref{th1.2b}.
	
	The second ingredient is a family of ``slowly moving traveling wave pieces" obtained in the following way:
	Using the trajectory generated by $w_*(x)$\footnote{More precisely the symmetric extension of $w_*(x)$ to $[0, 2l_*]$.}  in the $q$-$p$ plane through $ q(x):=w_*(x),\ p(x):=w_*'(x)$,
	for each {small $c>0$} we obtain a perturbed trajectory, which gives a  triple $(l_c, l^c,  q_c(x))$ that solves the following equations:
	\begin{equation*}\label{0q1}
		\left\{
		\begin{array}{ll}
			d  q'' - c  q' + f( q) = 0, & x \in (-l_c,l^c), \\
			q'(x) > 0, & x \in [-l_c, 0),\\
			q'(x) < 0, & x \in (0, l^c],\\
			q( -l_c)= q(l^c) = \delta_0, \quad  q(0) = \delta, \quad  q'(0)=0.
		\end{array}
		\right.
	\end{equation*}
	For each  \(L\le -l_{c}\), $
	v_{L}(t,x):
	=  q_{c}(-x+ct+L)$ satisfies
	\[
	\partial_tv_{L}=d\partial_{xx}v_{L}+f(v_{L}) \mbox{ for } \ t\in\mathbb R,\	ct+L-l^{c}< x< ct+L+l_{c}.
	\]
	So $v_{L}(t,x)$ is a family (with parameter $L$) of traveling wave pieces of speed $c$, whose
	left boundary, peak location and right boundary are given respectively by
	{\small \[
		G(t):=ct+L-l^{c},\qquad
		I(t):=ct+L,\qquad
		H(t):=ct+L+l_{c},
		\]}
		and so
		\[
		v_L(t, G(t))=v_L(t, H(t))=\delta_0,
		\ v_L(t, I(t))=\delta \mbox{ for } t\in \mathbb R.
		\]

	We will make use of the traveling wave pieces
	$v_L(t,x)$  to design several subtle settings where the maximum principle and a zero number result  of Angenent \cite{Angenent} can be combined to
	deduce various desired conclusions leading to the completion of the proof of Theorems \ref{th1.2b} and  \ref{th1.3}. Subsection 3.2 is perhaps the most  challenging part of this paper, where it is shown that if $T_{max}=\infty$ and $h(t)\in [a,b]$ for some $0<a<b<\infty$ and all $t>0$, then $h(t)\to l_*$ as $t\to\infty$.

	\section{Existence and uniqueness}  
	\subsection{Local existence and uniqueness}
	In this subsection, we prove the following local existence and uniqueness result, using strategies similar to those in \cite{Du2023a} and \cite{DLNS}. Note that here we allow more general $f(u)$ and only require $\delta>0$, $\delta_0\geq 0$.
	\begin{theorem}\label{th2.1}
		Assume that $f$ is $C^{1}$ and satisfies $f(0) = 0$. Then, for any given $u_{0} \in \mathcal{X}(h_{0})$ and $\alpha \in (0, 1)$, there exists a constant $T > 0$ such that problem \eqref{1.1} admits a unique solution
		\[
		(u, h) \in C^{\frac{1+\alpha}{2},1+\alpha}\left(\bar{\Omega}_{T}\right) \times C^{1+\frac{\alpha}{2}}\left([0, T]\right),
		\]
		where $\Omega_{T} = \left\{(t, x) \in \mathbb{R}^{2} : t \in (0, T],  x \in [0, h(t)]\right\}$. Moreover,  
		\[\begin{cases}
		\Vert u \Vert_{C^{\frac{1+\alpha}{2},1+\alpha}\left(\bar{\Omega}_{T}\right)} + \Vert h \Vert_{C^{1+\frac{\alpha}{2}}\left([0, T]\right)} \leq C,
		\\
		h \in C^{1+\frac{1+\alpha}{2}}\left((0, T]\right), \quad u \in C^{1+\frac{\alpha}{2},2+\alpha}\left(\Omega_{T}\right), \quad u > 0 \text{ in } \Omega_{T},
		\end{cases}
		\]
		where the constants $C$ and $T$ depend only on $h_{0}$, $\alpha$ and $\Vert u_{0} \Vert_{C^{2}\left([0,h_{0}]\right)}$.
	\end{theorem}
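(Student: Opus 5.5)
We follow the standard Chen--Friedman / Du--Lin strategy used in \cite{Du2023a, DLNS}: straighten the free boundary, set up a contraction mapping in a space of pairs $(u,h)$, and use $L^p$ and Schauder estimates. Define $y=x/h(t)$. Then $w(t,y):=u(t,h(t)y)$ solves on the fixed domain $[0,1]$
\[
w_t-\frac{d}{h^2(t)}w_{yy}-\frac{h'(t)y}{h(t)}w_y=f(w),\quad w(t,0)=\delta_0,\ w(t,1)=\delta,\quad w(0,y)=u_0(h_0y),
\]
together with $h'(t)=-\frac{d}{\delta h(t)}w_y(t,1)$. Because $u_0(0)=\delta_0$ and $u_0(h_0)=\delta$, the compatibility conditions of order zero hold. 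This is all that is needed for $W^{1,2}_p$ theory.

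For small $T$ we work in the closed set
\[
\Sigma_T=\Big\{(w,h): w\in C([0,T]\times[0,1]),\ w(0,\cdot)=w_0,\ \|w-w_0\|_\infty\le1;\ h\in C^1([0,T]),\ h(0)=h_0,\ h'(0)=h_1,\ \|h'-h_1\|_\infty\le1\Big\},
\]
where $h_1=-\frac d\delta u_0'(h_0)$. Taking $T$ small forces $h(t)\in[h_0/2,2h_0]$, so the equation is uniformly parabolic. Given $(w,h)\in\Sigma_T$, we do the following.
\begin{itemize}
\item Solve the linear problem with coefficients frozen at $h$ and right-hand side $f(w)$. This gives $\tilde w$.
\item The $W^{1,2}_p$ estimate ($p>3$) and Sobolev embedding give $\|\tilde w\|_{C^{\frac{1+\alpha}{2},1+\alpha}}\le C_1$, with $C_1$ depending only on $h_0$, $\alpha$ and $\|u_0\|_{C^2}$.
\item Define $\tilde h(t)=h_0-\frac d\delta\int_0^t\frac{\tilde w_y(s,1)}{h(s)}ds$. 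Then $\tilde h'\in C^{\alpha/2}$ with norm at most $C_2$.
\end{itemize}
Consequently $\|\tilde h'-h_1\|_\infty\le C_2T^{\alpha/2}$ and $\|\tilde w-w_0\|_\infty\le C_1T^{(1+\alpha)/2}$, so the map $\mathcal F(w,h)=(\tilde w,\tilde h)$ sends $\Sigma_T$ into itself for small $T$.

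For the contraction, take two pairs $(w_i,h_i)\in\Sigma_T$ and consider $W=\tilde w_1-\tilde w_2$. It satisfies a linear equation with homogeneous boundary data and a source controlled by $|h_1-h_2|+|h_1'-h_2'|+|w_1-w_2|$ times bounds on $\tilde w_2$. This uses $f\in C^1$ and the uniform bounds on $(\tilde w_2)_y$ and $(\tilde w_2)_{yy}$ in $L^p$. Applying $W^{1,2}_p$ again gives
\[
\|W\|_{C^{\frac{1+\alpha}{2},1+\alpha}}\le C\big(\|w_1-w_2\|_\infty+\|h_1-h_2\|_{C^1}\big).
\]
The time-integral formula for $\tilde h$ then yields an extra factor $T^{\alpha/2}$ or $T^{(1+\alpha)/2}$. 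Hence $\mathcal F$ is a contraction for $T$ small. Its unique fixed point gives existence, and because every solution with the stated regularity is a fixed point, it also gives uniqueness. The bound by $C$ comes directly from $C_1$ and $C_2$.

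Higher regularity is then routine. Once $h\in C^{1+\alpha/2}$, the coefficients are Hölder, so interior-in-time Schauder estimates on $[\epsilon,T]\times[0,1]$ give $w\in C^{1+\frac\alpha2,2+\alpha}$. Differentiating $h'=-\frac d\delta u_x(t,h(t))$ then gives $h\in C^{1+\frac{1+\alpha}{2}}((0,T])$. For positivity, $u_0>0$ on $(0,h_0]$ and $u(t,h(t))=\delta>0$. Write $f(u)=c(t,x)u$ with $c$ bounded, which is possible since $f(0)=0$. If $\delta_0>0$, the strong maximum principle gives $u>0$ on the whole closed region. If $\delta_0=0$, it gives $u>0$ for $0<x\le h(t)$, which is the intended meaning of $u>0$ in $\Omega_T$.

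The main obstacle is the contraction estimate. The drift coefficient $h'y/h$ depends on $h'$, so differences of the drift must be absorbed using only the $C^1$ control of $h$ and the $L^p$ bound on $(\tilde w_2)_y$. The constants must also be independent of $T$ so that the small power of $T$ can close the argument. Unlike \cite{Du2023a}, $h'$ can have either sign here, but this plays no role locally because $h$ stays near $h_0>0$ for short time.
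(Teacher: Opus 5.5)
Your proposal is correct and follows essentially the same route as the paper. You straighten the boundary via $y=x/h(t)$, run a contraction on pairs $(U,h)$ using $W^{1,2}_p$ estimates and Sobolev embedding with smallness from $T^{\frac{1+\alpha}{2}}$ and $T^{\frac{\alpha}{2}}$, and finish with cutoff-in-time Schauder estimates. You flag the $T$-independence of the constants but give no mechanism; the paper handles it by trivially extending each $(U,h)$ from $[0,T]$ to a fixed $[0,T_1]$ (constant in $t$ on $[T,T_1]$), so all $L^p$ and embedding constants are taken on the fixed domain $[0,T_1]\times[0,1]$.
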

	The  proof of this theorem is given in Section 4.
	
	\subsection{A prior bounds} In this subsection, we obtain some a priori bounds to guarantee that the local solution $(u, h)$ obtained in Theorem \ref{th2.1} can be extended until some maximal time $T_{max}$. As we will see below, these rely on the construction of delicate upper and lower solutions, and on various estimates obtained through the auxiliary function $V(t)$ given in \eqref{V}.
	
	Recall that we will say  $f$ satisfies ${\bf (F)}$ if it is a $C^1$ function satisfying ${\bf (f_m)}$, ${\bf (f_b)}$ or ${\bf (f_c)}$.	
	We first show that the density function $u(t,x)$ remains positive and bounded  as long as it exists.
	\begin{lemma}\label{le2.5}
		Suppose that $(\bf{F})$ holds, $(u,h)$ is a solution to \eqref{1.1} defined for $t\in[0,T)$ for some $T\in (0,\infty)$. Then 		\[0<u(t,x)\leq C_{1}:=\max _{x \in\left[0, h_{0}\right]} u_{0} (x)+1  \quad\text{for } t\in[0,T) \text{ and } x\in(0,h(t)].\] 
		Moreover,	if $l:=\inf_{[0,T)}h(t)>0$, then there exists $C^*(T)>0$ depending on $T$ such that 
		\[u(t,x)\geq C^*(T)\quad\text{for } t\in[0,T) \text{ and } x\in[l/2,h(t)].\]
	\end{lemma}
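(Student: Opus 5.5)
Positivity and the upper bound come from the comparison principle on the moving domain $\{(t,x):0\le t<T,\ 0\le x\le h(t)\}$; since $h$ is already given and $C^1$, the free boundary condition plays no role here. For the upper bound we use $\bar u\equiv C_1$. Under $({\bf F})$ we have $f(u)<0$ for $u>1$. We also have $C_1\ge \max u_0+1>1$, because $u_0(h_0)=\delta$ and $\max u_0\ge\delta>0$. Therefore $\bar u$ is a supersolution: $\bar u_t-d\bar u_{xx}-f(\bar u)=-f(C_1)\ge0$. On the boundary $\bar u\ge u$, because $u(t,0)=\delta_0$ and $u(t,h(t))=\delta$, and $\delta<1<C_1$. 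The bound $\delta_0\le C_1$ needs a short separate check: $u_0(0)=\delta_0$ forces $\max u_0\ge\delta_0$, so $C_1\ge\delta_0+1$. At $t=0$ we have $\bar u\ge u_0$. The parabolic maximum principle applied to $w=\bar u-u$, written as $w_t-dw_{xx}=c(t,x)w+(\text{nonnegative term})$ with $c$ bounded since $f\in C^1$, then gives $u\le C_1$.

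For positivity, $\underline u\equiv0$ is a solution because $f(0)=0$. It lies below $u$ on the parabolic boundary, since $\delta_0\ge0$, $\delta>0$ and $u_0\ge0$. So $u\ge0$ by the same comparison. The strong maximum principle then gives $u>0$ in the interior $0<x<h(t)$, $t>0$: the alternative $u\equiv0$ is ruled out by $u=\delta>0$ at $x=h(t)$. At $x=h(t)$ we have $u=\delta>0$ directly, and at $t=0$ positivity on $(0,h_0]$ is part of the assumption $u_0\in\mathcal X(h_0)$.

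For the lower bound on $[l/2,h(t)]$ we will build a subsolution that is uniform up to time $T$. Write $K:=\sup_{0\le s\le C_1}|f(s)|/s$, which is finite because $f\in C^1$ and $f(0)=0$. Set $z(t,x)=e^{Kt}u(t,x)$. Then $z_t-dz_{xx}=(K+f(u)/u)z\ge0$, so $z$ is a supersolution of the heat equation with $z\ge0$. Fix a smooth cutoff and use a heat-equation subsolution $\psi$ on a fixed domain. The simplest choice is to compare on $\{l/2\le x\le h(t)\}$ with $\psi(t,x)$ solving $\psi_t=d\psi_{xx}$ on $(l/4,\infty)$, where $\psi(t,l/4)=0$ and $\psi(0,\cdot)=\epsilon\chi$. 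However, the left boundary $x=l/2$ carries no known data, so the comparison is better done on all of $\{0\le x\le h(t)\}$. There $z\ge0$ at $x=0$ and $z=e^{Kt}\delta\ge\delta$ at $x=h(t)\ge l$.

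We therefore take $\psi(t,x)=m\,\phi(x)$, a time-independent function with $\phi$ convex and increasing on $[0,\infty)$, $\phi(0)\le 0$, $\phi\ge$ a positive constant on $[l/2,\infty)$, and $\phi\le1$ wherever $x\le h(t)$ may reach. A linear function works: $\phi(x)=(x-l/4)/B$ with $B\ge \sup_{[0,T)}h$. The quantity $\sup_{[0,T)}h$ is finite because $h$ is continuous on $[0,T]$ by the local theory, or simply since $T<\infty$ and $h\in C^1$ on the existence interval; if only $[0,T)$ is available we will use a bound on $h'$. Then $\psi_t-d\psi_{xx}=0$, and $\psi\le0\le z$ at $x=0$. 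At $x=h(t)$ we have $\psi\le m\le\delta\le z$ provided $m\le\delta$. At $t=0$ we need $m\phi\le u_0$ on $[l/4,h_0]$, which holds for small $m$ because $u_0>0$ on the compact set $[l/4,h_0]$. Comparison gives $z\ge\psi$, hence $u\ge e^{-KT}m\,(l/4)/B=:C^*(T)$ on $[l/2,h(t)]$.

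The main obstacle is the comparison on a domain with a moving right boundary $h(t)$ that may move inward. Standard maximum principles still apply on the noncylindrical domain because $h$ is continuous and the boundary data are compared pointwise, but we must make sure $h$ stays bounded on $[0,T)$ so that $B$ exists. If this is not available a priori, we will use the fact that $x\le h(t)$ is only needed where $\phi\le1$, and cap $\phi$ by taking $\min(\phi,1)$. That minimum of two supersolution-compatible subsolutions is still a subsolution, since the minimum of two subsolutions of the heat equation is not in general a subsolution. The cleanest fix is to take $\phi=1-e^{-\lambda(x-l/4)}$, which is concave and therefore a subsolution only when $\lambda$ is controlled; we will instead absorb this using the factor $e^{Kt}$ adjusted to $e^{(K+d\lambda^2)t}$ applied to $\phi=e^{\lambda x}$-type profiles. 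We expect this adjustment to need some care.
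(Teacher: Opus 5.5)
Your positivity and upper-bound arguments are fine. You compare directly with the constants $0$ and $C_1$, where the paper uses auxiliary parabolic problems; the idea is the same. One small slip: this lemma does not assume $\delta<1$, since Theorem~\ref{th1.1} allows any $\delta>0$. However, $C_1\ge u_0(h_0)+1=\delta+1$ handles the right boundary, exactly as your check does for $\delta_0$.

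\textbf{The gap is in the lower bound.} Your linear barrier $\phi(x)=(x-l/4)/B$ needs $B\ge\sup_{[0,T)}h$, and the hypotheses do not give this. The fact that $h\in C^1([0,T))$ does not make $h$ bounded on $[0,T)$. Appealing to a bound on $h'$ is circular: the only such bound, Lemma~\ref{le2.7}, is built from $C^*(T)$ of the present lemma. Moreover, Lemma~\ref{lemma2.8} derives $h(t)\le C_2t+h_0$ on $[0,T_{\max})$ precisely from Lemmas~\ref{le2.5} and~\ref{le2.7}, so this lemma must hold with no a priori upper bound on $h$. None of your remedies repairs this:
\begin{itemize}
\item $\min(\phi,1)$ has a concave corner, so it is not a subsolution, as you yourself note.
\item $1-e^{-\lambda(x-l/4)}$ is concave, so $-d\phi''>0$. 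A factor $e^{-\mu t}$ cannot compensate near the zero of $\phi$, where $\mu\phi\to0$ while $d\lambda^2e^{-\lambda(x-l/4)}\to d\lambda^2$.
\item Profiles of the form $e^{\lambda x}$ are unbounded, which brings back the same problem at $x=h(t)$.
\end{itemize}

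\textbf{What is missing is a bounded increasing subsolution, which must decay in time.} For example, set $\phi=0$ on $[0,l/4]$, $\phi(x)=\sin\bigl(\tfrac{2\pi}{l}(x-\tfrac l4)\bigr)$ on $[l/4,l/2]$, $\phi=1$ on $[l/2,\infty)$, and $\psi=me^{-\mu t}\phi$ with $\mu=d(2\pi/l)^2$. Then:
\begin{itemize}
\item The junction at $l/2$ is $C^1$ and the one at $l/4$ is a convex corner, so $\psi_t-d\psi_{xx}\le0$ weakly.
\item At $x=h(t)$ you have $\psi\le m\le\delta\le z$, however large $h(t)$ is; at $x=0$, $\psi=0$.
\item Taking $m\le\min\{\delta,\min_{[l/4,h_0]}u_0\}$ handles $t=0$.
\end{itemize}
Comparison with your $z=e^{Kt}u$ then gives $u\ge me^{-(K+\mu)T}$ on $[l/2,h(t)]$.

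The paper obtains the same bound in two steps, neither of which uses an upper bound on $h$. First, it compares $u$ with the zero-Dirichlet solution on the fixed interval $[0,l]$, which lies inside $[0,h(t)]$ because $h\ge l$; this gives $u(t,l/2)\ge m^*(T)>0$. Second, on $[l/2,h(t)]$ it compares $u$ with the spatially constant solution of $\hat v'=-L\hat v$, whose value never exceeds $\delta$ at the moving boundary.
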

		
	The proof of this result is based on some simple comparison arguments, and is given in Section 4.
	
	\begin{lemma}\label{th2.6}  	
		Suppose $(\bf{F})$ holds, $(u, h)$ solves \eqref{1.1} for $t\in [0,T)$ with some $T \in (0,\infty]$, and  $V(t)$ is defined by \eqref{V}.
		\begin{itemize}
			\item[{\rm (i)}] If $\delta_{0} \geq \delta$, then 
			\[
			h(t) \geq \sqrt{\frac{2V(0)}{C_{1}}} e^{-\frac{C_{0}t}{2} }\quad \text{for } t \in [0,T),
			\]
			where $C_{0} := \sup\limits_{u \in (0,\, C_1]} \frac{|f(u)|}{u}$, and $C_1$ is given in Lemma \ref{le2.5}.
			
			\item[{\rm (ii)}] If $0 \leq \delta_{0} < \delta$ and there exists $t_{0} \in [0,T)$ such that $V(t_{0}) < d(\delta - \delta_{0})/C_{f}$, then 
			\[
			V'(t) < 0  \text{ and }  V(t) < \frac{d(\delta - \delta_{0})}{C_{f}} \quad \text{for all } t \in [ t_{0}, T),
			\]
			where $C_{f}:= \sup_{u \in (0,1]} \frac{f(u)}{u}$.
		\end{itemize}		
	\end{lemma}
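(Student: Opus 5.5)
Both parts rest on one identity for $V'(t)$, obtained by differentiating $V(t)=\int_0^{h(t)}xu\,dx$ along the moving domain. Using $u(t,h(t))=\delta$, $u_t=du_{xx}+f(u)$, and integrating by parts twice, we have
\[
V'(t)=h'(t)h(t)\delta+d\big[xu_x\big]_0^{h(t)}-d\big[u\big]_0^{h(t)}+\int_0^{h(t)}xf(u)\,dx .
\]
The free boundary condition gives $h'\delta=-du_x(t,h(t))$, so the term $h h'\delta$ cancels $d\,h\,u_x(t,h)$, and the boundary term at $x=0$ of $xu_x$ vanishes. Hence
\[
V'(t)=d(\delta_0-\delta)+\int_0^{h(t)}xf(u(t,x))\,dx .
\]
This requires enough regularity of $u$ up to the boundary, which is supplied by Theorem \ref{th2.1} for $t>0$; at $t=t_0=0$ we argue by continuity.

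\textbf{Part (i).} Here $\delta_0\ge\delta$, so $d(\delta_0-\delta)\ge 0$. Since $0<u\le C_1$ by Lemma \ref{le2.5}, we have $xf(u)\ge -C_0xu$, and therefore $V'(t)\ge -C_0V(t)$. Gronwall gives $V(t)\ge V(0)e^{-C_0t}$. On the other hand, $V(t)\le C_1\int_0^{h(t)}x\,dx=C_1h(t)^2/2$. Combining the two bounds, $h(t)\ge\sqrt{2V(0)/C_1}\,e^{-C_0t/2}$.

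\textbf{Part (ii).} The key point is a bound on $\int xf(u)$ by $C_fV$ that remains valid even where $u>1$. By $(\bf F)$, $f(u)<0$ for $u>1$, so $f(u)\le C_f u$ there; for $u\in(0,1]$ the same inequality holds by definition of $C_f$. Note $C_f\in(0,\infty)$ in all three cases, since $f>0$ somewhere in $(0,1)$ and $f\in C^1$ with $f(0)=0$. Thus $xf(u)\le C_fxu$ for all $x\in[0,h(t)]$, which gives
\[
V'(t)\le d(\delta_0-\delta)+C_fV(t).
\]
Now run a continuity argument. At $t_0$ we have $V(t_0)<d(\delta-\delta_0)/C_f$, so $V'(t_0)<0$. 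Let
\[
t_1=\sup\{s\in[t_0,T):V<d(\delta-\delta_0)/C_f \text{ on } [t_0,s]\}.
\]
On $[t_0,t_1)$ we get $V'<0$, so $V$ is decreasing and stays at most $V(t_0)$, strictly below the threshold. If $t_1<T$, continuity would force $V(t_1)\le V(t_0)$, contradicting the equality $V(t_1)=d(\delta-\delta_0)/C_f$ that must hold at $t_1$. Hence $t_1=T$, and both claims follow on $[t_0,T)$.

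The only delicate step is justifying the differentiation of $V$ near $t=0$ and the continuity of $V'$. For $t>0$ the regularity from Theorem \ref{th2.1} is sufficient, and at $t_0=0$ one can use a one-sided derivative, or shift to a slightly later time by continuity of $V$.
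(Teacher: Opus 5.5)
Your proposal is correct and follows the paper's proof essentially step by step. Both derive the identity $V'(t)=d(\delta_0-\delta)+\int_0^{h(t)}xf(u)\,dx$. For (i), both use $|f(u)|\le C_0u$ with Gronwall together with $V\le C_1h^2/2$. For (ii), both use $f(u)\le C_fu$ for all $u>0$ (i.e.\ $\sup_{(0,1]}f(u)/u=\sup_{u>0}f(u)/u$) with a continuity argument. Your explicit sup-argument and the remark on regularity of $V$ at $t=0$ are harmless refinements of steps the paper passes over as ``follows easily.''
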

	\begin{proof}
		By Lemma \ref{le2.5}, we have 
		\[
		0 < u(t,x) \leq C_{1} \quad \text{for } t \in [0, T) \text{ and } x \in (0,h(t)].
		\]
		It then follows from the definition of $C_0$ that
		\begin{equation}\label{C0}
			|f(u(t,x))| \leq C_{0}u(t,x) \quad \text{for } t \in [0,T) \text{ and } x \in [0,h(t)].
		\end{equation}
		By the definition of $V(t)$, it is clear that $V(t)>0$ as long as $h(t)>0$.
		Direct computation yields
		\[\begin{aligned}
			V^{\prime}(t) & =h^{\prime}(t)h(t)u(t, h(t))+\int_{0}^{h(t)}x u_{t}(t, x) d x \\
			& =-dh(t)u_{x}(t,h(t)) +\int_{0}^{h(t)}x\left[d u_{x x}+f(u)\right] d x \\
			& =-dh(t)u_{x}(t,h(t))+d \int_{0}^{h(t)}x du_{x} +\int_{0}^{h(t)}xf(u)dx\\
			& =-du(t,h(t))+du(t,0)+\int_{0}^{h(t)}xf(u)dx\\
			& =d(\delta_{0}-\delta)+\int_{0}^{h(t)}xf(u)dx.
		\end{aligned}
		\]
		
		If $\delta_{0} \geq \delta$, then we obtain
		\[
		V^{\prime}(t) \geq \int_{0}^{h(t)} xf(u)  dx \geq -C_{0} \int_{0}^{h(t)} xu  dx = -C_{0} V(t),
		\]
		and hence
		\[
		V(t) \geq e^{-C_{0}t} V(0) > 0,
		\]
		which implies $h(t) > 0$ for $t \in [0,T)$. Furthermore, the inequalities
		\[
		e^{-C_{0}t} V(0) \leq V(t) = \int_{0}^{h(t)} xu  dx \leq \frac{C_{1}}{2} h^{2}(t)
		\]
		lead to
		\[
		h(t) \geq \sqrt{\frac{2V(0)}{C_{1}}} e^{-\frac{C_{0}t}{2}} \quad \text{for } t \in [0,T).
		\]
		
		If $0 \leq \delta_{0} < \delta$, then we have
		\begin{equation}\label{V'}
			V^{\prime}(t) = d(\delta_{0} - \delta) + \int_{0}^{h(t)} xf(u)  dx \leq d(\delta_{0} - \delta) + C_{f} V(t),
		\end{equation}
		where $C_{f}= \sup_{u \in (0,1]} \frac{f(u)}{u}= \sup_{u >0} \frac{f(u)}{u}$. Since $V(t_{0}) < d(\delta - \delta_{0})/C_{f}$ for some $t_{0} \in [0,T)$, we have $V'(t_{0}) < 0$. It then follows easily from \eqref{V'} that $V'(t) < 0$ for all $t \in [ t_{0}, T)$, and thus
		\[
		V(t) \leq V(t_{0}) < \frac{d(\delta - \delta_{0})}{C_{f}} \quad \text{for } t \in [ t_{0}, T).
		\]
		The proof is now complete.		
	\end{proof}

	The next lemma shows that $h'(t)$ is bounded  as long as it exists.
	Since the free boundary may move backward, the existing method to bound the free boundaries in \cite{DL} and similar works do not work anymore.  	\begin{lemma}\label{le2.7}
		Suppose that $(\bf{F})$ holds, $(u(t,x),h(t))$ is a solution to \eqref{1.1} defined on $t\in [0,T)$ for some $T\in (0,\infty]$, and $\inf_{t\in[0,T)}h(t)=:l>0$. Then there exists $C_{2}(T) >0$ dependent on $T$, such that
	\[	|h'(t)|\leq C_{2}(T)\ \mbox{ for }\ t\in[0, T).\]
	
	\end{lemma}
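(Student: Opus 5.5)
Since $h'(t)=-\frac d\delta u_x(t,h(t))$, the task is to bound $|u_x(t,h(t))|$ on $[0,T)$; here we take $T<\infty$, since $C_2(T)$ may depend on $T$. The natural route is a barrier argument near the free boundary, as in the Stefan-type problems, but one-sided barriers no longer suffice because $h$ may retreat. So we bound $u_x(t,h(t))$ from above and from below separately, using local barriers in a moving neighbourhood of $x=h(t)$. The available bounds are $0<u\le C_1$ from Lemma~\ref{le2.5}, the lower bound $u\ge C^*(T)$ on $[l/2,h(t)]$ from the same lemma, and $h(t)\ge l>0$.

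\emph{Upper bound for $h'$ (lower bound for $u_x$ at $h$).} Work on the region $\{(t,x): h(t)-\varepsilon<x<h(t)\}$ with $\varepsilon\le l/2$. Comparing with the constant barriers $C_1$ and $C^*(T)$ alone does not control the slope. Instead, construct a lower barrier of the form
\[
\underline w(t,x)=\delta-M\bigl(h(t)-x\bigr)+K\bigl(h(t)-x\bigr)^2 .
\]
This choice is awkward, because $\underline w$ depends on $h'$, which is what we are trying to bound. A cleaner device is to argue by contradiction at a first time $t_1$ where $|h'(t_1)|=\Lambda$ with $\Lambda$ large. On $[0,t_1]$ we then know $|h'|\le\Lambda$. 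Change variables to $y=h(t)-x$, which turns the problem into
\[
u_t=du_{yy}+h'u_y+f \quad\text{on } 0<y<l/2,\qquad u=\delta \text{ at } y=0,
\]
with $C^*(T)\le u\le C_1$ at $y=l/2$.

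\emph{Bounding $u_y(t_1,0)$.} Compare $u$ with the function $\delta\pm A\,\phi(y)$, where $\phi$ solves the stationary problem $d\phi''-\Lambda|\phi'|=-c$ with $\phi(0)=0$. This gives $|u_y(t_1,0)|\lesssim \Lambda$-dependent terms, which is circular. To break the circularity, replace this comparison with an exact identity.

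\emph{Key identity.} Integrate the equation over $(0,h(t))$ and over $(x_0,h(t))$. For $x_0\in[h(t)-l/2,h(t)]$,
\[
\frac{d}{dt}\int_{x_0}^{h(t)}u\,dx=h'\delta+du_x(t,h)-du_x(t,x_0)+\int_{x_0}^{h}f(u)\,dx .
\]
Since $du_x(t,h)=-\delta h'$, the first two terms cancel, leaving
\[
du_x(t,x_0)=-\frac{d}{dt}\int_{x_0}^{h}u\,dx+\int_{x_0}^{h}f(u)\,dx .
\]
Together with the $V$-identity $V'(t)=d(\delta_0-\delta)+\int_0^h xf(u)\,dx$ from Lemma~\ref{th2.6}, this gives $|V'|\le C$. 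Hence $V$, and with it $h$, is bounded on $[0,T)$ by $V\le V(0)+CT$ and $h^2\ge 2V/C_1$. However, $h$ is bounded above only if $u$ is bounded below away from the boundary, and it is: $u\ge C^*(T)$ on $[l/2,h]$ gives $V\ge C^*(T)(h^2-l^2/4)/2$, so $h\le H(T)$.

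\emph{Conclusion via Schauder estimates.} With $h$ bounded above and below, flatten the domain by $y=x/h(t)$. The transformed equation has a drift $yh'/h$ multiplying $u_y$. Apply interior-up-to-boundary parabolic $L^p$ and Schauder estimates on unit time strips, where the drift coefficient is bounded by $\sup|h'|=:\Lambda(t)$. Combined with the Hölder interpolation $|u_x|\le \epsilon\|u\|_{C^{1+\alpha}}+C_\epsilon\|u\|_\infty$, this yields a bound of the form $\Lambda\le C+C\Lambda^{\beta}$ with $\beta<1$. Such a sublinear dependence comes from the scaling of the Stefan-type condition, as in \cite{DLNS}, and it implies that $\Lambda$ is bounded.

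The main obstacle is exactly this sublinear estimate, because the drift $h'$ enters the equation. If it fails, I would instead use local barriers of the form $\delta\pm M(1-e^{-k y})$ on $y\in(0,l/2)$, with $k$ chosen relative to the current speed bound. The monotone structure of $h'u_y$, which has a favourable sign on one side, should then handle each direction separately: an advancing front can be controlled by the lower barrier, and a retreating front by the upper barrier.
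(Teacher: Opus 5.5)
Your proposal has a genuine gap, located exactly at the step you flag. The integrated identity, the bound on $V$ (which needs Gronwall, $|V'|\le d|\delta_0-\delta|+C_0V$, rather than $|V'|\le C$), and the resulting bound $h\le H(T)$ are all correct, but none of them involves $h'$. The decisive claim $\Lambda\le C+C\Lambda^{\beta}$ with $\beta<1$ is asserted, not derived. It also cannot come from $L^p$/Schauder estimates plus interpolation, because a drift of size $\Lambda$ produces boundary layers of width $d/\Lambda$. For example, $U_t=dU_{yy}-\Lambda U_y$ on $(0,1)$ with boundary data $\delta_0,\delta$ has the stationary solution $A+Be^{\Lambda y/d}$, whose gradient at $y=1$ is about $\Lambda(\delta-\delta_0)/d$. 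Any estimate that sees only $\|u\|_\infty$ and the size of the drift is therefore at best linear in $\Lambda$. The Stefan condition then only returns the empty inequality $\Lambda\le C\Lambda$. The scaling is critical. What excludes such layers in this problem is the sign coupling between $h'$ and $u_x(t,h(t))$, and generic estimates do not see it.

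Your fallback is where the proof actually lives, but as stated it is too vague: choosing $k$ \emph{relative to the current speed bound} just reintroduces the circularity. Its sign heuristic is also wrong. For a barrier $\Phi(h(t)-x)$ one has $\partial_t\Phi=\Phi'\,h'$. For the upper barrier (which bounds $h'$ from above) and for the lower barrier (which bounds $h'$ from below) alike, this term is harmless when $h'\ge 0$ and harmful when $h'<0$. So the two directions do not decouple: both barriers need a lower bound on $h'$, and the barrier producing that bound is itself valid only under it.

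The missing idea is to make this self-consistent, as the paper does. With $c\in(0,1)$, take $\underline u=\delta+\delta c\,(e^{-m(h(t)-x)/c}-1)$ on $[h(t)-k/m,h(t)]$.
\begin{itemize}
\item Its slope at $x=h(t)$ gives exactly $-\frac d\delta\underline u_x=-dm$.
\item As long as $h'>-dm$, one has $\underline u_t<c\,d\,\underline u_{xx}$. This leaves a margin $d\delta m^2e^{-k/c}(1/c-1)$ that absorbs $f$ once $m$ is large. The ratio $1/c>1$ does not depend on $m$, which is how the barrier beats the critical scaling.
\item The inner edge is handled by $u\ge C^*(T)$ on $[l/2,h(t)]$; this is the only place $T$ enters.
\item A first-touching-time argument, with the Hopf lemma applied to $u-\underline u$, shows that $h'>-dm$ persists on all of $[0,T)$.
\end{itemize}
Only once this lower bound is in hand does a concave quadratic upper barrier with steepness $M>m$ large become a supersolution, which yields the upper bound on $h'$.
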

	
	\begin{proof}
We proceed in several steps.	
		
		{\bf Step 1.} {We  construct a lower solution to  bound  $h'(t)$ from below.}
		
		For some constants $0<c<1$, $k>0$ and $m>k/l$ to be specified later,
		define
		\[
		\begin{cases}
			\theta(s) := 	ce^{-s/c} - c & \text{for } s \geq 0, \\[2mm]
			\underline{u}(t,x) := \delta \theta((h(t)-x)m) + \delta &  \text{for } t \geq 0,\; x \in [h(t)-k/m,h(t)].
		\end{cases}
		\]
		We will show eventually that the following  holds:
		\begin{equation}\label{1}
			\begin{cases}
				\underline u_t< d\underline u_{xx}+f(\underline u),\ \ &t\in (0,T),\, x\in [h(t)-k/m,h(t)],\\
				\underline u(t,h(t)-k/m)\leq u(t,h(t)-k/m),\ \underline u(t, h(t))=\delta,\ & t\in [0,T),\\
				\underline u(0,x)\leq  u_0(x),\ &x\in [h_0-k/m,h_0].
			\end{cases}
		\end{equation}
		Assuming \eqref{1}, we obtain by the comparison principle that $u(t,x)\geq \underline u(t,x)$ for $t\in (0,T),\, x\in [h(t)-k/m,h(t)]$,
		and hence, due to $u(t,h(t))=\underline u(t, h(t))=\delta$, we deduce
				\begin{align}\label{2}
			h'(t)=-\frac{d}{\delta}  u_x(t, h(t))\geq -\frac{d}{\delta} \underline u_x(t, h(t))=-dm \quad\text{for } t\in [0,T).
		\end{align}
		
		Some of the inequalities in \eqref{1} can be shown to hold for the entire range of $t\in (0, T)$, some of them and \eqref{2} can only be shown  to hold for $t$ in some possibly smaller interval $(0, T_1)$ initially. We will show in Step 3 (after proving an upper bound for $h'(t)$ in Step 2) that all the desired inequalities hold with $T_1=T$.
		
		Define $c_0 := \min \left\{ C^*(T), \, \delta/2, \, 1/2 \right\} > 0$, where $C^*(T)$ is given in Lemma \ref{le2.5} satisfying
		\begin{align}\label{ne2.26}
				u(t,x)\geq C^*(T)\quad\text{for } t\in[0,T) \text{ and } x\in[l/2,h(t)].
		\end{align}
		  Select $c \in (0,1)$ sufficiently close to 1 such that
		\begin{equation*}
			k := -c \ln \left( \frac{c_0}{c\delta} + 1 - \frac{1}{c} \right) > 0.
		\end{equation*}
		Direct computation yields $\underline u(t, h(t) - k/m) = c_0$.
		We assume from now on that 
		\begin{align*}
			m>2k/l, \mbox{ and so } h(t)-k/m>l/2>0 \mbox{ for } t\in [0, T).
		\end{align*}
		By \eqref{ne2.26}, we have
		\begin{align*}
		u(t,x)\geq c_0=	\underline u(t,h(t)-k/m)\quad \text{ for } t\in [0, T), \ x\in [l/2, h(t)]\supset [h(t)-k/m,h(t)],
		\end{align*}
		and by the definition of $\underline u$, it satisfies $\underline u(t,\underline h(t))=\delta$; thus the desired conclusions in the second line of  \eqref{1} are verified. 
		
		Next we observe that for $x \in [h_0 - k/m, h_0]$, 
		\begin{equation*}
			\underline u_x(0, x) =-\delta m \theta'(h_0-x)\geq \delta m e^{-k/c} > \max_{x \in [0, h_0]} |u_0'(x)|
		\end{equation*}
		provided $m$ is sufficiently large. Furthermore, since $\underline u(0, h_0) = u_0(h_0) = \delta$, the above inequality  implies that for sufficiently large $m>0$,
		\begin{equation*}
			\underline u(0, x) < u_0(x) \quad \text{for }  x \in [h_0 - k/m, h_0),
		\end{equation*}
		which is the desired inequality on the third line of \eqref{1}.

		Finally we prove the  inequality in the first line of \eqref{1} for $t\in [0, T_{1})$, where $T_{1}\leq T$ is  defined by
		\begin{align*}
			T_1:=\sup\{	t\in (0, T]: h'(s)>-\frac{d}{\delta} \underline u_x(s, h(s))=-dm\quad \text{ for all } s\in [0,t)\}.
		\end{align*}
		Note that for $m\gg 1$,
		\begin{align*}
			-\frac{d}{\delta} \underline u_x(0, h_{0})=-dm<h'(0),
		\end{align*}
		which ensures that $T_{1}$ is well defined. (In Step 3, we will show that actually $T_{1}=T$.)

		A direct computation yields
		\begin{align}\label{u_xx}
			\underline u_t=-\delta m e^{-(h(t)-x)m/c} h'(t),\ \ \underline u_x=\delta m e^{-(h(t)-x)m/c},\ \  \underline u_{xx}=\frac{\delta m^2}{c} e^{-(h(t)-x)m/c}.
		\end{align}
		Then by the definition of $T_1$ we obtain
		\begin{align*}
			\underline u_t\leq d\delta m^2 e^{-(h(t)-x)m/c} \mbox{ for } t\in (0, T_1), \ x\in [h(t)-k/m,h(t)].
		\end{align*}
		Thus the first inequality in  \eqref{1} with $t\in (0, T_1)$ will follow if $K:=\max_{s\in [0, \delta]}|f(u)|$ and
		\[ d\delta m^2 e^{-(h(t)-x)m/c} <d \underline u_{xx}-K\leq d \underline u_{xx}  +f(\underline u) \mbox{ for } t\in (0, T_1), \ x\in [h(t)-k/m,h(t)],\]
		 which is satisfied, due to \eqref{u_xx},  when
		\begin{align}\label{m}
			m>\sqrt{\frac{Kce^{k/c}}{d\delta(1-c)}}.
		\end{align}

		{\bf Step 2.} We prove that
		\begin{align}\label{2.26}
			-dm \leq h'(t) \leq \frac{2M(2C_{1}-\delta)d}{\delta} \quad \text{for }  t \in [0,T_{1}).
		\end{align}

		By the definition of $T_{1}$, the first inequality in \eqref{2.26} holds automatically. It remains to verify the second. Recall from Lemma \ref{le2.5} that
		\[
		0 < u(t, x) \leq C_{1} \quad \text{for }  t \in [0, T) \text{ and } 0 < x \leq h(t).
		\]
		Since $f \in C^{1}$ with $f(0) = 0$, there exists a constant $L > 0$ depending on $C_{1}$ such that $f(u) \leq Lu$ for $u \in [0, 2C_{1}]$. Define
		\[\begin{cases}
			\Omega := \left\{ (t, x) : 0 \leq t < T_{1},\  h(t) - M^{-1} < x < h(t) \right\}
			\\
			\bar{u}(t, x) := \left(2C_{1} - \delta\right) \left[ 2M(h(t)-x) - M^{2}(h(t)-x)^{2} \right] + \delta,
		\end{cases}
		\]
		with $M > 1/l$ to be specified later. 
		
		Note that $\Omega \subset \{(t, x) : 0 \leq t < T_{1}, 0 < x < h(t)\}$, and for $  (t, x) \in \Omega,$
				\begin{align*}
			&\bar{u}(t, x) = \left(2C_{1} - \delta\right) \left( 1 - \left[ 1 - M(h(t)-x) \right]^{2} \right) + \delta \leq 2C_{1}  \\
			&\bar{u}(t, h(t)) = \delta \quad\text{and}\quad
			\bar{u}\left(t, h(t) - M^{-1}\right) = 2C_{1} > u\left(t, h(t) - M^{-1}\right).
		\end{align*}
		Moreover,  using $h'(t) \geq -m d$ on $[0,T_{1})$, we obtain, for $(t, x) \in \Omega$, 
		\begin{align*}
			\bar{u}_{t} - d \bar{u}_{xx} - f(\bar{u})
			&= \left(2C_{1} - \delta\right) h'(t) \left[ 2M - 2M^{2}(h(t)-x) \right] + 2d M^{2} \left(2C_{1} - \delta\right) - f(\bar{u}) \\
			&\geq -2d M m (2C_{1}-\delta) [1 - M(h(t)-x)] + 2d M^{2} (2C_{1}-\delta) - 2L C_{1} \\
			&\geq -2d M m (2C_{1}-\delta) + 2d M^{2} (2C_{1}-\delta) - 2L C_{1} \\
			&= 2d M (M - m) (2C_{1} - \delta) - 2L C_{1} \\
			&\geq 0
		\end{align*}
		provided that
		\[
		M \geq \sqrt{ \frac{L C_{1}}{d (2C_{1}-\delta)} } + m.
		\]
		
		Next, we show that 
		\[
		\bar{u}(0, x) \geq u_{0}(x) \quad \text{for }  x \in \left[h_{0} - M^{-1}, h_{0}\right]
		\]
		provided $M$ is sufficiently large. Select $M$ large enough to satisfy  
		\[
		M(2C_{1}-\delta) > \max_{x \in [0, h_{0}]} |u_{0}'(x)|;
		\]  
		then
		\[
		\bar{u}_{x}(0, x) \leq -M(2C_{1}-\delta) < u_{0}'(x) \quad \text{for }  x \in \left[h_{0} - (2M)^{-1}, h_{0}\right].
		\]  
		It then follows from  $\bar{u}(0, h_{0}) = u_{0}(h_{0}) = \delta$  that for such $M>0$,  
		\[
		\bar{u}(0, x) \geq u_{0}(x) \quad \text{for }  x \in \left[h_{0} - (2M)^{-1}, h_{0}\right].
		\]  
		Since $\bar{u}_{x}(0, x) \leq 0$ on $[h_{0} - M^{-1}, h_{0}]$, direct computation gives  
		\[
		\bar{u}(0, x) \geq \bar{u}\left(0, h_{0} - (2M)^{-1}\right) > \frac{2C_{1} - \delta}{2} + \delta > C_{1} \geq u_{0}(x) \quad \text{for }  x \in \left[h_{0} - M^{-1}, h_{0} - (2M)^{-1}\right].
		\]  
		Combining these inequalities, we conclude  
		\[
		\bar{u}(0, x) \geq u_{0}(x) \quad \text{for }  x \in \left[h_{0} - M^{-1}, h_{0}\right].
		\]  
		We may now apply the comparison principle over 
		$
		\left\{ (t, x) : 0 \leq t < T_{1}, \, h(t) - M^{-1} < x < h(t) \right\}
		$
		to obtain $u(t, x) \leq \bar{u}(t, x)$ in this region. Since $u(t, h(t)) = \bar{u}(t, h(t)) = \delta$, it follows that  
		\[
		u_{x}(t, h(t)) \geq \bar{u}_{x}(t, h(t)) = -2M(2C_{1} - \delta) \quad \text{for }  t \in [0, T_{1}).
		\]  
		Consequently,  
		\[
		-dm \leq h'(t) = -\frac{d}{\delta} u_{x}(t, h(t)) \leq \frac{2M(2C_{1} - \delta)d}{\delta} \quad \text{for } t \in (0, T_{1}),
		\]  
		where $M$ is chosen sufficiently large, independent of $T_{1}$.
		
		{\bf Step 3. } For sufficiently large fixed $m$, we show  that  \eqref{2}  holds for $t\in [0,T)$, and consequently, \eqref{2.26} is valid for all $t\in[0,T)$.

		By the definition of $T_1$, clearly either $T_1 = T$ or $T_1 < T$, and in the latter case, we necessarily have
		\begin{align}\label{2.27}
			h'(T_1) = -\frac{d}{\delta} \underline{u}_x(T_1, h(T_1)) = -dm.
		\end{align}
		Define $w := u - \underline{u}$. Then $w$ satisfies 
		\[
		\begin{cases}
			w_t > d w_{xx} + f(u) - f(\underline{u}) \geq d w_{xx} + c w, & t \in (0, T_1], \ x \in \left[h(t) - k/m, h(t)\right], \\
			w(t, h(t) - k/m) > 0, \quad w(t, h(t)) = 0, & t \in [0, T_1], \\
			w(0, x) \geq  0, & x \in \left[h_0 - k/m, h_0\right],
		\end{cases}
		\]
		for some constant $c\in\mathbb R$. The comparison principle yields
		\[
		w(t, x) > 0 \quad \text{for } t \in (0, T_1], \ x \in [h(t) - k/m, h(t)).
		\]
		Moreover, we can use the Hopf lemma to obtain $w_x(T_1, h(T_1)) < 0$, which implies $u_x(T_1, h(T_1)) < \underline{u}_x(T_1, h(T_1))$. Consequently,
		\[
		h'(T_1) = -\frac{d}{\delta} u_x(T_1, h(T_1)) > -\frac{d}{\delta} \underline{u}_x(T_1, h(T_1)) = -dm,
		\]
		contradicting \eqref{2.27}. Thus the desired inequality holds, and we have proved what are wanted.
	\end{proof}

	\subsection{Maximal existence}
	\begin{proof}[Proof of Theorem \ref{th1.1}.] Lemma \ref{le2.7} implies that as long as $h(T)>0$, the local solution obtained in
		Theorem \ref{th2.1} can be extended to some bigger time interval containing $[0, T]$. Let  $\left[0, \,T_{\mathrm{max}}\right) $ be the maximal interval of existence obtained via this extension, where $T_{\mathrm{max}}=\infty$ or $T_{\mathrm{max}}<\infty$. Then, by Step 4 in the proof of Theorem \ref{th2.1} we know that
		\[h \in C^{1+\frac{1+\alpha}{2}}\left(\left(0, T_{\mathrm{max}}\right)\right), \, u \in C^{1+\frac{\alpha}{2}, 2+\alpha}\left(\Omega_{T_{\mathrm{max}}}\right),\]
		where
		\[\Omega_{T_{\mathrm{max}}}:=\left\{(t, x): t \in\left(0,T_{\mathrm{max}}\right), \,x \in[0, h(t)]\right\}.\]
		This completes the proof.
	\end{proof}

	\begin{lemma}\label{lemma2.8} 	
		Suppose that $(\bf{F})$ holds, $\delta>0$ and $\delta_0 \geq 0$. Let $(u,h)$ be a solution to \eqref{1.1} defined for $t\in [0,T_{\mathrm{max}})$. If $\inf_{t\in (0,T_{\max})} h(t) > 0$, then necessarily $T_{\mathrm{max}} = \infty$.
	\end{lemma}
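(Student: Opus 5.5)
We argue by contradiction: suppose $T_{\mathrm{max}}<\infty$ while $l:=\inf_{t\in(0,T_{\max})}h(t)>0$. We will show that $(u,h)$ then has bounds on $[0,T_{\max})$ which let the local existence result, Theorem \ref{th2.1}, be reapplied at a time close to $T_{\max}$. That reapplication extends the solution past $T_{\max}$, contradicting maximality.

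\textbf{Step 1: uniform bounds on $h$ and $h'$.} Since $T:=T_{\max}<\infty$ and $l>0$, Lemma \ref{le2.5} gives $0<u\le C_1$ on $[0,T)$. It also gives $u\ge C^*(T)>0$ on $[l/2,h(t)]$. Lemma \ref{le2.7} then yields $|h'(t)|\le C_2(T)$ on $[0,T)$. Hence $h$ is Lipschitz, $l\le h(t)\le h_0+C_2(T)T$, and $h(t)\to h(T)\ge l>0$ as $t\to T$.

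\textbf{Step 2: straighten the domain and derive uniform H\"older bounds.} Use the change of variables $y=x/h(t)$, as in the local existence proof in Section 4. It turns \eqref{1.1} into a uniformly parabolic equation on $(0,1)$ whose coefficients involve $h(t)$ and $h'(t)$. The bounds of Step 1 make these coefficients bounded and the equation uniformly parabolic, with Dirichlet data $\delta_0$ and $\delta$. Standard $L^p$ estimates and Sobolev embedding, applied on $[\tau,T)$ for a fixed small $\tau>0$, give a bound on $\|u\|_{C^{(1+\alpha)/2,1+\alpha}}$ independent of $t<T$. The free boundary condition $h'=-\frac d\delta u_x(t,h(t))$ then gives $\|h'\|_{C^{\alpha/2}([\tau,T))}\le C$. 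With H\"older continuous coefficients, Schauder estimates give a uniform $C^{1+\alpha/2,2+\alpha}$ bound for $u$ on $[2\tau,T)$, so $\|u(t,\cdot)\|_{C^2([0,h(t)])}\le C_3$ for $t\in[2\tau,T)$.

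\textbf{Step 3: restart the solution.} The constant $T_0$ in Theorem \ref{th2.1} depends only on $h_0$, $\alpha$ and $\|u_0\|_{C^2}$. By Step 1, $h(t)\in[l,h_0+C_2T]$ for all $t$, and by Step 2, $\|u(t,\cdot)\|_{C^2}\le C_3$ for $t\in[2\tau,T)$. So there is a single $T_0>0$ that works for every initial datum $(u(t_1,\cdot),h(t_1))$ with $t_1\in[2\tau,T)$. For $t_1>\max\{2\tau,T-T_0/2\}$, the datum $u(t_1,\cdot)$ lies in $\mathcal X(h(t_1))$: it is positive on $(0,h(t_1)]$ by Lemma \ref{le2.5} and matches the boundary values $\delta_0$ and $\delta$. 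Theorem \ref{th2.1} gives a solution on $[t_1,t_1+T_0]$, and uniqueness shows it coincides with $(u,h)$ on $[t_1,T)$. Thus $(u,h)$ extends beyond $T$, which is the desired contradiction.

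\textbf{Main obstacle.} The delicate step is Step 2: making sure the parabolic estimates depend only on $T$, $l$, $C_1$ and $C_2(T)$ and not on how close $t$ is to $T$. Since $h'$ is a priori only bounded, I would first obtain the $W^{1,2}_p$ and $C^{1+\alpha}$ bounds, which need only bounded coefficients. Those give H\"older continuity of $h'$, which is then bootstrapped into the Schauder estimate. This mirrors Step 4 of the proof of Theorem \ref{th2.1}.
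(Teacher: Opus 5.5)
Your proposal is correct and follows essentially the same route as the paper. Both argue by contradiction from $T_{\max}<\infty$, take the bounds on $u$, $h$, $h'$ from Lemmas \ref{le2.5} and \ref{le2.7}, use $L^p$/Schauder estimates (as in Step 4 of Theorem \ref{th2.1}) to get a $C^{2+\alpha}$ bound on $u(t,\cdot)$ for $t$ near $T_{\max}$ that does not depend on $t$, and then restart Theorem \ref{th2.1} with a uniform existence time to pass $T_{\max}$. Your explicit observation that this uniform existence time also relies on $h(t)\in[l,h_0+C_2T]$ is a point the paper leaves implicit.
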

	\begin{proof}
		Arguing by contradiction, we assume \( T_{\mathrm{max}} < \infty \). 
		By Lemmas \ref{le2.5} and \ref{le2.7}, there exist constants \( C_{1}, C_{2} > 0 \) such that 
		\begin{equation}\label{Bound1}
			\begin{cases}
				0 \leq u(x, t) \leq C_{1} & \text{for } t \in [0, T_{\mathrm{max}}) \text{ and } x \in [0, h(t)], \\
				\left|h^{\prime}(t)\right| \leq C_{2} \quad \text{and} \quad |h(t)| \leq C_{2} t + h_{0} & \text{for } t \in [0, T_{\mathrm{max}}).
			\end{cases}
		\end{equation}	
		
		For any small constant  $\varepsilon>0$, by Lemmas   \ref{le2.5}, \ref{th2.6} and \ref{le2.7}, we see from the proof of Theorem \ref{th2.1}  that $ u \in C^{\frac{1+\alpha}{2}, 1+\alpha}\left(\bar{\Omega}_{T_{\mathrm{max}}-\varepsilon}\right)$. Therefore, as in Step 4 of the proof of Theorem \ref{th2.1}, we can use Schauder's estimates to obtain that for any fixed  $0<T<T_{\mathrm{max}}-\varepsilon$, 
		\[\Arrowvert u\Arrowvert_{C^{1+\frac{\alpha}{2}, 2+\alpha}\left(\Omega_{T_{\mathrm{max}}-\varepsilon} \backslash \Omega_{T}\right)} \leq M,\]
		where  $M$  depends on  $T$, $T_{\mathrm{max}}$,    $C_1,\ C_2$, but is independent of $ \varepsilon$. Since $ \varepsilon>0 $ can be arbitrarily small, it follows that for any $ t \in\left[T,\, T_{\mathrm{max}}\right)$,
		\begin{equation}\label{B2}
			\Arrowvert u(t, \cdot)\Arrowvert_{C^{2+\alpha}([0, h(t)])} \leq M.
		\end{equation}
		Now, repeating the proof of Theorem \ref{th2.1} with any $t_0\in (T, T_{\rm max})$ as the initial time, we conclude that there exists  $\varepsilon_{0}>0 $, depending only on $M$, $ C_1,\ C_2$  such that \eqref{1.1} with initial time  $t_0$ has a unique solution for $t\in [t_0, t_0+\varepsilon_0]$. Taking $t_0=T_{\mathrm{max}}-\frac{\varepsilon_{0}}{2}$  we see that  $(u,  h)$ is defined for $ t $ at least up to  $T_{\mathrm{max}}-\frac{\varepsilon_{0}}{2}+\varepsilon_{0}$, which is a contradiction to the definition of  $T_{\mathrm{max}}$. Thus, we conclude that $T_{\mathrm{max}}=\infty$.
	\end{proof}
	
	\begin{remark}\label{remark2.9}
		If $\delta_{0}\geq \delta$, then by Lemma \ref{th2.6} {\rm (i)} we have 
		\[
		h(t) \geq \sqrt{\tfrac{2V(0)}{C_{1}}}\, e^{-\tfrac{C_{0}T_{\mathrm{max}}}{2}} \quad \text{for } t \in [0, T_{\mathrm{max}}).
		\]  
		This leads to $T_{\mathrm{max}} = \infty$ by Lemma \ref{lemma2.8}.
	\end{remark}
	
	\section{Long time dynamics}
	
	Throughout this section, we assume $\delta\in (\hat\theta_f,1)$, and $f$ satisfies ${\bf (f_m)}$, ${\bf (f_b)}$ or ${\bf (f_c)}$, which we abbreviate to ${\bf (F)}$ as before. We will first classify the behaviour of $h(t)$, and then determine the behaviour of $u(t,x)$ accordingly.

	\subsection{Behaviour of $h(t)$.} 	
		
	We start by proving a simple lemma which will be used several times later.
	Since $(\bf{F})$ holds and $\delta\in(\hat\theta_f,1)$, we can choose 
	$\tilde\theta\in (\hat\theta_f, \delta)$ and
	$\tilde{f}\in C^1$ satisfying
		\[
  \begin{cases}\tilde{f}(u)\leq f(u) \ \mbox{ for }\ u\geq0,\ \tilde{f}(0)=\tilde{f}(\tilde \theta)=\tilde{f}(\delta)=0,\ \tilde{f}'(0)<0,\ \tilde{f}'(\delta)<0, \  \\
		\tilde{f}(u)<0 \mbox{ for } u\in (0,\tilde\theta)\cup (\delta,\infty),\ \tilde{f}(u)>0 \mbox{ for } u\in (\tilde\theta, \delta),\
		\int_0^\delta\tilde{f}(u)du>0.
	\end{cases}
	\]
		By \cite{DL},  there exists $\tilde{c}_0>0$ such that for each $c\in(0,\tilde{c}_0)$, there exists a unique solution $q=\tilde{q}_c$ satisfying
\begin{equation}\label{nn3.1}
	\begin{cases}
		dq^{\prime \prime} -cq^{\prime}+ \tilde{f}(q) = 0,\ q'>0 \mbox{ in }  [0, \infty), \\
		q(0) = 0,\  q(\infty)=\delta.
	\end{cases}
\end{equation}
Fix $c\in(0,\tilde{c}_0)$, $\theta^{**}\in(\hat\theta_f,\delta)$, and denote
\[m^* = \begin{cases}
	\delta_0 & \mbox{ if } \delta_0 < \delta,\\
	\theta^{**} & \mbox{ if } \delta_0 \geq \delta.
\end{cases}\]
Then, there exists $x_0\geq0$ such that $\tilde{q}_c(x_0)=m^*$.
	\begin{lemma}\label{nlower}
			Suppose $\delta \in (\hat\theta_f,1)$, $\delta_0 \geq 0$ and $(\bf{F})$ holds.  If $(u, h)$ is a solution of \eqref{1.1} with $T_{max}=\infty$, and $\tilde q_c$ and $x_0$ are given as above, then there exists a constant $t_0 \geq 0$,  such that 
			\[u(t,x)\geq \tilde{q}_c(x+x_0)\ \mbox{ for }\ t\geq t_0 \mbox{ and } x\in[0, h(t)].\]
	\end{lemma}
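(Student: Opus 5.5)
The plan is to build a lower solution from the wave $\tilde q_c$ that starts far to the right, where it is trivially below $u$, and slides left at speed $c$. It stops once it reaches the target position $\tilde q_c(x+x_0)$, and the comparison principle on the moving domain $\{0<x<h(t)\}$ then gives the claim. No information on $u_x(t,0)$ is needed.

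\emph{The lower solution.} Extend $\tilde q_c$ by $0$ on $(-\infty,0)$. Since $\tilde q_c(0)=0$, $\tilde f(0)=0$ and $\tilde q_c'>0$, the extension is a maximum of two subsolutions of $\partial_t-d\partial_{xx}-\tilde f$, so it is a (weak) subsolution. Set $s_0:=h(0)+x_0$, $t_0:=s_0/c$, and
\[
\underline w(t,x):=\tilde q_c\big(x+x_0-\max\{s_0-ct,0\}\big),\qquad t\ge 0,\ x\in[0,h(t)].
\]
For $0<t<t_0$ we have
\[
\underline w_t=c\tilde q_c' = d\tilde q_c''+\tilde f(\tilde q_c)\le d\underline w_{xx}+f(\underline w),
\]
because $\tilde f\le f$. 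For $t\ge t_0$, $\underline w(t,x)=\tilde q_c(x+x_0)$ is time independent and satisfies
\[
d\underline w_{xx}+f(\underline w)\ge d\tilde q_c''+\tilde f(\tilde q_c)=c\tilde q_c'>0=\underline w_t .
\]
The two pieces agree at $t=t_0$, so $\underline w$ is continuous and is a subsolution on all of $\{t>0,\ 0<x<h(t)\}$. Where the argument is negative, $\underline w$ is the maximum of $0$ and a subsolution, and $0$ solves the equation because $f(0)=0$.

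\emph{Boundary and initial ordering.}
\begin{itemize}
\item At $x=0$: by monotonicity, $\underline w(t,0)\le\tilde q_c(x_0)=m^*$. If $\delta_0<\delta$ then $m^*=\delta_0=u(t,0)$. If $\delta_0\ge\delta$ then $m^*=\theta^{**}<\delta\le\delta_0$.
\item At $x=h(t)$: $\underline w(t,h(t))<\sup\tilde q_c=\delta=u(t,h(t))$. Since $T_{max}=\infty$ forces $h(t)>0$ for all $t$, the domain never degenerates.
\item At $t=0$: for $x\in[0,h_0]$ the argument is $x+x_0-s_0=x-h_0\le 0$, so $\underline w(0,\cdot)\equiv0<u_0$ on $(0,h_0]$, using Lemma \ref{le2.5}.
\end{itemize}

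\emph{Conclusion and main obstacle.} The comparison principle on the non-cylindrical domain $\{(t,x):t>0,\ 0<x<h(t)\}$ (with $h$ given and $C^1$) then yields $u\ge\underline w$ for all $t\ge0$. In particular $u(t,x)\ge\tilde q_c(x+x_0)$ for $t\ge t_0$ and $x\in[0,h(t)]$. The only delicate point is justifying this comparison for the merely Lipschitz subsolution, because of the kink where the argument of $\tilde q_c$ crosses $0$ and the switch at $t=t_0$. I would treat it as a maximum of classical subsolutions, or equivalently run the usual first-touching-point argument: at a first contact point the contact cannot occur on the flat zero part, since $u>0$ there.
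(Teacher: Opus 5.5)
Your proposal is correct and is essentially the paper's argument: the same leftward-moving wave $\tilde q_c(x+ct-L)$, extended by $0$ and started from zero data, with the same checks at $x=0$, $x=h(t)$ and $t=0$, and the same $t_0=(h_0+x_0)/c$. The only difference is how you reach times $t>t_0$. The paper restarts the wave with $L=ct-x_0\ge h_0$ so that it arrives exactly at time $t$, whereas you freeze the profile at $t_0$ and use that $\tilde q_c(x+x_0)$ is a strict stationary subsolution (since $d\tilde q_c''+f(\tilde q_c)\ge c\tilde q_c'>0$); this works equally well.
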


	\begin{proof}
		Regarding $L$ as a parameter, we define a family of lower solutions by
			\[
		w_L(t,x): = 
		\begin{cases} 
			0 & \text{if } x < -c t + L, \\[4pt]
			\tilde{q}_c(x + c t - L) & \text{if } -c t + L \leq x <\infty.
		\end{cases}
		\]
		Take $L\geq h_0$ and denote $t_L:=(L+x_0)/c$.
		Then 
		\[
		w_L(t_L, 0)=\tilde q_c(x_0)=m^*\leq\delta_0.
		\]
	Due to \eqref{nn3.1}, the function \( w_{L} \) satisfies (in the weak sense)
\begin{equation}\label{sxj}
	\left\{
	\begin{array}{ll}
		\partial_t w_L - d\, \partial_{xx} w_L - f(w_L) \leq 0 = u_t - d u_{xx} - f(u), & 0 < t \leq t_L,\ 0 < x < h(t), \\
		w_L(t, 0) \leq \delta_0 = u(t, 0), & 0 < t \leq t_L, \\
		w_L(t, h(t)) < \delta = u(t, h(t)), & 0 < t < t_L, \\
	w_L(0, x)=0 \leq u(0, x), & x \in [0, h(0)].
	\end{array}
	\right.
\end{equation}
 By the comparison principle we have
\[
w_L(t,x) \leq u(t,x) \quad \text{for } t \in [0, t_L],\ x \in [0, h(t)].
\]
In particular, 
\[ u(t_L,x)\geq w_L(t_L,x)=\tilde{q}_c(x + c t_L - L)=\tilde{q}_c(x + x_0) \quad \text{for }  x \in [0, h(t_L)].\]
For any given $t\geq t_0:=\frac{h_0+x_0}{c}$ we can find $L\geq h_0$ such that $t_L=t$, and hence we have
\[u(t,x)\geq \tilde{q}_c(x + x_0) \quad \text{for } t\geq t_0, \ x \in [0, h(t)]. \]
	This completes the proof of the lemma.	
		\end{proof}
	
\begin{corollary}\label{c3.2}
	Under the assumptions of Lemma \ref{nlower}, suppose $l: = \inf_{[0,\infty)} h(t) > 0$. Then there exists a finite constant $C_2$ such that
	\[
	|h'(t)| \le C_2 \qquad \text{for all } t \ge 0.
	\]
	Moreover,
	\[\begin{cases}
	\delta_0 < \delta & \implies u(t,x) > \delta_0,\  u_x(t,0) > 0 \mbox{ for $t \ge t_0$ and $x \in (0, h(t)]$},\\
	\delta_0 \ge \delta & \implies 	u(t,x) \ge \theta^{**} > \hat\theta_f \mbox{ for $t \ge t_0$ and $x \in (0, h(t)]$}.
	\end{cases}
	\]
\end{corollary}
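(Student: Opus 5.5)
The plan is to rerun the argument of Lemma \ref{le2.7} with its only $T$-dependent ingredient, the lower bound $C^*(T)$ from Lemma \ref{le2.5}, replaced by the time-independent lower bound supplied by Lemma \ref{nlower}. The remaining ingredients of that proof are already uniform in time: $C_1$ from Lemma \ref{le2.5}, the constant $K$, and the constants $c,k,m,M$ once $c_0$ is fixed.

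\textbf{Step 1: a uniform lower bound away from $x=0$.} By Lemma \ref{nlower}, for $t\ge t_0$ and $x\in[l/2,h(t)]$ we have
\[
u(t,x)\ge \tilde q_c(x+x_0)\ge \tilde q_c(l/2+x_0)=:c_1>0,
\]
using that $\tilde q_c$ is increasing. Note that $[l/2,h(t)]$ is nonempty because $h(t)\ge l$.

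\textbf{Step 2: bounding $h'$ on $[0,t_0+1]$ and on $[t_0,\infty)$.} On the compact interval $[0,t_0+1]$, Lemma \ref{le2.7} applied with $T=t_0+1$ already gives $|h'|\le C_2(t_0+1)$. For $t\ge t_0$, I repeat Steps 1--3 of the proof of Lemma \ref{le2.7} with initial time $t_0$ in place of $0$. The initial datum becomes $u(t_0,\cdot)$, which lies in $C^2([0,h(t_0)])$ by Theorem \ref{th1.1}, so $\max|u_x(t_0,\cdot)|$ is a fixed finite number. In place of $c_0$ I use $\min\{c_1,\delta/2,1/2\}$. With these choices $c,k,m$ (Step 1 of that proof) and $M$ (Step 2) are chosen once and for all, independently of the length of the time interval. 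The comparison arguments in that proof, together with the Hopf-lemma contradiction showing $T_1=\infty$, then give $-dm\le h'(t)\le 2M(2C_1-\delta)d/\delta$ for all $t\ge t_0$. Taking $C_2$ to be the larger of the two bounds proves the first claim.

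The only point needing care is the upper barrier $\bar u$: at $x=h(t)-M^{-1}$ it must dominate $u$. This holds because $\bar u=2C_1>C_1\ge u$ there, with $C_1$ uniform by Lemma \ref{le2.5}. I also need $h(t)-M^{-1}>0$ and $h(t)-k/m>l/2$, which follow from $h\ge l$ together with $M>1/l$ and $m>2k/l$.

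\textbf{Step 3: the sign statements.} These follow directly from Lemma \ref{nlower}, since $\tilde q_c'>0$ and $\tilde q_c(x_0)=m^*$.
\begin{itemize}
\item If $\delta_0<\delta$, then $m^*=\delta_0$. For $t\ge t_0$ and $x\in(0,h(t)]$ we get $u(t,x)\ge\tilde q_c(x+x_0)>\tilde q_c(x_0)=\delta_0$. Since also $u(t,0)=\delta_0=\tilde q_c(x_0)$, the difference $u(t,\cdot)-\tilde q_c(\cdot+x_0)$ is nonnegative and vanishes at $x=0$. Hence $u_x(t,0)\ge \tilde q_c'(x_0)>0$.
\item If $\delta_0\ge\delta$, then $m^*=\theta^{**}$, and $u(t,x)\ge \tilde q_c(x+x_0)\ge\theta^{**}>\hat\theta_f$.
\end{itemize}

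The main obstacle is Step 2. Its content is a careful check that every constant in the proof of Lemma \ref{le2.7} depends only on $c_1$, $C_1$, $l$, $f$, $d$, $\delta$ and $\|u(t_0,\cdot)\|_{C^2}$, and not on the time horizon. The restart at time $t_0$ is what makes this possible.
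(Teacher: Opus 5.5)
Your proposal is correct and follows essentially the paper's route. The paper also reruns the proof of Lemma \ref{le2.7} with $C^*(T)$ replaced by a time-independent lower bound, and obtains the sign statements from Lemma \ref{nlower} exactly as in your Step 3. The only cosmetic difference is that the paper does not restart at $t_0$: it runs the argument once on $[0,\infty)$, taking $c_0$ to be the minimum of $\tilde q_c(l/2+x_0)$, $\min u$ over $t\in[0,t_0]$, $x\in[l/2,h(t)]$, $\delta/2$ and $1/2$.
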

	
	\begin{proof} 
	Applying Lemma~\ref{le2.7} with $T=\infty$, we see that there exists a constant $C_2$ such that $|h'(t)|\leq C_2$ for all $t\geq 0$.
	
		By Lemma~\ref{nlower}, we have $u(t,x)\geq \tilde{q}_c(x+x_0)\geq \tilde{q}_c(l/2+x_0)>0$ for $t\geq t_0$ and $x\in[l/2,h(t)]$.
		Define 
		\[
		c_0:=\min \bigl\{ \tilde{q}_c(l/2+x_0),\; \min_{0\leq t\leq t_0,\ l/2\leq x\leq h(t)} u(t,x),\; \delta/2,\; 1/2 \bigr\}.
		\]
		Since $u(t,x)>0$ for $t\in[0,t_0]$ and $x\in[l/2,h(t)]$, we see that 
		 $u(t,x)\geq c_0>0$ for such $(t,x)$.	
		
		If $\delta_0<\delta$,  then Lemma~\ref{nlower} implies  $u(t,x)\geq \tilde{q}_c(x+x_0)>\tilde q_c(x_0)=\delta_0$ for $t\geq t_0$ and $x\in(0,h(t)]$. Hence,
		due to $u(t,0)=\tilde q_c(x_0)=m^*=\delta_0$  for $t\geq t_0$, we obtain $u_x(t,0)\geq \tilde{q}_c'(x_0)>0$ for $t\geq t_0$.
		
		If $\delta_0\geq \delta$, Lemma~\ref{nlower} implies that $u(t,x)\geq q(x+x_0)\geq q(x_0)=\theta^{**}$ for $t\geq t_0$ and $x\in[0,h(t)]$.
	\end{proof}

	Since the case $\delta_0 < \delta$ leads to more complex dynamics of \eqref{1.1}, we  treat the case $\delta_0 \geq \delta$ first, which is done in the next two lemmas below. The subsequent lemmas address the case $0\leq \delta_0 < \delta$.

	\begin{lemma}\label{le2.8}
		Suppose $\delta \in (\hat\theta_f,1)$, $\delta_0 \geq \delta$ and $(\bf{F})$ holds.  Let $(u, h)$ be the solution of \eqref{1.1}. Then $T_{max}=\infty$ and there exists a constant $T_0 \geq 0$ such that 
		\[u(t, x) \geq \delta\quad \text{for all } x \in [0, h(t)] \text{ and } t \in [T_{0}, \infty).\] 
		Consequently, 
		\[h'(t) \geq 0 \quad \text{for } t \in [T_{0}, \infty).\]  
	\end{lemma}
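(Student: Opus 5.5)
Global existence is already settled: by Remark \ref{remark2.9}, Lemma \ref{th2.6}(i) together with Lemma \ref{lemma2.8} gives $T_{\mathrm{max}}=\infty$ whenever $\delta_0\ge\delta$. So only the lower bound $u\ge\delta$ and its consequence for $h'$ need proof. The plan is to start from the lower bound of Lemma \ref{nlower} and push it up to $\delta$ with a spatially homogeneous ODE subsolution. Note that Lemma \ref{nlower} only requires $T_{\mathrm{max}}=\infty$, not a uniform positive lower bound for $h$.

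\textbf{Step 1.} With $\delta_0\ge\delta$ we have $m^*=\theta^{**}$. Lemma \ref{nlower} then gives, for some $t_0\ge0$,
\[
u(t,x)\ge \tilde q_c(x+x_0)\ge \tilde q_c(x_0)=\theta^{**}\quad\text{for } t\ge t_0,\ x\in[0,h(t)],
\]
using that $\tilde q_c$ is increasing. Here $\theta^{**}\in(\hat\theta_f,\delta)$.

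\textbf{Step 2.} Let $z(t)$ solve $z'=f(z)$ with $z(t_0)=\theta^{**}$. Under $\mathbf{(f_m)}$ we have $f>0$ on $(0,1)$. Under $\mathbf{(f_b)}$ or $\mathbf{(f_c)}$ we have $\hat\theta_f=\theta^*_f\ge\theta$, so $f>0$ on $[\theta^{**},\delta]$. In either case $z$ increases strictly and reaches $\delta$ at some finite time $T_0>t_0$; this uses $\delta<1$ and $\min_{[\theta^{**},\delta]}f>0$.

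On $[t_0,T_0]$ we compare $u$ with $z$ over the moving domain $\{0<x<h(t)\}$:
\begin{itemize}
\item $z$ solves the equation exactly;
\item on the boundary, $z(t)\le\delta\le\delta_0=u(t,0)$ and $z(t)\le\delta=u(t,h(t))$;
\item initially, $z(t_0)=\theta^{**}\le u(t_0,\cdot)$.
\end{itemize}
The comparison principle on a domain with given (known, $C^1$) lateral boundary $x=h(t)$ therefore yields $u\ge z$ there. In particular $u(T_0,\cdot)\ge\delta$.

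\textbf{Step 3.} For $t\ge T_0$, the constant $\delta$ is a subsolution, since $f(\delta)>0$ because $\delta\in(\hat\theta_f,1)$. Its boundary values satisfy $\delta\le u$ at both $x=0$ and $x=h(t)$, and $\delta\le u(T_0,\cdot)$ initially. The same comparison gives $u\ge\delta$ for all $t\ge T_0$ and $x\in[0,h(t)]$.

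Since $u(t,h(t))=\delta$ is then a minimum of $u(t,\cdot)$ on $[0,h(t)]$, we get $u_x(t,h(t))\le 0$. Hence $h'(t)=-\frac d\delta u_x(t,h(t))\ge0$ for $t\ge T_0$.

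\textbf{Main obstacle.} The only delicate point is justifying comparison on the moving domain. This is handled by the standard maximum principle, since $h$ is a fixed $C^1$ function along the solution and the boundary data are prescribed constants. One must also check that $f>0$ on $[\theta^{**},\delta]$ in the bistable and combustion cases, which follows from $\theta^{**}>\theta^*_f\ge\theta$.
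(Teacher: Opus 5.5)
Your proof is correct and follows essentially the same route as the paper: Lemma \ref{nlower} gives a lower bound $u\ge\theta^{**}$ for $t\ge t_0$; a spatially homogeneous ODE subsolution $z'=f(z)$ then lifts $u$ to $\delta$ in finite time; and the constant subsolution $\delta$ keeps $u\ge\delta$ afterwards, so $u_x(t,h(t))\le 0$ and $h'\ge 0$. The differences are cosmetic. You start the ODE at $\theta^{**}$ rather than at $\min u(t_0,\cdot)$. You also invoke Lemma \ref{nlower} directly instead of Corollary \ref{c3.2}, which avoids that corollary's hypothesis $\inf h>0$.
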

	\begin{proof}
		By Remark ~\ref{remark2.9}, we have $T_{\max} = \infty$. Let $t_0\geq 0$ be given by Corollary \ref{c3.2} and 
		\[m_0 := \min_{x \in [0, h(t_0)]} u(t_0,x)\in [\theta^{**}, \delta].\]
		Then consider the auxiliary ODE problem
		\[
		v' = f(v)\quad \text{for } t>0, \quad v(0) = m_0.
		\]
		It follows from condition $(\bf{F})$ that $v'(t) > 0$ for all $t > 0$ and $v(t) \to 1$ as $t \to \infty$.  Hence there exists a unique $t_1 \geq 0$ satisfying 
		\[
		v(t_1) = \delta \quad \text{and} \quad m_0 \leq v(t) \leq \delta \quad \text{for} \quad t \in [0, t_1].
		\]
		This allows us to apply the standard comparison principle over the region 
		\[
		\mathcal{D}_1 := \{(t, x) : 0 \leq t \leq t_1,\  0 \leq x \leq h(t+t_0)\}
		\]
		to obtain $u(t+t_0, x) \geq v(t)$ in $\mathcal{D}_1$. In particular, 
		\[
		u(t_1+t_0, x) \geq v(t_1)=\delta \quad \text{for} \quad x \in [0, h(t_1+t_0)].
		\]
		Now consider the region
		\[
		\mathcal{D}_2 := \{(t, x) : t \geq T_0:=t_0+t_1,\  0 \leq x \leq h(t)\}.
		\]
		Comparing $u(t, x)$ with the constant subsolution $\underline{u} \equiv \delta$ gives $u(t, x) \geq \delta$  in $\mathcal{D}_2$, which implies $u_x(t, h(t))\leq 0$. Hence 
		\[h'(t)=-\frac{d}{\delta}u_x(t, h(t))\geq 0\quad\text{for } t\geq T_0.\]
		This completes the proof.	
	\end{proof}

	\begin{lemma}\label{le3.1}
		Under the conditions of Lemma \ref{le2.8}, we have
		\( h_{\infty}:=\lim _{t \rightarrow \infty} h(t)=\infty.\)
	\end{lemma}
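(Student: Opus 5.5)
By Lemma \ref{le2.8}, $h'(t)\ge 0$ for $t\ge T_0$, so $h_\infty:=\lim_{t\to\infty}h(t)\in(0,\infty]$ exists. I will argue by contradiction and assume $h_\infty<\infty$. The plan is to identify the limiting profile of $u$ and then show that no admissible stationary state exists on a bounded interval when $\delta_0\ge\delta$.

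\textbf{Step 1: limiting profile.} Since $h$ is monotone for $t\ge T_0$ and bounded, and $\inf h>0$, Corollary \ref{c3.2} gives $|h'|\le C_2$. The parabolic Schauder estimates used in the proof of Lemma \ref{lemma2.8}, applied on unit time windows after straightening the boundary, then give $u$ bounded in $C^{1+\alpha/2,2+\alpha}$ uniformly in $t\ge 1$, and $h'$ bounded in $C^{\alpha/2}$. Because $h(t)\to h_\infty$ and $h'$ is uniformly H\"older, we get $h'(t)\to 0$. Next, for any sequence $t_n\to\infty$, $u(t_n+\cdot,\cdot)$ has a subsequence converging in $C^{1,2}_{loc}$ to an entire solution $U$ on $\mathbb R\times[0,h_\infty]$. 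This $U$ satisfies
\[
U(t,0)=\delta_0,\quad U(t,h_\infty)=\delta,\quad U_x(t,h_\infty)=0 .
\]
The last condition comes from $h'\to0$. Moreover, Lemma \ref{le2.8} gives $U\ge\delta$.

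\textbf{Step 2: no admissible limit.} For $t\ge T_0$, the function $u$ lies above the constant $\delta$, and both equal $\delta$ at $x=h(t)$. Since $f(\delta)>0$, the constant $\delta$ is a strict subsolution. The strong maximum principle then gives $u>\delta$ in the interior, so $U\ge\delta$ with $U(t,h_\infty)=\delta$. Write $w:=U-\delta\ge0$. It satisfies
\[
w_t - d w_{xx} = f(U)\ge c_\ast>0
\]
near $x=h_\infty$, because $f(\delta)>0$ and $U$ is close to $\delta$ there. The condition $w(t,h_\infty)=0$ holds on a boundary point, and Hopf's lemma (which also applies to supersolutions of the linear equation) forces $w_x(t,h_\infty)<0$. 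This contradicts $U_x(t,h_\infty)=0$.

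\textbf{Main obstacle.} The main difficulty is to make Step 2 rigorous. Because $f(\delta)>0$ makes $\delta$ a strict subsolution, the interior strict inequality survives in the limit and $w\not\equiv 0$, so Hopf applies. The danger is a degenerate case where $U\equiv\delta$ near the boundary, but this is excluded since $\delta$ is not a solution. A cleaner alternative is an energy argument: multiply by $U_x$ for a stationary limit. To get stationarity I would use $\int_{T}^\infty h'(t)\,dt<\infty$ and try to derive $\int^\infty\!\!\int u_t^2<\infty$ from a Lyapunov functional. If that is hard, I would simply rely on the Hopf argument above, applied for each fixed $t$ to the entire solution $U$, which needs no stationarity.

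Alternatively, one could bypass the limit entirely with a quantitative lower barrier near the front. Since $u\ge\delta$ and $f\ge c_\ast$ on $[\delta,\delta+\epsilon]$, a subsolution of the form $\delta+\epsilon\phi(h(t)-x)$ with a uniform slope at the front would give $h'(t)\ge\eta>0$ for large $t$. This contradicts boundedness of $h$ directly, and I would keep it as a backup.
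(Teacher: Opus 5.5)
Your proposal is correct and follows essentially the same route as the paper. You get $h'\to0$ from uniform H\"older control of $h'$; the paper obtains this via $L^p$ estimates and Sobolev embedding in the straightened variable, which is the right first step before any Schauder bootstrap, since $h'$ is a priori only bounded. You then pass to an entire limit with $U_x(t,h_\infty)=0$ and $U\ge\delta$, and conclude via the strong maximum principle and the Hopf lemma, using that $\delta$ is a strict subsolution because $f(\delta)>0$. The alternatives you list at the end are not needed.
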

	\begin{proof}
		By Lemma~\ref{le2.8},   the limit
		\[
		h_{\infty} := \lim_{t \to \infty} h(t) \ \text{exists and}\ h_{\infty} \in [ h(T_{0}),\infty].
		\]

		Suppose now that $  h_{\infty}$  is finite. Define the transformation
		\[y=\frac{x}{h(t)}, \quad U(t, y)=u(t, x) .\]
		Then $U$ satisfies 
		\begin{equation}\label{3.1}
			\left\{\begin{array}{ll}
				U_{t} -  \frac{d}{h^2(t)} U_{yy} - \frac{h'(t)}{h(t)}y U_{y} = f(U), & t > 0,\  y \in [0, 1], \\
				U(t, 0) = \delta_{0},\quad U(t, 1) = \delta, & t > 0, \\
				h'(t) = -\frac{d}{\delta} \frac{1}{h(t)} U_{y}(t, 1), & t > 0, \\
				U(0, y) = u_{0}(h_{0} y), & y \in [0, 1].
			\end{array}\right.
		\end{equation}
By Lemma \ref{le2.5} and Corollary \ref{c3.2}, we have $\| U(t,\cdot)\|_\infty\leq C_1$ and $|h'(t)|\leq C_2$ for $t> 0$. Applying standard $L^{p}$ theory to \eqref{3.1} (away from the initial time), we conclude that for any $p > 1$ and every integer $n\geq 1$,
		\[
		\| U \|_{W_{p}^{1,2}([n, n+2] \times [0, 1])} \leq C_{p},
		\]
		where $C_{p} > 0$ is independent of $n$. Selecting $p$ sufficiently large and invoking the Sobolev embedding theorem, we conclude from the third equation in \eqref{3.1} that $h'(t)$ is uniformly continuous for $t \geq 1$. This uniform continuity and $h_\infty<\infty$ imply that $ h^{\prime}(t) \rightarrow 0$   as $ t \rightarrow \infty$.
		Let $ \left\{t_{n}\right\}$  be an arbitrary sequence increasing to  $\infty $ as $ n \rightarrow \infty$, and define
		\[U_{n}(t, y):=U\left(t_{n}+t, y\right).\]
		Then, it is easy to verify that
		\begin{equation}\label{Un}
			\left\{
			\begin{array}{ll}
				(U_{n})_{t} - \frac{d}{h^2(t+t_n)}  (U_{n})_{yy} - \frac{h'(t+t_n)}{h(t+t_n)} y(U_{n})_{y} = f(U_{n}), & \! t > -t_n, \, y \in [0, 1], \\
				U_{n}(t, 0) = \delta_{0},\quad U_{n}(t, 1) = \delta, & \! t > -t_n, \\
				h'(t+t_n) = - \frac{d}{\delta h(t+t_n)} (U_{n})_{y}(t, 1), & \! t > -t_n.
			\end{array}
			\right.
		\end{equation}
		Applying $ L^{p} $ estimates to \eqref{Un}  and using the Sobolev embedding theorem, we find that, subject to a subsequence, $ U_{n} \rightarrow \tilde{U}$  in  $C_{l o c}^{(1+\alpha) / 2,1+\alpha}\left(\mathbb{R} \times\left[0, 1\right]\right)$  for some $ \alpha \in(0,1)$. The limit function  $\tilde{U} $ satisfies (in the weak sense and then in the classical sense)
		\begin{equation}\label{02.31}
			\left\{\begin{array}{ll}
				\tilde{U}_{t} -  \frac{d}{h_\infty^2}\tilde{U}_{yy} = f(\tilde{U}), & t \in \mathbb{R}, \ y \in [0, 1], \\
				\tilde{U}(t, 0) = \delta_{0},\quad \tilde{U}(t, 1) = \delta, & t \in \mathbb{R}, \\
				\tilde{U}_{y}(t, 1) = 0, & t \in \mathbb{R}.
			\end{array}\right.
		\end{equation}
		From	 Lemma \ref{le2.8}, we know that  $\tilde{U}(t, y) \geq \delta$. Since $ \underline{U} \equiv \delta $ is a strict lower solution to \eqref{02.31}, the strong maximum principle implies that  $\tilde{U}(t, y)>\delta $ for  $t \in \mathbb{R}$  and $ y \in\left(0, 1\right)$. The Hopf boundary lemma then infers $\tilde{U}_{y}\left(t, 1\right)<0 $, which contradicts the third equation in \eqref{02.31}. Therefore we must have $h_\infty=\infty$. This completes the proof of the lemma.
	\end{proof}

	We now consider the case  $0 \leq \delta_0 < \delta$, where $T_{\text{max}}$ might  be finite, and it is not clear that $h(t)$ is monotonic after some finite time. Recall that we always assume  $f$ satisfies ${\bf (F)}$. 
	\begin{lemma}\label{lemma2.10}
		If  $0\leq\delta_{0}<\delta$, then the following conclusions are equivalent:
		\begin{itemize}
			\item[\rm (i)]\ \ 
			$T_{\text{max}}<\yy$,
			\item[\rm (ii)]\ \  $ \lim_{t\to T_{\text{max}}^{-}} h(t)=0$,
			\item[\rm (iii)]\ \  $ \liminf_{t\to T_{\text{max}}^{-}} h(t)=0$,
			\item[\rm (iv)] \ \ $
			\displaystyle \int_{0}^{h(t_{0})}\!\!\!x u(t_{0}, x) d x<d(\delta-\delta_{0})/C_{f}\ \ {\rm for\ some}\ t_{0}\in[0,T_{\text{max}}),$
			where $C_{f}= \sup_{u \in (0,1]} \frac{f(u)}{u}$.
		\end{itemize}
	\end{lemma}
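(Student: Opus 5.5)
I will prove the cycle (i)$\Rightarrow$(iii)$\Rightarrow$(iv)$\Rightarrow$(ii)$\Rightarrow$(i), or something close to it.

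\emph{(i)$\Rightarrow$(iii).} Suppose $T_{\max}<\infty$. If $\liminf_{t\to T_{\max}^-}h(t)>0$, then $\inf_{(0,T_{\max})}h>0$, since $h$ is continuous and positive on $[0,T_{\max})$. Lemma \ref{lemma2.8} would then give $T_{\max}=\infty$, a contradiction. Hence (iii) holds.

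\emph{(iii)$\Rightarrow$(iv).} By Lemma \ref{le2.5}, $V(t)\le \frac{C_1}{2}h(t)^2$. Along a sequence $t_n\to T_{\max}$ with $h(t_n)\to 0$ we get $V(t_n)\to0$, so $V(t_n)<d(\delta-\delta_0)/C_f$ for large $n$. This is exactly (iv) with $t_0=t_n$.

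\emph{(iv)$\Rightarrow$(i) and (ii).} Assume (iv). By Lemma \ref{th2.6}(ii), $V$ is decreasing on $[t_0,T_{\max})$ and $V(t)<d(\delta-\delta_0)/C_f$ there. From the identity \eqref{V'} we have
\[
V'(t)\le d(\delta_0-\delta)+C_fV(t)\le d(\delta_0-\delta)+C_fV(t_0)=:-\eta<0
\qquad\text{for } t\ge t_0 .
\]
One point needs care: $u$ may exceed $1$, and then $f(u)\le C_fu$ must still hold. This is fine, because under $(\bf F)$ we have $f(u)<0$ for $u>1$. Since $V>0$ while the solution exists, $V(t)\le V(t_0)-\eta(t-t_0)$ forces $T_{\max}\le t_0+V(t_0)/\eta<\infty$, which is (i).

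For (ii), given (i), we need $h(t)\to0$ and not only along a subsequence. Suppose instead that $\limsup_{t\to T_{\max}^-} h(t)=:2\epsilon>0$. Since $\liminf h=0$, $h$ would oscillate between $\epsilon$ and small values infinitely often near $T_{\max}$. I would rule this out with the lower bound on $h'$ from Step 1 of Lemma \ref{le2.7}, proved there with constants depending only on a lower bound for $h$. Concretely, I would rerun that barrier argument on an interval where $h\ge\epsilon/2$; the times involved are bounded by $T_{\max}$, so Lemma \ref{le2.5} gives a uniform positive lower bound for $u$ on $[\epsilon/4,h(t)]$. This yields $|h'|\le C(\epsilon)$ on each excursion. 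Then $h$ needs time at least $\epsilon/(2C(\epsilon))$ to climb from $\epsilon/2$ to $\epsilon$, and infinitely many such climbs cannot fit into a finite interval.

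An alternative that avoids this is cleaner if it works: show directly that $V(t)\to0$ as $t\to T_{\max}$, then combine it with a lower bound $V\ge c\,h^2$ near the front. Such a bound would come from $u\ge$ const near $x=h(t)$, since $u=\delta$ there and $u_x$ is controlled. I expect this step, upgrading $\liminf$ to $\lim$, to be the main obstacle; the localized version of Lemma \ref{le2.7} is what I would try first.

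Finally, (ii)$\Rightarrow$(iii) is trivial, and (iii) has already been shown to imply (iv) and hence (i). This closes the equivalences.
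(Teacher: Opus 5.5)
Your implications (i)$\Rightarrow$(iii), (iii)$\Rightarrow$(iv), (iv)$\Rightarrow$(i) and (ii)$\Rightarrow$(iii) are correct and match the paper's. Your (iv)$\Rightarrow$(i) even yields the explicit bound $T_{\max}\le t_0+V(t_0)/\eta$.

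The gap is the step you flagged yourself, upgrading $\liminf h=0$ to $\lim h=0$, and the excursion argument does not close it.

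First, the lower bound in Lemma \ref{le2.5} assumes $\inf_{[0,T)}h>0$ from time $0$. That is exactly what fails when $h$ keeps dipping towards $0$.

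Second, and decisively, suppose you restart the barriers of Lemma \ref{le2.7} at the start $a_n$ of the $n$-th excursion. You then need, near $x=h(a_n)$, the initial comparison $\underline u(a_n,\cdot)\le u(a_n,\cdot)$ together with $h'(a_n)>-dm$, and also $\bar u(a_n,\cdot)\ge u(a_n,\cdot)$ for the upper bound. In that proof these come from choosing $m$ and $M$ large compared with $\max|u_0'|$; here that means large compared with $\max|u_x(a_n,\cdot)|$ near the front. Nothing available bounds this uniformly in $n$. So you get no $C(\epsilon)$ independent of the excursion, and the time-counting argument collapses.

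Your fallback is circular for the same reason: controlling $u_x$ at the front is the same as controlling $h'$, since $u_x(t,h(t))=-\frac{\delta}{d}h'(t)$.

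The missing idea is to bound $u$ from below on a fixed interval away from the front, using a subsolution that ignores the dips of $h$.

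\begin{itemize}
\item Take $\tilde f\in C^1$ with $\tilde f(0)=0$, $\tilde f<0$ on $(0,\infty)$ and $\tilde f\le f$. Let $\tilde w$ solve $\tilde w_t=d\tilde w_{xx}+\tilde f(\tilde w)$ on $(0,L)$, with $L>\max\{h_0,\epsilon\}$ and zero Dirichlet data.
\item Choose initial data with $\tilde w_0>0$ on $(0,L)$, $\tilde w_0<u_0$ on $(0,h_0)$ and $\max\tilde w_0<\delta$. Then $\tilde w\le\delta$.
\item Compare on the moving domain $\{0\le x\le \min\{h(t),L\}\}$. This gives $u\ge\tilde w$ there for all $t<T_{\max}$.
\item Because $T_{\max}<\infty$, $m_\epsilon:=\min_{[0,T_{\max}]\times[\epsilon/2,\epsilon]}\tilde w>0$. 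Hence $V(t)\ge 3m_\epsilon\epsilon^2/8$ whenever $h(t)\ge\epsilon$, which happens along times tending to $T_{\max}$ since $\limsup h=2\epsilon$.
\item On the other hand, $V(s_n)\le \frac{C_1}{2}h(s_n)^2\to0$ along times with $h(s_n)\to0$. Once $V$ is below $d(\delta-\delta_0)/C_f$, Lemma \ref{th2.6}(ii) makes it decreasing, so $V(t)\to0$ as $t\to T_{\max}$.
\end{itemize}

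These two facts contradict each other, so $\limsup h>0$ is impossible. This is your ``cleaner alternative'', with the lower bound on $V$ supplied by $\tilde w$ instead of by gradient control at the front. It is how the paper proves (i)$\Rightarrow$(ii).
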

	\begin{proof}
		{\bf Step 1.} We prove that $T_{\text{max}}<\yy\implies \lim_{t\to T_{\text{max}}^{-}} h(t)=0$.

		Recall  from Lemma \ref{lemma2.8} that  
		\begin{align*}
			\inf_{t \in [0,T_{\text{max}})} h(t) > 0 \;\Longrightarrow\; T_{\text{max}} = \infty.
		\end{align*}
		Therefore,  $T_{\text{max}} < \infty$ implies  $\inf_{t \in [0,T_{\text{max}})} h(t) = 0$, and hence $\liminf_{t\to T_{\text{max}}^{-}} h(t)=0$.
		We now further prove $\lim_{t \to T_{\text{max}}^{-}} h(t) = 0$. Suppose by way of contradiction that $\limsup_{t \to T_{\max}^{-}} h(t) = \bar{h} \in (0, \infty]$. 
		Then there exist $\hat{h} \in (0, \bar{h})$ and a sequence $\{t_n\}$ satisfying $t_n \to T_{\max}$ such that $h(t_n) \geq \hat{h}/2$ for all sufficiently large $n$, say for all $n \geq N> 0$.

		Since $f$ satisfies  $(\bf{F})$, it is possible to choose  $\tilde{f}$ such that
		\[
	 \tilde{f} \in C^{1}, \quad \tilde{f}<0 \ \text{in}\ (0, \infty), \quad \tilde{f}(0)  = 0, \quad
			 \tilde{f} \leq f \ \text{in}\ [0,\infty).
		\]
		Consider the solution $\tilde w(t,x)$ of
		\begin{equation}\label{new3.5}
			\begin{cases}
				\tilde{w}_{t} = d \tilde{w}_{xx} + \tilde{f}(\tilde{w}), & t > 0,\ 0 < x < L, \\
				\tilde{w}(t, 0) = \tilde{w}(t, L) = 0, & t > 0, \\
				\tilde{w}(0, x) = \tilde{w}_0(x)>0, & 0 < x < L,
			\end{cases}
		\end{equation}
		where $\tilde{w}_0\in C^2([0,L])$ satisfies \[\tilde{w}_0(0)=\tilde{w}_0(L)=0,\, \max_{x\in[0, L]}\tilde{w}_0(x)<\delta \quad\text{and}\quad 0<\tilde{w}_0(x)<u_0(x) \quad\text{for}\quad x\in(0, h_0).\] 
		Fix $L>\max\{\hat{h}, h_0\}$ and define $\eta(t) = \min \{h(t), L\}$.  
	Since $\bar{w}\equiv \delta$ is an upper solution of \eqref{new3.5}, we easily obtain $\tilde{w}(t,x)\leq \delta$ for $t\geq0 $ and $x\in[0, L]$.
	Hence, due to $\tilde w(t, L)=0$, we always have $u(t,\eta(t))\geq \tilde{w}(t,\eta(t))$ and $u(t,0)=\delta_0\geq \tilde{w}(t,0)$ for $t\in[0, T_{\text{max}})$.
		Using the comparison principle to compare $\tilde{w}$ and $u$ over $\{(t,x): t\in[0,T_{\text{max}}),\ x\in [0, \eta(t)]\}$, we obtain
		$
		u(t, x) \geq \tilde{w}(t, x)$ in this region.
				Since
				\[
				m_T := \min\limits_{(t,x) \in [0, T_{\max}] \times [\hat{h}/4, \hat{h}/2]} \tilde{w}(t,x) > 0,
				\]
				it follows that
		\[
		u(t_n, x) \geq m_T \quad \text{for} \quad n \geq N \text{ and } x \in [\hat{h}/4, \hat{h}/2]\subset [0, h(t_n)].
		\]
		
		As $\liminf_{t \to T_{\max}^{-}} h(t) = 0$, there exists a sequence $\{s_n\}$ satisfying $s_n < t_n$ and $s_n \to T_{\max}$ such that $h(s_n) \to 0$ as $n \to \infty$. By Lemma \ref{le2.5}, we have
		\[
		0 \leq u(t,x) \leq C_1 \quad \text{for} \quad x \in [0, h(t)] \ \text{and} \ t \in [0, T_{\max}).
		\] 
		It follows that
		\[\begin{aligned}
			&V(t_{n})=\int_{0}^{h(t_{n})}xu(t_{n},x)dx\geq\int_{\hat{h}/4}^{\hat{h}/2}xm_Tdx\geq\frac{3m_T\hat{h}^{2}}{32} \text{ for } n\geq N,\\
			&V(s_{n})=\int_{0}^{h(s_{n})}xu(s_{n},x)dx\leq\int_{0}^{h(s_{n})}xC_{1}dx=\frac{C_{1}}{2}h^{2}(s_{n})\to 0 \,\,\text{as} \,\,n\to \infty.
		\end{aligned}\]
		Choose $n_0 > N$ such that $V(s_{n_0}) < d(\delta - \delta_0)/C_f$. By Lemma \ref{th2.6}, we then have $V'(t) < 0$ for all $t \in ( s_{n_0}, T_{\rm max})$. Hence  $0<V(t_n) < V(s_n)$ for $n > n_0$, which implies $V(t_n)\to 0$ as $n\to\infty$,  a contradiction.	Therefore, $\lim_{t \to T_{\text{max}}^{-}} h(t) = 0$, as desired.

		{\bf Step 2.} {If $\liminf_{t\to T_{\text{max}}^{-}} h(t)=0$, we prove $	T_{\text{max}}<\yy$.}

		We suppose that $T_{\text{max}}=\infty$. Then there exist $t_{n}\to \infty$ such that  $h(t_{n})\to 0$ as $n\to \infty$. By the boundedness of $u(t,x)$, we can choose $n$ big enough such that 
		\[
		V(t_n) = \int_0^{h(t_n)} x u(t_n, x)  dx <\frac 12 \frac{d(\delta - \delta_0)}{C_f}.
		\] 
		From the proof of Lemma \ref{th2.6} we know
		\[
		V'(t)\leq -d(\delta-\delta_{0})+C_{f}V(t) \mbox{ for } t>0.
		\]
		It follows that
		\[
		V(t) - \frac{d(\delta - \delta_0)}{C_f}\leq e^{C_f (t - t_n)} \left[ V(t_n) - \frac{d(\delta - \delta_0)}{C_f} \right]  \to -\infty \quad \text{as} \quad t \to \infty,
		\]
		which contradicts  the fact that $V(t)\geq 0$ for all $t\geq 0$. Hence we must have  $T_{\text{max}}<\infty$.
		\medskip
		
		{\bf Step 3.} {If $T_{\text{max}}<\yy$, we prove $\displaystyle\int_{0}^{h(t_{0})}x u(t_{0}, x) d x<d(\delta-\delta_{0})/C_{f}\ \ {\rm for\ some}\ t_{0}\in[0,T_{\text{max}})$.}\medskip
		
		By Step 1, $T_{\mathrm{max}} < \infty$ implies 
		$\lim_{t \to T_{\mathrm{max}}^{-}} h(t) = 0$. Direct computation yields
		\[
		V(t) = \int_{0}^{h(t)} \! x u(t,x)  \mathrm{d}x \leq \int_{0}^{h(t)} \! x C_{1}  \mathrm{d}x 
		= \frac{C_{1}}{2} h^{2}(t) \to 0 \quad \text{as} \quad t \to T_{\mathrm{max}},
		\]
		where $C_{1}$ is given by Lemma \ref{le2.5}. Consequently, 
		there exists $t_{0} \in (0, T_{\mathrm{max}})$ satisfying
		\[
		\int_{0}^{h(t_{0})} \! x u(t_{0}, x)  \mathrm{d}x < \frac{d(\delta - \delta_{0})}{C_{f}}.
		\]
		
		{\bf Step 4.} {If $\displaystyle \int_{0}^{h(t_{0})}x u(t_{0}, x) d x<d(\delta-\delta_{0})/C_{f}\ \ {\rm for\ some}\ t_{0}\in[0,T_{\text{max}})$, we prove $T_{\text{max}}<\yy$.}
		
		Assume $T_{\mathrm{max}} = \infty$, and recall from the proof of Lemma \ref{th2.6} that
		\[
		V'(t) \leq -d(\delta - \delta_{0}) + C_{f} V(t) \quad\text{for}\quad t>0.
		\]
		Then as in Step 2 above we obtain
		\[
		V(t) \leq e^{C_{f}(t-t_{0})} \left[ V(t_{0}) - \frac{d(\delta - \delta_{0})}{C_{f}} \right] + \frac{d(\delta - \delta_{0})}{C_{f}} <0 \quad \text{for all large} \quad t,
		\]
		which is a contradiction to $V(t)>0$.
		This completes the proof.
	\end{proof}
	
	The following two lemmas consider the behaviour of $h(t)$ when $T_{max}=\yy$.
	\begin{lemma}\label{lemma3.4}
		Suppose $0\leq\delta_{0}<\delta\in (\hat\theta_f, 1)$, and $(u, h)$ is the unique solution of \eqref{1.1} with maximal existence time    $T_{max}=\yy$. 
		Then  $h(t)$ exhibits one of the following asymptotic behaviours:
		\begin{itemize}\item[{\rm (i)}] 
			$h(t)$ is bounded away from 0 and $\infty$, namely
			\[
			0 < \liminf_{t \to \infty} h(t) \leq \limsup_{t \to \infty} h(t) < \infty
			\]
			
			\item[{\rm (ii)}] 
			
			$h(t)$ goes to infinity, i.e.,
			\[
			\lim_{t \to \infty} h(t) = \infty.
			\]
		\end{itemize}

	\end{lemma}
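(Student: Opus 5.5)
By Lemma \ref{lemma2.10}, $T_{\max}=\infty$ forces $\liminf_{t\to\infty}h(t)>0$, because (iii) of that lemma fails. Since $h$ is continuous and positive, this gives $l:=\inf_{t\ge0}h(t)>0$. Only two scenarios then remain to separate: either $\limsup_{t\to\infty}h(t)<\infty$, which is (i), or $\limsup_{t\to\infty}h(t)=\infty$. In the second case I will show $\lim_{t\to\infty}h(t)=\infty$. The task therefore reduces to excluding oscillation between a finite $\liminf$ and an infinite $\limsup$. The tools are the lower bound $u(t,x)\ge\tilde q_c(x+x_0)$ from Lemma \ref{nlower}, valid for $t\ge t_0$, together with a spreading-type lower solution combined with a barrier argument at the free boundary.

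\textbf{Step 1 (a robust interior bump).} Fix $\varepsilon>0$ small so that $\delta-\varepsilon>\hat\theta_f$. Since $\tilde q_c(\infty)=\delta$, pick $X>0$ with $\tilde q_c(x+x_0)\ge\delta-\varepsilon$ for $x\ge X$. Since $\delta-\varepsilon>\hat\theta_f$ (so $\int_0^{\delta-\varepsilon}f>0$ in the bistable and combustion cases, and trivially in the monostable case), standard results give the following for $R$ large. Let $\psi$ solve $\psi_t=d\psi_{xx}+f(\psi)$ on $(X,X+2R)$ with zero Dirichlet data and initial data $\psi_0$, a fixed profile with $0\le\psi_0\le\delta-\varepsilon$ that equals $\delta-\varepsilon$ on a long middle interval. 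Then $\psi$ converges to a positive steady state whose maximum is close to $1>\delta$. Hence there is a time $T_R$, independent of any later choices, such that for $t\ge T_R$ the superlevel set $\{\psi(t,\cdot)>\delta\}$ contains a fixed interval $J_R$ centred at $X+R$ of length comparable to $R$. Moreover, at the right end of $\{\psi(t,\cdot)\ge\delta\}$ we have $\psi_x<0$, since the steady state is symmetric and decreasing to the right of its peak.

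\textbf{Step 2 (comparison and barrier).} Assume $\limsup h=\infty$, and pick $t_n\ge t_0$ with $h(t_n)\ge X+2R$. Then $u(t_n,\cdot)\ge\psi_0$ on $[X,X+2R]$ by Lemma \ref{nlower}. Compare $u(t_n+s,x)$ with $\psi(s,x)$ on the region $\{X\le x\le\min(h(t_n+s),X+2R)\}$. At $x=X$ we have $u>0=\psi$. At $x=X+2R$ (when $h$ lies beyond it) we have $\psi=0<u$. At $x=h(t_n+s)<X+2R$ we have $u=\delta$, and this is $\ge\psi$ as long as $h(t_n+s)$ has not entered $\{\psi(s,\cdot)\ge\delta\}$.

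Let $\tau$ be the first time at which $h(t_n+\tau)$ touches the closure of this set from the right. Up to time $\tau$ the comparison gives $u\ge\psi$, with equality $u=\psi=\delta$ at $x=h$. This yields $u_x(t_n+\tau,h)\le\psi_x(\tau,h)<0$, so $h'(t_n+\tau)=-\frac d\delta u_x>0$, and the front cannot be moving left into the set. It follows that $h(t_n+s)\ge\sup J_R$ for all $s\ge T_R$. For $s<T_R$, the bound $|h'|\le C_2$ from Corollary \ref{c3.2} keeps $h\ge X+2R-C_2T_R$; enlarging $R$ relative to $T_R$ is the delicate point, and otherwise I simply use that the set $\{\psi\ge\delta\}$ is empty or small for $s<T_R$. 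Hence $\liminf h\ge\sup J_R\ge X+R/2$, say.

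\textbf{Step 3 (conclusion).} Since $R$ is arbitrary, $\liminf h=\infty$ whenever $\limsup h=\infty$, which establishes the dichotomy. The main obstacle is Step 2. Two things must be made rigorous there. The first is the moving-domain comparison when the right boundary of the comparison region switches between $h(t)$ and $X+2R$, and the set $\{\psi\ge\delta\}$ appears at a positive time. The second is the strict sign $\psi_x<0$ at the relevant boundary point for all $s$, and not only asymptotically. For the latter I would first try replacing $\psi$ by a time-monotone lower solution, namely a stationary subsolution bump, for which both the superlevel set and the sign of $\psi_x$ are time-independent.
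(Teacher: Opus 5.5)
Your reduction is right: Lemma \ref{lemma2.10} gives $\inf_{t\ge0}h(t)>0$, so only $\liminf h<\limsup h=\infty$ has to be excluded, as in the paper. Step 2, however, does not close as written, and it is the crucial step. The set $\{\psi(s,\cdot)\ge\delta\}$ is not static: it is empty for small $s$, and its right endpoint $r(s)$ keeps moving right as $\psi$ grows towards its limit. At the first contact time you do get $u_x\le\psi_x<0$ at $x=h$, hence $h'>0$. This is not a contradiction, because contact can happen when $r(s)$ catches up with the front ($r'\ge h'>0$), not when the front moves left into the set. After such a contact the boundary inequality $\psi(s,h(t_n+s))\le\delta$ fails, so the comparison cannot be continued. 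The claim $h(t_n+s)\ge\sup J_R$ for $s\ge T_R$ is therefore unjustified.

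Your treatment of $s<T_R$ is also off: enlarging $R$ relative to $T_R$ is circular, since $T_R$ depends on $R$. Since $T_R$ is fixed before $t_n$ and $\limsup h=\infty$, simply choose $t_n$ with $h(t_n)\ge X+2R+C_2T_R$. Then $h\ge X+2R$ on $[t_n,t_n+T_R]$, and the comparison with $\psi$ runs on a fixed rectangle. This is what the paper does via $h(t_n)-C_2(T+2T_1)>L^*+L$.

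The remedy you mention at the end works once it is assembled. After this first stage, freeze to a stationary subsolution $\phi$ with the following properties: $\phi\le u(t_n+T_R,\cdot)$ on $[X,X+2R]$, $\phi=0$ at the ends, $\max\phi>\delta$, and $\phi$ decreasing to the right of its peak. One choice is $\phi=\psi(T_R,\cdot)$ when $\psi_0$ is itself a stationary subsolution, since then $\psi_s\ge0$; another is a steady-state bump swept below the limit of $\psi$. Let $b_R>X+R$ be the point right of the peak with $\phi(b_R)=\delta$, and let $\tau$ be the first time after $t_n+T_R$ with $h(\tau)=b_R$. Comparison on $\{X\le x\le\min(h(t),X+2R)\}$ gives $u(\tau,\cdot)\ge\phi$ on $[X,b_R]$ with equality at $b_R$. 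Hence $u_x(\tau,b_R)\le\phi'(b_R)<0$, i.e.\ $h'(\tau)>0$, whereas reaching $b_R$ from above forces $h'(\tau)\le0$. So $h>b_R$ for all $t\ge t_n+T_R$, and letting $R\to\infty$ gives $\lim h=\infty$.

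This repaired barrier argument is a genuinely different route from the paper's. The paper uses $W_L$ with boundary data $\theta^{**}$ to get $u(t,L^*+L/2)\ge\delta$. An ODE comparison then pushes $u\ge\delta$ onto all of $[L^*+L/2,h(t)]$, the strong maximum principle and Hopf lemma give $h'>0$, and a continuation argument makes $h'>0$ for all large $t$. The paper thus obtains eventual monotonicity of $h$; yours gives only a lower bound, which is all the lemma needs.
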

	
	\begin{proof} Since $f$ satisfies $(\bf{F})$, the function $w_*(x)$ satisfying the following identity is well-defined:
		\begin{equation}\label{w}
			\int_{w_*(x)}^\delta \frac{du}{\sqrt{\frac{2}{d}\int_{u}^{\delta} f(s) \, ds}}du =l_*-x \mbox{ for } x\in [0, l_*],
		\end{equation}
		where $l_*$ is given by \eqref{l*}, namely,
		\begin{equation*}
			l_* = \int_{\delta_{0}}^{\delta} \frac{dx}{\sqrt{\frac{2}{d}\int_{x}^{\delta} f(s)}ds}  \in (0, \infty).
		\end{equation*}
		
		Moreover, it is easy to check that $w=w_*$ satisfies
		\begin{equation}\label{2.32}
			\begin{cases}
				d w'' + f(w) = 0 \mbox{ for } x \in (0, l_*), \\
				w'(x) > 0 \mbox{ for }  x \in [0, l_*),\\
				w(0) = \delta_{0}, \quad w(l_*) = \delta, \quad w'(l_*) = 0.
			\end{cases}
		\end{equation}

		Clearly $(u(t,x), h(t)) \equiv (w_*(x), l_*)$ constitutes a steady-state solution of \eqref{1.1}.
		
		Now let $(u, h)$ be a solution of \eqref{1.1} with  $T_{\mathrm{max}} = \infty$. By Lemma \ref{lemma2.10} we have
		\begin{equation}
			V(t) \geq \frac{d(\delta - \delta_{0})}{C_{f}} \quad \text{for } t \in [0, \infty).
		\end{equation}
		Combining this with the estimate in Lemma \ref{le2.5}, we derive
		\begin{equation}
			\frac{C_{1}h^{2}(t)}{2} \geq \int_{0}^{h(t)} x u(t,x) \, dx \geq \frac{d(\delta - \delta_{0})}{C_{f}},
		\end{equation}
		and hence
		\begin{equation}\label{n3.7}
			h(t) \geq \sqrt{\frac{2d(\delta - \delta_{0})}{C_{f}C_{1}}}>0\quad \forall\, t > 0.
		\end{equation}
		
		Therefore we have the following three possibilities for $h(t)$:
		\begin{enumerate}[label=(\arabic*)]
			\item Bounded: $\displaystyle 0 < \liminf_{t \to \infty} h(t) \leq \limsup_{t \to \infty} h(t) < \infty$.
			\item Semi-unbounded: $\displaystyle 0 < \liminf_{t \to \infty} h(t) < \limsup_{t \to \infty} h(t) = \infty$.
			\item Fully unbounded: $\displaystyle \lim_{t \to \infty} h(t) = \infty$.
		\end{enumerate}
		
		To complete the proof of the lemma, it suffices to eliminate possibility (2). Suppose for contradiction that (2) happens; we are going to derive a contradiction in two steps.

	By Lemma \ref{nlower}, there exists a constant $t_0 \geq 0$,  such that 
		\[u(t,x)\geq \tilde{q}_c(x+x_0)\ \mbox{ for }\ t\geq t_0 \mbox{ and } x\in[0, h(t)].\]
		Since $\tilde{q}_c(x)\to \delta$ as $x\to \infty$,  there exists $L^*>0$ such that $\tilde{q}_c(x+x_0)\geq \theta^{**}>\hat\theta_f$ for $x\geq L^*$. Hence 
		\begin{equation}\label{uxj}
			u(t,x)\geq \tilde{q}_c(L^*+x_0)=\theta^{**} \ \mbox{ for }\ t\geq t_0,\ x\in[L^*,h(t)] \ \mbox{ if }\ h(t)\geq L^*.
		\end{equation}
		We will need the solution $W = W_{L}(t,x)$ of the  problem
		\begin{equation}\label{2.33}
			\begin{cases}
				W_{t} = d W_{xx} + f(W), & t > 0,\ L^* < x < L+L^*, \\
				W(t, L^*) = W(t, L+L^*) = \theta^{**}, & t > 0, \\
				W(0, x) = \theta^{**}, & L^* \leq x \leq  L+L^*,
			\end{cases}
		\end{equation}
		with a sufficiently large $L$. 
		
		{\bf Step 1}. We prove the existence of positive constants $L$ and $T$ such that $W_{L}(t,L^*+L/2)>\delta$ for $t\geq T$. 
		
		Since $\underline{W} \equiv \theta^{**}$ is a lower solution of \eqref{2.33} and  $\bar{W} \equiv 1$ is an upper solution of \eqref{2.33}, we deduce that  $W_{L}(t,x)$ is nondecreasing in $t$, and $\theta^{**}\leq W_{L}(t,x)\leq1$ for $t>0$, $x\in [L^*,L+L^*]$. It follows that 
		\[W_{L}^{\infty}(x):=\lim _{t \rightarrow \infty} W_{L}(t,x) \ \text{ exists, and}\ \theta^{**}\leq W_{L}^{\infty}(x)\leq1 \quad\text{for } x\in [L^*,L+L^*].\]
		Moreover, it is easy to see, by standard parabolic regularity, that the above limit holds in   $C^{2}([L^*,L+L^*])$,  and  $W_{L}^{\infty} $ satisfies
		\[-d\left(W_{L}^{\infty}\right)^{\prime \prime}(x)=f\left(W_{L}^{\infty}\right)\quad\text{in}\quad (L^*,L+L^*), \quad W_{L}^{\infty} (L^*)= W_{L}^{\infty}( L^*+ L)=\theta^{**} .\]
		Since $\theta^{**}\in (\hat\theta_f, 1)$ and $f$ satisfies ${\bf (F)}$, a simple shooting argument indicates that $W_L^\infty(x)$ is symmetric about $x=L^*+\frac L2$, and
		\[(W_{L}^{\infty})^{\prime}( L^*+ L/2)=0,\,(W_{L}^{\infty})^{\prime}( x)> 0 \,\,\text{in}\,\, [L^*,L^*+L/2)\, .\]
		Additionally, by the upper and lower solution methods, $W_{L} $ is  increasing with respect to $ L$.  Therefore
		\[W_{\infty}(x):=\lim _{L \rightarrow \infty} W_{L}^{\infty}(x) \text { exists for every } x \in [L^*, \infty), \, \,\text{with}\,\,(W_{\infty})^{\prime}(x)\geq0.\]
		By standard elliptic regularity, the above limit holds in $ C_{\text {loc }}^{2}([L^*, \infty))$, and
		\begin{equation}
			-d W_{\infty}^{\prime \prime}=f\left(W_{\infty}\right),\,W_{\infty}(L^*)=\theta^{**}, \,\theta^{**}< W_{\infty} \leq 1, \,(W_{\infty})^{\prime}(x)\geq0\,\,  \text { for } \,\,x \in (L^*, \infty).
		\end{equation}
		It then follows easily that $W_\infty(\infty)=1$, and $v=W_\infty$ is the unique solution of \eqref{1.3} with $\delta_0$ replaced by $\theta^{**}$.
		Consequently, for sufficiently large $L$, we have  $W_{L}^{\infty}(L^*+L/2)>\delta$. We may then take $T$ sufficiently large such that $W_{L}(t,L^*+L/2)>\delta$ for $t\geq T$.

		{\bf Step 2}: We prove that \( h'(t) > 0 \) for all large $t>0$. Clearly this is a contradiction to the assumption that (2) happens.
		
		Since by assumption $\limsup_{t \to \infty} h(t) = \infty$, there exists a sequence $\{t_{n}\}$ such that $t_{n} \to \infty$ and $h(t_{n}) \to \infty$ as $n \to \infty$. From Corollary \ref{c3.2}, we know that $|h'(t)|\leq C_{2}$. Hence, for $t>t_n$ we have
		\[
		h(t)\geq h(t_n)-C_2(t-t_n).
		\]
		In particular, for any given $T_1>0$ and all large $n$, say $n\geq n(T_1)$,
		\[
		h(t)\geq h(t_n)-C_2(T+2T_1)>L^*+L \mbox{ for } t\in[t_{n},\,t_{n}+T+2T_{1}].
		\]
		
		Consider the following ODE problem:
		\begin{equation*}
			\left\{\begin{array}{ll}
				V'(t)=f(V),\quad\text{for } t>0,\\
				V(0)=\theta^{**}.
			\end{array}\right.
		\end{equation*}
		It follows from $(\bf{F})$ that  $V(t)$ increases to $1$ as $t\to \infty$. Thus, there exists $T_1=T_1(\theta^{**},\delta)>0$ such that $V(T_1)=\delta$.
		
	By \eqref{uxj}, for $n\geq n(T_1)$, we have
	$	u(t,x)\geq \theta^{**}$ for $t\in[t_{n},\,t_{n}+T+2T_{1}]$ and $ x\in[L^*,h(t)]$.
		Therefore, for fixed $n\geq n(T_1)$ such that $t_{n}> \max \{ T,T_1\}$, we can easily verify that $v(t,x):=u(t_{n}+t,x)$ is an upper solution of \eqref{2.33} for $t\in [0, T+2T_1]$ and $x\in[L^*,L+L^*]$. Consequently, by the comparison principle and Step 1,
		\begin{equation}
			\label{L/2}
			v(t,L^*+ L/2)\geq W_L(t, L^*+L/2)\geq \delta  \quad\text{for } t\in [T,\,T+2T_1],
		\end{equation}
		and $v$ satisfies 
		\begin{equation*}
			\left\{\begin{array}{ll}
				v_{t}=d v_{x x}+f(v), & T<t<T+2T_{1},\, L^*+L/2<x<h(t+t_n), \\
				v(t, L^*+L/2)\geq \delta,\,v(t, h(t+t_n))=\delta, & T<t<T+2T_{1}, \\
				v(T, x)\geq\theta^{**}, & L^*+L/2<x<h(t_{n}+T).
			\end{array}\right.
		\end{equation*}
		This allows us to use the  comparison principle
		to conclude that
		\[u(t+t_{n},x)=v(t,x)\geq V(t-T) \quad\text{for } t \in[T,T+T_{1}] \text{ and } x\in[L^*+L/2,\,h(t_{n}+t)] .\] In particular, we have \[u(t_{n}+T+T_{1},x)\geq V(T_{1})=\delta \quad \text{for } x\in[L^*+L/2,\,h(t_{n}+T+T_{1})].\]
		Note that by \eqref{L/2} we have
		\[u(t,L^*+L/2)\geq \delta \quad\text{for } t\in [t_{n}+T+T_{1},\,t_{n}+T+2T_{1}].\] 
		Hence we can use the strong maximum principle  to deduce 
		\[
		u(t,x)> \delta \quad\text{for } t\in (t_{n}+T+T_{1},\,t_{n}+T+2T_{1}] \text{ and } x\in( L^*+L/2,\,h(t)).\]
		It then follows from the Hopt boundary lemma that 
		\begin{equation}\label{h'>0}
			h'(t)=-\frac{d}{\delta}u_{x}(t,h(t))> 0 \quad\text{for } t\in(t_{n}+T+T_{1},t_{n}+T+2T_{1}].
		\end{equation}

		For the above fixed \( t_n \), define  
		\begin{align*}
			T_2 := \sup \{ s : h'(t) > 0 \ \text{for all} \ t \in (t_n + T + T_1, s] \}.
		\end{align*}
		By \eqref{h'>0} we see that \( T_2 \) is well defined, and  \( T_2 \geq t_n + T + 2T_1 \).
		
		If \( T_2 < \infty \), then we must have \( h'(T_2) = 0 \). Since \( h'(t) > 0 \) for \( t \in [t_n + T + T_1, T_2)\), it follows that  
		\[h(t) >h(t_n+T+T_1) > L^*+ L \quad\text{for}\quad t \in [t_n + T + T_1, T_2] .\] 
		This allows us to repeat the argument leading to \eqref{L/2} to obtain 
		\( u(t, L^*+L/2) \geq \delta \) for \( t \in [t_n + T + T_1, T_2] \).  
		This in turn allows us to repeat the argument leading to \eqref{h'>0} to deduce
		\[
		h'(t) > 0 \mbox{ for } t\in (t_n+T+T_1, T_2],
		\]  
		which contradicts \( h'(T_2) = 0 \). Therefore, \( T_2 = \infty \) and \( h'(t) > 0 \) for all \( t > t_n + T + T_1 \), as desired.
		
		The above analysis shows that case (2) never happens, completing the proof of the lemma.
	\end{proof}

	If \(0\leq\delta_0<\delta<1\), $T_{\rm max}=\infty$ and \( h(t) \) is bounded away from 0 and $\infty$, we will show that actually \( \lim_{t \to \infty} h(t) \) exists. The proof of this fact is rather involved and requires  new ideas  well beyond those in \cite{DM, DL} for similar questions for \eqref{A} and \eqref{eq-kpp}, where the order-preserving property \eqref{order} holds.

	\subsection{Proof of \( \lim_{t \to \infty} h(t)=l_* \).}	The main purpose of this subsection is to sharpen the conclusion in part (i) of Lemma \ref{lemma3.4}, namely we prove the following result.
	
	\begin{theorem}\label{key1}
		If  $0\leq \delta_0 < \delta\in (\hat\theta_f, 1) $ and $0 < \displaystyle\liminf_{t \to \infty} h(t) \leq \limsup_{t \to \infty} h(t) < \infty$, then $\displaystyle\lim_{t \to \infty} h(t)=l_*$, where $l_*\in (0,\infty)$ is given by \eqref{l*}.
	\end{theorem}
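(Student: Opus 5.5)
Under the hypotheses of Theorem \ref{key1}, Lemma \ref{lemma2.8} gives $T_{\max}=\infty$. Lemma \ref{le2.5} and Corollary \ref{c3.2} then give uniform bounds for $u$ and $h'$ and a positive lower bound for $h$. The proof has two stages: an $\omega$-limit analysis, and a zero-number argument showing that $h$ cannot oscillate.

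\textbf{Stage 1: the limit along every sequence is $l_*$.} We pass to the variables $y=x/h(t)$ as in the proof of Lemma \ref{le3.1}. The $L^p$ estimates give compactness, so for any $t_n\to\infty$ a subsequence satisfies $h(t+t_n)\to H(t)$ and $U(t+t_n,\cdot)\to \tilde U(t,\cdot)$ locally uniformly. The limit $(\tilde U,H)$ is an entire solution of \eqref{1.1} with $a\le H(t)\le b$. Since $V$ is bounded and $V'(t)=d(\delta_0-\delta)+\int_0^{h}xf(u)\,dx$ has bounded derivative, $V$ is a bounded function with uniformly continuous derivative. Hence $\int_0^\infty$ of the oscillation of $V'$ is controlled only weakly, and this alone does not give convergence. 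So the goal of this stage is narrower: show that the only stationary solution of \eqref{1.1} is $(w_*,l_*)$.

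This uniqueness follows from a phase-plane argument. A stationary solution satisfies $w'(l)=0$ and $w(l)=\delta$, so the energy identity forces $w'=\sqrt{\tfrac2d\int_w^\delta f}$, and $w$ is monotone on its trajectory. Therefore the stationary solution is $w_*$, and its length is forced to be $l_*$.

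\textbf{Stage 2: ruling out oscillation.} This is the main obstacle. Suppose $\liminf h<\limsup h$; then $h(t)$ crosses $l_*$ infinitely often, or at least oscillates around some value. The plan is to compare $u$ with the slowly moving traveling wave pieces $v_L(t,x)=q_c(-x+ct+L)$ and their mirror images of speed $-c$, for small $c>0$. These are perturbations of the symmetric extension of $w_*$, with support length $l_c+l^c$ close to $2l_*$ and interior value $\delta$ at the peak.

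Using Angenent's zero-number result on $u(t,\cdot)-v_L(t,\cdot)$ over the common interval, the number of intersections is finite and nonincreasing. The boundary signs are known: $u(t,0)=\delta_0=v_L$ at the left endpoint of the wave piece when the two endpoints align, and at $x=h(t)$ we have $u=\delta\ge v_L$. Consider a time at which the peak $I(t)=ct+L$ crosses $h(t)$. There, comparing $u_x(t,h(t))$ with $v_L$, whose derivative vanishes at its peak, shows that $h'$ and $c$ stand in a definite order. Choosing $L$ so that the peak meets $h$ at an oscillation time produces a degenerate (multiple) zero of $u-v_L$, so the zero number must strictly drop. Since the zero number can drop only finitely often, eventually $h'(t)>c$ or $h'(t)<c$ whenever $h(t)$ equals the position of the moving peak. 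By the continuity of the family in $L$, this forces $h$ to be eventually monotone near each level in $(\liminf h,\limsup h)$, which contradicts oscillation. Letting $c\to0$ with the mirrored wave pieces as well yields the two-sided statement.

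\textbf{Conclusion.} Once $\lim h(t)=h_\infty$ exists, the argument of Lemma \ref{le3.1} gives $h'(t)\to0$. The $\omega$-limit is then a solution on a fixed interval with the overdetermined boundary condition $\tilde U_y(t,1)=0$. A further zero-number argument against $w_*$ (or a Lyapunov energy on the fixed domain) shows that this limit is stationary. By Stage 1 the stationary solution is $(w_*,l_*)$, so $h_\infty=l_*$.
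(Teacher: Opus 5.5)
Your Stage 1 and your Conclusion are sound: the phase-plane argument correctly shows that $(w_*,l_*)$ is the only stationary solution, and the identification of $\lim h$ is essentially Lemma \ref{l8}. There, $\tilde U_y(t,m)=0$ permits reflection across $x=m$, which makes $x=m$ an interior degenerate zero for all $t$. The substance of the theorem, however, lies in Stage 2, and that argument fails as written.

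\textbf{Failure of the Stage 2 zero-number step.} Put the peak of $v_L$ at $(s,h(s))$ with $h'(s)=0$. The degenerate zero you create then sits on the moving boundary $x=h(t)$. There $\eta=u-v_L$ equals $\delta-v_L(t,h(t))$, and this vanishes exactly at that moment. So the hypothesis $\eta(t,\xi_i(t))\neq0$ of Lemma \ref{lemma0} fails, zeros can enter or leave through $x=h(t)$, and nothing forces $\mathcal Z$ to drop. You also cannot reflect $u$ across $x=h(t)$, since $u_x(t,h(t))\neq0$ in general; that reflection is available only for the limit in Lemma \ref{l8}. Moreover, comparing $u_x(t,h(t))$ with $\partial_xv_L=0$ at the peak gives $\eta_x=-\frac{\delta}{d}h'(t)$. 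This orders $h'$ relative to $0$, not relative to $c$.

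\textbf{No asymptotic information.} Each fixed $v_L$ interacts with $[0,h(t)]$ only during a time window of length $(l_c+l^c)/c$. Hence ``the zero number drops only finitely often'' says nothing about $t\to\infty$. The appeal to ``continuity of the family in $L$'' to obtain eventual monotonicity near every level is not an argument.

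\textbf{What is missing.} You need a mechanism that makes a sign of $h'$ persist, plus the quantitative input $l_c>l_*$. The paper proves the two one-sided bounds separately.

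\emph{Upper bound.} For $\limsup h\le l_*$ (Lemma \ref{lemma1.2}) it uses one-sided, leftward-moving lower solutions built from $q_c$, with $l_c>l_*$ from Lemma \ref{lem-lc}. If $h$ ever reaches the peak $h_L^2$ before time $L/c$, the Hopf lemma gives $h'>0$ there. Re-anchoring the wave at the first later time where $h'$ could vanish shows $h'>0$ forever. Then $h$ converges to a limit exceeding $l_c>l_*$, contradicting Lemma \ref{l8}.

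\emph{Lower bound.} For $\liminf h\ge l_*$ (Lemma \ref{liminf-h}) it uses two-sided rightward pieces. These are first perturbed to have peak $\delta-\epsilon$, so that $\eta_\epsilon>0$ strictly at the right end, and this yields $\mathcal Z_\epsilon\equiv1$. That in turn gives $u<v_L$ on $[0,I(t))$ just before $h$ meets the peak with $h'<0$. A comparison-principle argument with re-anchored pieces $v_{L^*}$ then yields the persistence property \eqref{h-prop}. Combined with Lemma \ref{l8}, this excludes $\liminf h<l_*$.

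Your Stage 2 would have to be replaced by arguments of this kind; as proposed, it does not establish convergence of $h$.
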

	
	The proof of Theorem \ref{key1} will make use of some zero-number arguments based on the following result of Angenent \cite{Angenent} for  solutions to the parabolic equation	
	\begin{equation}\label{0001}
		\eta_{t}=a(t, x) \eta_{x x}+b(t, x) \eta_{x}+c(t, x) \eta \quad \text { in } Q:=\left\{(t, x) \mid \xi_{1}(t)<x<\xi_{2}(t), t \in\left(t_{1}, t_{2}\right)\right\}
	\end{equation}
	where  $\xi_{1}(t)$  and  $\xi_{2}(t)$  are continuous functions over $\left[t_{1}, t_{2}\right]$. For each  $t \in\left(t_{1}, t_{2}\right) $, denote by $	\mathcal{Z}(t)$
	the number of zeroes of  $\eta(t, \cdot)$  in the interval  ${\Omega}(t):=\left[\xi_{1}(t), \xi_{2}(t)\right] $. A point  $x_{0} \in {\Omega}(t)$  is called a  degenerate zero of  $\eta(t, \cdot) $ if  $\eta\left(t, x_{0}\right)=\eta_{x}\left(t, x_{0}\right)=0 $.
	
	\begin{lemma}\label{lemma0}{\rm (Angenent \cite{Angenent})}
		Assume the coefficients in \eqref{0001} satisfy
		\[	a, \frac 1a, a_{t}, a_{x}, b, c \in L^{\infty},
		\]
		and  $\eta\in W_{p}^{2,1}(Q)\cap C(\overline Q)$  for some $p>3$ is a solution of \eqref{0001}  and
		\begin{align*}
			\eta(t,  \xi_i(t)) \neq 0 \quad  {\rm for\ every }\ t \in (t_1, t_2),\ i=1,2.
		\end{align*}
		Then
		\begin{itemize}
			\item[{\rm (1)}] $ \mathcal{Z}(t)$  is finite and nonincreasing in  $t \in\left(t_{1}, t_{2}\right)$,
			\item[{\rm (2)}]  if  $s \in\left(t_{1}, t_{2}\right)$  and  $x_{0} \in {\Omega}(s)$  is a degenerate zero of  $\eta(s, \cdot) $, then  $\mathcal{Z}\left(s_{1}\right)>\mathcal{Z}\left(s_{2}\right)$  for all  $s_{1}$, $s_{2}$  satisfying  $t_{1}<s_{1}<s<s_{2}<t_{2} $.
		\end{itemize}
	\end{lemma}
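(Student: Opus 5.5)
This lemma is quoted from \cite{Angenent}; to prove it I would follow the classical Sturm--Matano--Angenent strategy, which is local in nature. The plan is to first localize away from the lateral boundaries, then normalize the equation near a zero, and finally analyze zeros by a local blow-up.

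First, localization. Since $\eta\in C(\overline Q)$ and $\eta(t,\xi_i(t))\neq 0$, for any compact subinterval $[s_1,s_2]\subset(t_1,t_2)$ continuity and compactness give $\epsilon>0$ with $|\eta|\geq\epsilon$ on a neighbourhood of each lateral boundary curve. So all zeros of $\eta(t,\cdot)$ with $t\in[s_1,s_2]$ lie in a compact subset of $Q$. It then suffices to work in small rectangles $(\tau-\rho,\tau+\rho)\times(x_0-\rho,x_0+\rho)\subset Q$ around each point. There I would change variables to reach a normalized equation $\eta_t=\eta_{xx}+\tilde c\,\eta$:
\begin{itemize}
\item remove $a$ by the time-dependent coordinate $y=\int^x a(t,s)^{-1/2}\,ds$, which is where $a,1/a,a_t,a_x\in L^\infty$ is used;
\item remove the first-order term by the gauge transform $\eta=e^{\phi}\tilde\eta$.
\end{itemize}
Both transforms preserve zeros and their multiplicities. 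The hypothesis $p>3$ gives $\eta,\eta_x$ H\"older continuous, so "degenerate zero" makes pointwise sense.

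Second, the local structure theorem, which I expect to be the main obstacle. At any zero $(\tau,x_0)$ of a nontrivial solution I would show that, for some integer $m\geq 1$,
\[
\eta(t,x)=C\,H_m\big(x-x_0,\tau-t\big)+o\big((|x-x_0|+|t-\tau|^{1/2})^m\big),
\]
where $C\neq0$ and $H_m$ is the heat polynomial of degree $m$ (a parabolically scaled Hermite polynomial).

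To prove this, I would do a backward self-similar blow-up $\eta(\tau+\lambda^2 s,x_0+\lambda z)$ and expand in Hermite functions in Gaussian-weighted $L^2$. I would use:
\begin{itemize}
\item a frequency/monotonicity (Almgren--Poon type) argument, which shows the rescaled solutions concentrate on a single eigenmode;
\item backward uniqueness/unique continuation (e.g.\ Carleman estimates), which exclude infinite-order vanishing, so $m<\infty$.
\end{itemize}
With only $L^\infty$ lower-order coefficients this is the delicate technical heart of the argument.

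Third, counting. Take $H_m(z,-s)$ with $s<0$, i.e.\ before the singular time. It has $m$ real simple zeros near $z=0$, whereas $H_m(z,0)=z^m$ has a single zero, which is degenerate when $m\geq2$. For $t>\tau$ the zeros of $H_m$ are complex, except for one real zero when $m$ is odd. By stability of simple zeros under small perturbation, the local number of zeros near $x_0$:
\begin{itemize}
\item equals $1$ at all nearby times when $m=1$;
\item drops from $m$ to $m\bmod 2$ across $t=\tau$ when $m\geq2$, a strict decrease.
\end{itemize}
Simple zeros move continuously by the implicit function theorem, and they cannot escape through the boundary because of the localization step.

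Finally, finiteness. Covering the compact zero set by finitely many such neighbourhoods gives $\mathcal Z(t)<\infty$ for every $t$. The degenerate zeros are isolated in time, which yields parts (1) and (2).
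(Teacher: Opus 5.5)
The paper does not prove this lemma. It takes Angenent's theorem for \emph{constant} lateral boundaries as a black box. It then only remarks that continuous $\xi_i(t)$ can be handled by covering $(t_1,t_2)$ with short overlapping intervals $I_j$, on which the boundaries are replaced by $\sup_{I_j}\xi_1$ and $\inf_{I_j}\xi_2$; continuity of $\eta$ and $\eta(t,\xi_i(t))\neq0$ ensure no zeros are gained or lost. Your localization step is exactly this reduction, so that part matches. Everything after it is a re-derivation of Angenent's theorem itself. The architecture (local heat-polynomial structure, local counting, isolated degenerate times) is the standard one, but the normalization step fails under the stated hypotheses.

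\textbf{The gap: the gauge transform cannot remove the drift.} After the substitution $y=\int^x a^{-1/2}$ you get $\eta_t=\eta_{yy}+\tilde b\,\eta_y+c\,\eta$ with only $\tilde b\in L^\infty$. Note that the substitution itself generates drift from $a_t$ and $a_x$, even when $b=0$. Writing $\eta=e^{\phi}\tilde\eta$ with $\phi_y=-\tilde b/2$ produces the new potential $c-\tfrac12\tilde b_y-\tfrac14\tilde b^2+\tfrac12\int^y\tilde b_t$. This requires $\tilde b_y$ and $\tilde b_t$, neither of which is available, and $\tilde\eta$ then leaves the $W^{2,1}_p$ class.

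The repair is to keep the drift. Under the scaling $\eta(\tau+\lambda^2 s,x_0+\lambda z)$ it becomes $\lambda\tilde b$, a vanishing perturbation. But then your frequency/Carleman step must be carried out with bounded measurable drift and potential. That is precisely the hard content of the result: in one space dimension it is a strong unique continuation theorem with heat-polynomial asymptotics, cf.\ X.-Y. Chen's work. Since you assert this step rather than prove it, your argument ultimately rests on a deep external result, just as the paper's does.

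Two further points need to be made explicit:
\begin{enumerate}
\item Counting zeros via stability of simple zeros and the implicit function theorem needs $C^1$-in-$z$ convergence of the rescaled functions to $C H_m$. This follows from interior $W^{2,1}_p$ estimates on rescaled cylinders, not from the $o\big((|x-x_0|+|t-\tau|^{1/2})^m\big)$ bound alone.
\item Finiteness of $m$ at every zero requires knowing that $\eta$ does not vanish to infinite order there. You must deduce this from $\eta(t,\xi_i(t))\neq0$ via unique continuation, not merely from ``nontriviality''.
\end{enumerate}
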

	In the original statement of Angenent \cite{Angenent}, the functions $\xi_1(t)$ and $\xi_2(t)$ are constants, but it is easily seen that the result remains valid when they are continuous functions; for example, one can  break the interval $(t_1, t_2)$ into the union of small overlapping intervals $I_j:=(t_1^j, t_2^j)\subset [t_1, t_2]$, and replace $(\xi_1(t), \xi_2(t))$ by  $(\sup_{I_j}\xi_1(t),\inf_{I_j}\xi_2(t))$ for $t$ in each $I_j$, and then apply Angenent's result for $t$ in every $I_j$, which will lead to the result as stated in Lemma \ref{lemma0} above.

	\begin{remark}\label{rk-nondege}	Lemma \ref{lemma0} implies that in the interval $(t_1, t_2)$ there can be at most finitely many time moments $s^j$ such that $\eta(s^j,\cdot)$ has a degenerate zero in $(\xi_1(s^j),\xi_2(s^j))$, since otherwise $\mathcal Z(t)$ would become negative for some $t$ close to $t_1$. This implies in particular that for some $\epsilon>0$ sufficiently small,  and every $t\in (t_1, t_1+\epsilon)\cup (t_2-\epsilon, t_2)$, $\eta(t,\cdot)$ has only nondegenerate zeros in the interval $(\xi_1(t), \xi_2(t))$. This observation will be used later.
	\end{remark}
	
	\begin{lemma}\label{l8}  
		If 
		$0 \leq \delta_0 < \delta \in (\hat\theta_f, 1)$ and $\lim_{t \to \infty} h(t)=m\in(0,\infty)$, then 
		\[
		m =l_* \mbox{ and}
		\qquad 
		\lim_{t \to \infty} \sup_{x \in [0, h(t)]} \left| u(t, x) - w_*(x) \right| = 0,
		\]
		where  $w_*$ and $l_*$ are given by \eqref{w} and \eqref{l*}, respectively.
	\end{lemma}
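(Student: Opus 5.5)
We follow the strategy of Lemma \ref{le3.1}: straighten the domain and pass to limits along time shifts, then identify the limit by uniqueness of the stationary problem. Set $y=x/h(t)$ and $U(t,y)=u(t,x)$, so that $U$ satisfies \eqref{3.1}. Since $h(t)\to m\in(0,\infty)$, we have $\inf h>0$, so Corollary \ref{c3.2} gives $|h'(t)|\le C_2$, and Lemma \ref{le2.5} gives $0<U\le C_1$. Then $L^p$ estimates on $[n,n+2]\times[0,1]$ and Sobolev embedding, exactly as in the proof of Lemma \ref{le3.1}, show that $h'$ is uniformly continuous on $[1,\infty)$. Together with $h\to m$, this forces $h'(t)\to0$. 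For any $t_n\to\infty$, a subsequence of $U_n(t,y)=U(t_n+t,y)$ converges in $C^{(1+\alpha)/2,1+\alpha}_{loc}(\mathbb R\times[0,1])$ to an entire solution $\tilde U$ of \eqref{02.31} with $h_\infty$ replaced by $m$ and the boundary value $\delta_0$ at $y=0$. Undoing the scaling, $\tilde u(t,x)=\tilde U(t,x/m)$ solves $\tilde u_t=d\tilde u_{xx}+f(\tilde u)$ on $\mathbb R\times[0,m]$ with $\tilde u(t,0)=\delta_0$, $\tilde u(t,m)=\delta$ and $\tilde u_x(t,m)=0$. Corollary \ref{c3.2} also gives $\tilde u\ge\delta_0$ and $\tilde u\ge\tilde q_c(x+x_0)>0$.

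The key step is to show that $\tilde u(t,\cdot)\equiv w_*$ for each $t$, and hence $m=l_*$. For fixed $t$, the function $\tilde u(t,\cdot)$ satisfies the Cauchy data $(\delta,0)$ at $x=m$. This suggests a zero-number comparison with $\phi$, the solution of $d\phi''+f(\phi)=0$ with $\phi(m)=\delta$ and $\phi'(m)=0$, namely $\phi(x)=w_*(x-m+l_*)$, extended by symmetry where needed. Set $\eta=\tilde u-\phi$ on a region where $\phi$ is defined. Then $\eta$ solves a linear equation of the form \eqref{0001}, and $\eta(t,m)=\eta_x(t,m)=0$ for every $t$. This is a degenerate zero at every time, which is not directly covered by Lemma \ref{lemma0} since it lies on the boundary. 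To handle it, we use unique continuation at the lateral boundary. Reflect $\tilde u$ across $x=m$ by setting $\tilde u(t,2m-x):=\tilde u(t,x)$. Because $\tilde u(t,m)=\delta$ and $\tilde u_x(t,m)=0$, the reflected function is $C^{1}$ across $x=m$, and it remains a (weak, then classical) solution of the same equation on $(0,2m)$. The symmetric extension of $\phi$ also solves the ODE. Then $\eta$ has a degenerate zero at the interior point $x=m$ for every $t$. By Lemma \ref{lemma0}(2) and Remark \ref{rk-nondege}, this can happen at only finitely many times unless $\eta\equiv0$, provided $\eta$ is nonzero at the lateral boundaries of a suitable subdomain. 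Hence $\eta\equiv0$ near $x=m$, and so $\tilde u(t,\cdot)=\phi$ on $[0,m]$. From $\tilde u(t,0)=\delta_0$, $\phi(0)=\delta_0$, and the monotonicity of $w_*$ on $[0,l_*]$ (using $\tilde u>\delta_0$ in $(0,m]$, so $\phi$ cannot have passed its minimum), we obtain $m=l_*$ and $\tilde u=w_*$.

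The main obstacle is making the zero-number step rigorous. We must choose a subdomain $(\xi_1,\xi_2)\ni m$ on whose lateral boundaries $\eta\neq0$ for all $t$ in an interval, which is delicate because $\tilde u$ is only known to be bounded. If this fails, the fallback is a direct energy argument. Multiply the equation by $\tilde u_t$ to get $\frac{d}{dt}E(t)=-\int_0^m\tilde u_t^2\le 0$, with $E=\int_0^m\bigl(\frac d2\tilde u_x^2-F(\tilde u)\bigr)dx$; here the boundary terms vanish since $\tilde u_t=0$ at $x=0,m$. Boundedness of the entire solution then gives convergence to steady states as $t\to\pm\infty$. These steady states are solutions of $dw''+f(w)=0$ with $w(0)=\delta_0$, $w(m)=\delta$, $w'(m)=0$ and $w>\delta_0$, and by the first-integral identity \eqref{w} each must be $w_*$ with $m=l_*$. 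Since both limits are the same steady state, $E(-\infty)=E(+\infty)$, so $\int\!\!\int\tilde u_t^2=0$ and $\tilde u\equiv w_*$. Because every limit along every sequence $t_n\to\infty$ is $w_*$ with $m=l_*$, we conclude that $\sup_{x\in[0,h(t)]}|u(t,x)-w_*(x)|\to0$, using $h(t)\to l_*$ and the uniform convergence of $U$.
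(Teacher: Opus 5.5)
Your first route is the paper's argument: rescale to $[0,m]$, get $h'\to 0$, pass to an entire limit $\tilde u$ with $\tilde u(t,m)=\delta$ and $\tilde u_x(t,m)=0$, reflect both $\tilde u$ and $\phi=w_*(\cdot-m+l_*)$ evenly about $x=m$, and contradict Remark \ref{rk-nondege} using the degenerate zero of $\eta=\tilde u-\phi$ at $x=m$ for every $t$. As written, though, this route stops at exactly the step you flag. The difficulty is not real, and boundedness of $\tilde u$ has nothing to do with it.

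Since $\tilde u$ and $\phi$ are both even about $m$, so is $\eta$. Suppose $\eta(t_0,x_0)\neq 0$ for some $x_0$ in the common interval $(\max\{0,m-l_*\},\min\{2m,m+l_*\})$. Then $x_0\neq m$ and $\eta(t_0,2m-x_0)=\eta(t_0,x_0)\neq 0$. By continuity, $\eta\neq 0$ at both $x_0$ and $2m-x_0$ for $|t-t_0|\le\epsilon$. Now apply Lemma \ref{lemma0} on the strip between these two points. Remark \ref{rk-nondege} allows degenerate zeros at only finitely many times, which contradicts the degenerate zero at $x=m$ for all $t$. This gives $\eta\equiv 0$ on the whole common interval at once. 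It also fills your unjustified jump from ``$\eta\equiv 0$ near $x=m$'' to ``$\tilde u(t,\cdot)=\phi$ on $[0,m]$''. Then $m=l_*$ follows from $\tilde u(t,0)=\delta_0$ and $\tilde u>\delta_0$ on $(0,m]$, as you say.

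Your energy fallback, by contrast, is a correct and genuinely different completion. The limit problem lives on a fixed interval with constant Dirichlet data, so $E$ is a Lyapunov functional with $E'=-\int_0^m\tilde u_t^2$. The same $L^p$ estimates give uniform-in-time $C^{1+\alpha}$ bounds on $\tilde u$. Hence $E$ is bounded and $\int_{\mathbb R}\int_0^m\tilde u_t^2<\infty$, so every limit of time-translates as $t\to\pm\infty$ is a steady state. The inherited conditions $w'(m)=0$ and $w\ge\tilde q_c(\cdot+x_0)>\delta_0$ on $(0,m]$ force each such steady state to be $w_*$, with $m=l_*$. Finally, $E(-\infty)=E(+\infty)$ forces $\tilde u_t\equiv 0$.

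Compared with the paper, this route avoids Angenent's theorem and the reflection entirely. It pays for that with the $\omega$/$\alpha$-limit analysis and the uniform $C^1$ control needed for $E$ to be bounded and continuous along convergent sequences. The paper's route is shorter once the symmetric pair $\{x_0,2m-x_0\}$ is used, since it exploits only the local degeneracy at $x=m$. It also reuses zero-number machinery the paper needs anyway in Lemma \ref{liminf-h}.
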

	
	\begin{proof}  
		Define the transformation
		\[
		y := \frac{m x}{h(t)}, \quad U(t, y) := u(t, x).
		\]
		A direct computation shows that $ U $ satisfies
		\begin{equation}\label{3.9}
			\begin{cases}
				U_t - d \big[\frac{m}{h(t)}\big]^2 U_{yy} - \frac{h'(t)}{h(t)} yU_y = f(U), & t > 0, \, y \in [0, m], \\
				U(t, 0) = \delta_0, \, U(t, m) = \delta, & t > 0, \\
				h'(t) = -\frac{d}{\delta} \frac{m}{h(t)} U_y(t, m), & t > 0, \\
				U(0, y) = u_0\left(\frac{h_0 y}{m}\right), & y \in [0, m].
			\end{cases}
		\end{equation}
	By Corollary \ref{c3.2} we have $|h'(t)|\leq C_2$ for $t\geq 0$. Then by standard $ L^p $ theory, we deduce from \eqref{3.9} that for any $ p > 1 $,
		\[
		\|U\|_{W_p^{1,2}([n, n+2] \times [0, m])} \leq C_p,
		\]
		for all integers $ n \geq 1 $ and some $ C_p > 0 $ independent of $ n $. Taking $ p $ sufficiently large and applying the Sobolev embedding theorem, we find from the  equation on the third line of \eqref{3.9} that $ h'(t) $ is uniformly continuous for $ t \geq 1 $. This and $h(t)\to m$ imply $ h'(t) \to 0 $ as $ t \to \infty $.
		
		Let $ \{t_n\} $ be an arbitrary sequence such that $ t_n \to \infty $ as $ n \to \infty $, and define
		\[
		U_n(t, y) := U(t_n + t, y).
		\]
		Applying $ L^p $ estimates to the equation satisfied by $U_n$, and then the Sobolev embedding theorem, we find that, up to a subsequence, $ U_n \to \tilde{U} $ in $ C_{\text{loc}}^{\frac{1+\alpha}{2}, 1+\alpha}(\mathbb{R} \times [0, m]) $ for some $ \alpha \in (0, 1) $. The limit $ \tilde{U} $ satisfies (in the $W^{1,2}_p$ sense and then classical sense)
		\begin{equation}\label{3.2}
			\begin{cases}
				\tilde{U}_t - d \tilde{U}_{yy} = f(\tilde{U}), & t \in \mathbb{R}, \, y \in [0, m], \\
				\tilde{U}(t, 0) = \delta_0, \, \tilde{U}(t, m) = \delta, & t \in \mathbb{R}, \\
				\tilde{U}_y(t, m) = 0, & t \in \mathbb{R}.
			\end{cases}
		\end{equation}
		
		The last identity allows us to extend \( \tilde{U} \) to \( y\in [m, 2m] \) such that  the extended $\tilde U$  is  symmetric about \( y = m \) and still satisfies the same differential equation for $0<y<2m$. Similarly the function $V(y):=w_*(y-m+l_*)$ can be extended symmetrically about $y=m$ and the extended $V$  satisfies \eqref{3.2} for $y\in (m-l_*, m+l^*)$. Define  
		\(\eta(t, x) := \tilde{U}(t, x) -  w_*(x - m + l_*)\). Then,  
		\begin{equation}\label{3.111}  
			\eta(t, m) = \eta_x(t, m) = 0 \quad \text{for } t \in \mathbb{R}. 
		\end{equation}  
		The function \( \eta(t, x) \) satisfies, for some $L^\infty$ function $c(t,x)$,
		\[
		\eta_t=d\eta_{xx}+c(t,x)\eta \mbox{ for } t\in\mathbb R, \ \max\{0, m-l_*\}<x< \min\{2m, m+l_*\}. 		\]
		The degeneracy of \(x= m \) in \eqref{3.111}  would lead to a contradiction to Lemma \ref{lemma0} unless $\eta\equiv 0$.
		Indeed, if $\eta(t_0,x_0)\not=0$ for some $t_0\in\mathbb R$ and $\max\{0, m-l_*\}<x_0< \min\{2m, m+l_*\}$, then necessarily $x_0\not=m$.
		By symmetry and the continuity of $\eta$, there exists $\epsilon>0$ small so that
		\[
		\eta(t,x)\not= 0 \mbox{ for } t\in [t_0-\epsilon, t_0+\epsilon],\ x\in\{x_0, \ \tilde x_0\} \mbox{ with } \tilde x_0:=2m-x_0.
		\]
		Then \eqref{3.111} contradicts the statement in Remark \ref{rk-nondege}.
		Therefore, we must have $\eta\equiv 0$, which implies \( m=l_* \) and \( {U} \equiv w_* \), completing the proof.
	\end{proof}
	
	We next obtain a function $q_c(x)$ which will be used to construct a lower solution to estimate $h(t)$.
	
	\begin{lemma}\label{lem-lc}	Suppose  $0\leq \delta_0 < \delta\in (\hat\theta_f, 1) $. Then there exists $\epsilon_0>0$ sufficiently small such that for every $c\in [0, \epsilon_0]$,	
		the  problem
		\begin{equation}\label{qc1}
			\left\{
			\begin{array}{ll}
				d q'' - c q' + f(q) = 0 \mbox{ for } x \in (0, l), \\
				q'(x)>0 \mbox{ for } x\in [0, l),\\
				q(0) = \delta_0, \quad q(l) = \delta, \quad q'(l) = 0
			\end{array}
			\right.
		\end{equation}
		has a unique solution pair $(l,q(x))=(l_c, q_c(x))$. Moreover, 
		\[
		\mbox{\( l_c > l_0 = l_* \) for \( c \in (0, \epsilon_0] \), and  \( \lim_{c \to 0} l_c = l_* \).}
		\]
	\end{lemma}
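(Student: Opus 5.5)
We will work in the phase plane, running the trajectory backwards from the point $(q,p)=(\delta,0)$. Set $p=q'$, so \eqref{qc1} becomes the first order system
\[
q'=p,\qquad p'=\tfrac{1}{d}\big(cp-f(q)\big).
\]
Since $\delta\in(\hat\theta_f,1)$, we have $f(\delta)>0$. Consider the solution $(q,p)=(Q_c(s),P_c(s))$ with $(Q_c,P_c)(0)=(\delta,0)$, followed for $s<0$. At $s=0$ we have $p'=-f(\delta)/d<0$, so $P_c>0$ for small $s<0$. As long as $p>0$ the curve is a graph $p=\phi_c(q)$, and this graph satisfies
\[
\frac{d}{dq}\Big(\tfrac12\phi_c^2\Big)=\frac{c\,\phi_c-f(q)}{d},\qquad \phi_c(\delta)=0 .
\]
When $c=0$ the solution is explicit: $\phi_0(q)=\big[\frac2d\int_q^\delta f(s)\,ds\big]^{1/2}$. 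This is strictly positive on $[\delta_0,\delta)$, because $\int_q^\delta f>0$ for $q<\delta$. For $f$ of type ${\bf (f_b)}$ or ${\bf (f_c)}$ this positivity uses $\delta>\theta^*_f$ together with the definition \eqref{theta*}. For $q\in[0,\theta]$ one has $\int_q^\delta f\ge\int_0^\delta f>0$, and for $q>\theta$ one has $f>0$ on $(q,\delta)$. Hence $\phi_0\ge \kappa>0$ on $[\delta_0,\delta-\eta]$ for each fixed $\eta>0$.

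The key observation is a comparison between $\phi_c$ and $\phi_0$. Since $\phi_c>0$ and $c>0$, the term $c\phi_c/d$ is positive. Integrating from $q$ up to $\delta$ gives
\[
\tfrac12\phi_c^2(q)=\tfrac12\phi_0^2(q)-\tfrac{c}{d}\int_q^\delta\phi_c<\tfrac12\phi_0^2(q).
\]
So $\phi_c<\phi_0$ on the region where $\phi_c>0$. We still need $\phi_c$ to remain positive all the way down to $q=\delta_0$. We obtain this from continuous dependence on $c$, with one caveat. The point $(\delta,0)$ is not an equilibrium but a regular point, where $p'=-f(\delta)/d\neq 0$. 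Therefore $s$-trajectories depend continuously on $c$ uniformly on compact $s$-intervals. Since $\phi_0$ reaches $q=\delta_0$ with slope $p=\phi_0(\delta_0)>0$ after a finite time, for $c\le\epsilon_0$ small the trajectory $(Q_c,P_c)$ also crosses $q=\delta_0$ at some $s=-l_c$ with $P_c>0$ throughout $[-l_c,0]$. Translating, $q_c(x):=Q_c(x-l_c)$ then solves \eqref{qc1} with $q_c'>0$ on $[0,l_c)$.

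Uniqueness follows because any solution of \eqref{qc1} has $(q,q')(l)=(\delta,0)$. It is therefore the backward trajectory from $(\delta,0)$, and $l$ is forced to be the first hitting time of the level $q=\delta_0$, this level being reached monotonically.

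For the length we use the formula $l_c=\int_{\delta_0}^{\delta}\frac{dq}{\phi_c(q)}$, which is valid because $q$ is strictly monotone along the arc. For $c>0$ we have $\phi_c<\phi_0$ on $[\delta_0,\delta)$, hence $l_c>l_0$. Moreover $l_0=l_*$ by \eqref{l*}.

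The main obstacle is showing $l_c\to l_*$, because the integrand $1/\phi_c$ is singular at $q=\delta$. Near $\delta$, both $\phi_c$ and $\phi_0$ behave like $\sqrt{2f(\delta)(\delta-q)/d}$. To control this we will split the integral at $\delta-\eta$. On $[\delta_0,\delta-\eta]$ we have $\phi_c\to\phi_0$ uniformly, with $\phi_0\ge\kappa>0$, so this part converges. On $[\delta-\eta,\delta]$ we use the crude bound $\frac12\phi_c^2\ge\frac12\phi_0^2-\frac{c}{d}(\delta-q)\sup\phi_c$. Choosing $\eta$ so that $f\ge f(\delta)/2$ on $[\delta-\eta,\delta]$, this gives
\[
\phi_c^2\ge \Big(\tfrac{f(\delta)}{d}-\tfrac{2cC}{d}\Big)(\delta-q),
\]
where $C$ bounds $\phi_c$, so $\phi_c^2\ge a(\delta-q)$ with $a>0$ for small $c$. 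The tail integral is then $O(\sqrt\eta)$ uniformly in $c$. Letting $c\to0$ and then $\eta\to0$ yields $l_c\to l_*$.
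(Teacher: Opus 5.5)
Your proof is correct, but it establishes the key inequality $l_c>l_*$ by a different route from the paper.

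\textbf{Existence.} This part is the same in spirit. The paper also perturbs the $c=0$ trajectory through $(\delta,0)$; it uses the trajectory running all the way down to $q=0$ and then cuts it at $q=\delta_0$.

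\textbf{The inequality $l_c>l_*$.} The paper compares the graphs $p=P_c(q)$ (your $\phi_c$) for two speeds $0\le c_1<c_2$. It uses that $F_c(q,P)=\frac cd-\frac{f(q)}{dP}$ is increasing in $c$ and in $P$ wherever $f(q)>0$. It first rules out $P_{c_1}\le P_{c_2}$ at any point of $[\theta,\delta)$, by integrating $P_{c_1}'<P_{c_2}'$ up to the common value $0$ at $q=\delta$. It then handles $q<\theta$ by a separate first-touching argument.

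Your energy identity $\frac12\phi_c^2(q)=\frac12\phi_0^2(q)-\frac cd\int_q^\delta\phi_c$ gives $\phi_c<\phi_0$ in one line. It needs no case distinction on the sign of $f$ and has no difficulty at the singular endpoint, since $\phi_c^2$ is $C^1$ up to $q=\delta$. The same identity also shows quantitatively that $\phi_c$ stays positive down to $\delta_0$ once $c$ is small, because $\phi_0^2(q)\ge a_0(\delta-q)$ on $[\delta_0,\delta]$.

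In exchange, the paper's comparison yields strict monotonicity: $l_{c_1}<l_{c_2}$ for all $0\le c_1<c_2\le\epsilon_0$. Your identity compares a given $c$ only with $c=0$, so it does not give this. The lemma, however, only asks for $l_c>l_*$.

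\textbf{Convergence $l_c\to l_*$.} Your ``main obstacle'' is self-imposed. You already know $Q_c\to Q_0$ uniformly on compact $s$-intervals, and $Q_0$ crosses the level $q=\delta_0$ transversally, since $P_0(-l_*)=\phi_0(\delta_0)>0$. The crossing times therefore converge, which gives $l_c\to l_*$ immediately; this is essentially how the paper argues. Your splitting of $\int_{\delta_0}^\delta dq/\phi_c$ with the $O(\sqrt\eta)$ tail bound is correct but not needed.

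\textbf{Minor points.} $P_c>0$ holds on $[-l_c,0)$, not at $s=0$. Positivity of $P_c$ near $s=0$ uniformly in small $c$ follows from $P_c'\approx -f(\delta)/d$ there, and you should say so explicitly.
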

	\begin{proof} Let
		\begin{equation}\label{l_0}
			\tilde l_0:= \int_0^{\delta} \frac{dx}{\sqrt{\frac{2}{d}\int_{x}^{\delta} f(s)}ds}  \in (0, \infty).
		\end{equation}
		As in the proof of Lemma \ref{lemma3.4}, the identity
		\begin{equation}\label{w_0}
			\int_{w(x)}^\delta \frac{du}{\sqrt{\frac{2}{d}\int_{u}^{\delta} f(s) \, ds}}du =\tilde l_0-x \mbox{ for } x\in [0, \tilde l_0],
		\end{equation}
		uniquely defines a function $w(x)$, which  satisfies
		\begin{equation}\label{l0}
			\begin{cases}
				d w'' + f(w) = 0 \mbox{ for } x \in (0, \tilde l_0), \\
				w'(x) > 0 \mbox{ for }  x \in [0, \tilde l_0),\\
				w(0) = {0}, \quad w(\tilde l_0) = \delta, \quad w'(\tilde l_0) = 0.
			\end{cases}
		\end{equation}
		
		To prove our conclusions in \eqref{qc1}, we rewrite the first equation in \eqref{qc1} as a first order ODE system and view it as a perturbation from the case $c=0$, which corresponds to \eqref{l0}, for which the solution is already known.
		
		More precisely, a solution pair $(l, q(x))$ to
		\eqref{qc1} gives a trajectory of the first-order system
		\begin{equation}\label{s2}
			q'(z) = p, \quad p' (z)= \frac{1}{d}(c p - f(q)),
		\end{equation}
		which connects the two points $(q,p)=(0, q'(0))$ and $(q,p)=(\delta, 0)$ in the $q$-$p$ plane, and lies above the $q$-axis. Moreover,
		if $(q(z_1), p(z_1))=(\delta_0, q'(0))$ and $(q(z_2), p(z_2))=(\delta, 0)$, then $l=z_2-z_1$.
		
		With $\tilde l_0$ and $w(x)$ given by \eqref{l_0} and \eqref{l0}, respectively, we easily see that
		\[
		q_0(z):=w(z+\tilde l_0),\ p_0(z):=w'(z+\tilde l_0)
		\]
		satisfies \eqref{s2} with $c=0$, and $(q_0(-\tilde l_0), p_0(-\tilde l_0))=(0, w'(0))$, $(q_0(0), p_0(0))=(\delta, 0)$. Clearly $(q_0(z), p_0(z))$ with $z\in [-\tilde l_0, 0]$ represents a $C^1$ curve in the $q$-$p$ plane that lies above the $q$-axis and connects the two points $(0,w'(0))$ and $(\delta, 0)$. Moreover, this curve contains no stationary point of \eqref{s2} with $c=0$.	Therefore, by the continuous dependence of ODE solutions on the parameter in the system, there exists $\epsilon_0>0$ small such that for every $c\in (0,\epsilon_0]$, the system \eqref{s2} has a unique solution $(q_c(z), p_c(z))$ satisfying 
		$(q_c(0), p_c(0))=(\delta, 0)$, which is defined for $z$ in some maximal interval containing $[-\tilde l_c, 0]$, with $\tilde l_c$ determined by $(q_c(-\tilde l_c), p_c(-\tilde l_c))=(0, \tilde p_0)$ and $\tilde p_0>0$ close to $w'(0)$.	As $p_c'(0)=\frac 1d(cp_c(0)-f(q_c(0)))=-\frac{f(\delta)}d<0$, we further see that $(q_c(z), p_c(z))$ with $z\in [-\tilde l_c, 0]$ gives a $C^1$ curve in the $q$-$p$ plane that connects the points $(0, \tilde p_c)$ and $(\delta, 0)$, and lies above the $q$-axis.
		Since $q_c(z)$ is strictly increasing in $[-\tilde l_c, 0]$ and $\delta_0\in [0, \delta)$, there exists a unique $z_c\in [\tilde l_c, 0)$ such that $q_c(z_c)=\delta_0$.
		Now define $l_c:=-z_c$ and $\hat q_c(x):=q_c(z_c+x)$; then it is straightforward to verify that $(l,q(x)):=(l_c, \hat q_c(x))$ satisfies \eqref{qc1}. Since $(q_c(z), p_c(z))\to (q_0(z), p_0(z))$ as $c\to 0$, it is easy to see that $l_c\to l_0$ when $c\to 0$. 
		
		To complete the proof, it remains to show that $l_c>l_0$ for $c\in (0, \epsilon_0]$. 
		Since \( q_c'(z)=p_c(z) > 0 \) for all \( z \in [0, l_c) \), the trajectory $(q_c(z), p_c(z))$ in the $q$-$p$ plane can be represented as a function \( p=P_c(q) \) with \( q \in [\delta_0, \delta) \), which satisfies
		\begin{equation}\label{s3a}
			P_c' = \frac{c}{d} - \frac{f(q)}{d P_c} =: F_c(q, P_c), \quad 
			P_c(\delta_0) = \omega_c:=p_c(z_c)>0, \quad \lim_{q \to \delta} P_c(q) = 0.
		\end{equation}
		Note that, if we define 
		\[
		\theta_f:=\begin{cases} 0 &\mbox{ if  $f$ satisfies } {\bf (f_m)},\\
		\theta &\mbox{ if $f$ satisfies ${\bf (f_b)}$ or ${\bf (f_c)}$},\end{cases},
		\]
		 then  for each fixed \( q \in (\theta_f,1) \), the function \( F_c(q, P) \) is strictly increasing in both \( c \) and \( P \). 		
		Let \( 0 \leq c_1 < c_2\leq\epsilon_0 \). We claim that
		\begin{equation}\label{A1.7}
			P_{c_1}(q) > P_{c_2}(q) \mbox{ for } q\in [\delta_0, \delta).
		\end{equation}
		
	If $\delta_0 < \theta_f$ (which can happen only if ${\bf (f_b)}$ or ${\bf (f_c)}$ holds), we first prove that the conclusion holds for all $q \in [\theta, \delta]$. Suppose, for contradiction, that there exists $z_0 \in [\theta, \delta)$ such that $0 < P_{c_1}(z_0) \leq P_{c_2}(z_0)$.
		Then it follows from \eqref{s3a} and the monotonicity of \( F_c(q, P) \) in both variables $c$ and $P$  that
		\[
		P_{c_1}'(z_0) = F_{c_1}(z_0, P_{c_1}(z_0)) < F_{c_2}(z_0, P_{c_2}(z_0)) = P_{c_2}'(z_0).
		\]
		We claim that \( P_{c_1}(q) < P_{c_2}(q) \) for all \( q \in (z_0, \delta) \). Otherwise due to $P_{c_1}'(z_0)<P_{c_2}'(z_0)$ and $ P_{c_1}(z_0) \leq P_{c_2}(z_0)$ there exists \( q_0 \in (z_0, \delta) \) such that $P_{c_1}(q) < P_{c_2}(q)$ for all $q \in (z_0, q_0)$, and $P_{c_1}(q_0) = P_{c_2}(q_0)$.
		Then we must have $P_{c_1}'(q_0) \geq P_{c_2}'(q_0)$,
		which contradicts the fact that $P_{c_1}'(q_0) = F_{c_1}(q_0, P_{c_1}(q_0)) < F_{c_2}(q_0, P_{c_2}(q_0)) = P_{c_2}'(q_0)$.
		Hence, \( P_{c_1}(q) < P_{c_2}(q) \) for all \( q \in (z_0, \delta) \).
		Since \( F_c(q, P) \) is strictly increasing in both \( c \) and \( P \), it then follows that $P_{c_1}'(q) < P_{c_2}'(q)$ for all $q \in (z_0, \delta)$,
		and thus
		\[
		P_{c_1}(\delta) = \int_{z_0}^{\delta} P_{c_1}'(q)\, dq + P_{c_1}(z_0)
		< \int_{z_0}^{\delta} P_{c_2}'(q)\, dq + P_{c_2}(z_0) = P_{c_2}(\delta),
		\]
		which contradicts the boundary condition \( P_c(\delta) = 0 \) for all $c$. Hence \eqref{A1.7} holds for $q\in[\theta, \delta]$.
		Since $P_{c_1}(\theta) > P_{c_2}(\theta)$, if \eqref{A1.7} does not hold in the remaining range $q\in [0,\theta)$, then there exists $z_1\in[0,\theta)$ such that
		\[P_{c_1}(q) > P_{c_2}(q) \mbox{ for } q\in(z_1,\theta) \mbox{ and } P_{c_1}(z_1) = P_{c_2}(z_1).\]
		Hence $P_{c_1}'(z_1) \geq P_{c_2}'(z_1)$, but by \eqref{s3a}, 
		$P_{c_1}'(z_1)=\frac{c_1}{d} - \frac{f(z_1)}{d P_{c_1}(z_1)}<\frac{c_2}{d} - \frac{f(z_1)}{d P_{c_2}(z_1)}=P_{c_2}'(z_1)$, which gives a contradiction.
		We have now proved \eqref{A1.7} for the case $\delta_0<\theta_f$.
		
	If $\delta_0 \geq \theta_f$, then the proof is simpler as the argument in the first part of the above analysis leads to the conclusion.
		
		From $q_c'(z)=p_c(z)=P_{c}(q_c(z))  $ we obtain
		\[ \frac {q_c'(z)}{P_c(q_c(z)) }=1,\ \mbox{ which implies } \int_{\delta_0}^\delta\frac {dq}{P_c(q)}=l_c.
		\]
		We may now use \eqref{A1.7} to obtain  $l_{c_1} < l_{c_2}$ for  $0 \leq c_1 < c_2\leq \epsilon_0$. In particular, $l_c>l_0$ for 
		\( c \in (0, 
		\epsilon_0] \).
	\end{proof}
	
	\begin{lemma}\label{lemma1.2}
		Under the assumptions of Theorem~\ref{key1}, we have $\limsup_{t \to \infty} h(t) \leq l_*$.
	\end{lemma}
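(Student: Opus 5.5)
The plan is to argue by contradiction, combining a compactness argument at near-maximal times of $h$ with the zero number result (Lemma \ref{lemma0}). Suppose $\bar h:=\limsup_{t\to\infty}h(t)>l_*$. If $h(t)$ is eventually monotone, then $\lim_{t\to\infty}h(t)$ exists and lies in $(0,\infty)$, so Lemma \ref{l8} gives $\lim h=l_*$, a contradiction. Hence $h$ oscillates, and I would pick local maximum times $t_n\to\infty$ of $h$ with $h(t_n)\to\bar h$ and $h'(t_n)=0$.

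As in the proofs of Lemmas \ref{le3.1} and \ref{l8}, the bound $|h'|\le C_2$ from Corollary \ref{c3.2} together with the $L^p$ estimates in the rescaled variable $y=x/h(t)$ gives convergence along a subsequence. The limit is an entire solution $(\tilde u(t,x),\tilde h(t))$ of \eqref{1.1} for $t\in\mathbb R$. It satisfies
\[
\tilde h(0)=\bar h=\max_{t\in\mathbb R}\tilde h(t),\qquad \tilde h'(0)=0,\qquad \tilde u_x(0,\bar h)=0,
\]
and, by Corollary \ref{c3.2}, $\tilde u>\delta_0$ in $(0,\tilde h(t)]$.

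The comparison function will be a slowly moving traveling wave piece from the introduction,
\[
v_L(t,x)=q_c(-x+ct+L),
\]
with peak $I(t)=ct+L$, value $\delta$ at the peak and value $\delta_0$ at both ends $G(t)=I(t)-l^c$ and $H(t)=I(t)+l_c$. I would first extend Lemma \ref{lem-lc} by the same phase-plane perturbation around the symmetric extension of $w_*$, to get $l_c,l^c\to l_*$ as $c\to0$. Since $\bar h>l_*$, for small $c>0$ we then have $l^c<\bar h$. I would place the peak at $I(0)=\bar h$, so that $G(0)>0$, and set
\[
\eta:=\tilde u-v_L \quad\text{on } \{(t,x): G(t)<x<\min\{\tilde h(t),H(t)\}\}.
\]
At $t=0$ the point $x=\bar h$ is a degenerate zero, since both functions equal $\delta$ there with zero $x$-derivative. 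On the left end, $\tilde u(t,G(t))>\delta_0=v_L(t,G(t))$, so that end never carries a zero.

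The key and hardest step is the right end. For $t<0$ and small $|t|$, I need to show that the right endpoint carries no zero and that the zero count is finite. For $t<0$ the peak of $v_L$ lies at $\bar h+ct<\bar h$. Meanwhile $\tilde h(t)\le\bar h$, so either $\tilde h(t)<I(t)$ or $\tilde h(t)$ lies in the decreasing part of $v_L$, where $v_L<\delta=\tilde u(t,\tilde h(t))$. This gives a definite sign at $x=\tilde h(t)$ whenever $\tilde h(t)\ge I(t)$. The delicate case is $\tilde h(t)<I(t)$, where $v_L$ exceeds... rather, where $v_L<\delta$ on the increasing side. I would handle it by choosing the speed $c$ (or, alternatively, $c<0$ with a reflected wave) relative to the asymptotics of $\tilde h(t)$ near $t=0$. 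Since $\tilde h'(0)=0$ and $\tilde h$ is $C^{1+\alpha/2}$, we have $\tilde h(t)\ge \bar h-o(|t|)>\bar h+ct=I(t)$ for small $t<0$. So the right end is $x=\tilde h(t)$, where $\eta>0$.

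With this in place, Lemma \ref{lemma0}(2) and Remark \ref{rk-nondege} force the zero number to drop strictly across $t=0$, for every shift. I then expect to reach a contradiction by varying $L$ slightly (equivalently, the peak position) and using the local structure near $(0,\bar h)$: a nonzero $\eta$ must produce a nondegenerate configuration near the top, which is incompatible with the strict drop, unless $\tilde u(0,\cdot)\equiv v_L(0,\cdot)$. That identity is impossible because $\tilde u(0,\cdot)$ solves the $c=0$ equation while $v_L(0,\cdot)$ solves the equation with $c>0$.

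The main obstacle I anticipate is exactly this boundary control for the zero count on the moving domain. The order-preserving property fails, so I cannot simply compare $\tilde u$ and $v_L$; I must ensure that no zero enters through $x=\tilde h(t)$, and this is where $l^c<\bar h$ together with $\tilde h'(0)=0$ must be used carefully.
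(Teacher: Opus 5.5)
\textbf{There is a genuine gap: the step meant to produce the contradiction does not work.}

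Your reduction is fine. If the limit exists, Lemma~\ref{l8} applies. Otherwise you take local maxima $t_n$ and pass to an entire limit $(\tilde u,\tilde h)$ with $\tilde h\le\bar h=\tilde h(0)$ and $\tilde h'(0)=0$.

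The problems come after that. The degenerate zero of $\eta=\tilde u-v_L$ at $t=0$ lies at $x=\bar h=\min\{\tilde h(0),H(0)\}$, i.e.\ on the boundary of the domain of $\eta$. That is exactly where the hypothesis $\eta(t,\xi_2(t))\neq0$ of Lemma~\ref{lemma0} fails, so Lemma~\ref{lemma0}(2) and Remark~\ref{rk-nondege} cannot be used across $t=0$. Your worry about the right end for $t\neq0$ is unfounded: $v_L<\delta$ on $(G(t),H(t))\setminus\{I(t)\}$, so $\eta>0$ at the right end whenever $\tilde h(t)\neq I(t)$, in particular for small $t\neq0$.

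More seriously, even a strict drop of the zero number at $t=0$ is no contradiction, since $\mathcal Z$ may drop finitely many times. In Lemma~\ref{l8} the contradiction came from $\tilde U_y(t,m)=0$ holding for every $t\in\mathbb R$. Here you only have $\tilde h'(0)=0$ at a single instant. Your closing sentence about varying $L$ and $\tilde u(0,\cdot)\equiv v_L(0,\cdot)$ does not name an actual mechanism. The two-sided, rightward-moving piece $v_L$ is the paper's tool for the $\liminf$ bound, not for this one.

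\textbf{The missing idea is that a plain comparison suffices.} The lack of order preservation concerns two solutions of \eqref{1.1}. It does not prevent comparing $u$ with an explicit subsolution on the given domain $\{0<x<h(t)\}$.

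Use the increasing profile $(k_c,l_c,q_c)$ of \eqref{nqc1}, which has $q_c(l_c)=\delta$, $q_c'(l_c)=0$ and $l_c\downarrow l_*$; so $l_c<\bar h$ for small $c$. Let it move to the \emph{left}. In your setting, set $\underline u(t,x)=q_c(x+ct-L)$, extended by $0$ to the left of its support, with $L=\bar h-l_c>0$. Then:
\begin{itemize}
\item for $t\le0$ and $0\le x\le\tilde h(t)\le\bar h$, the argument is at most $l_c+ct\le l_c$, so $\underline u\le\delta$ at $x=\tilde h(t)$;
\item at $x=0$ the argument is at most $-L<0$, so $\underline u(t,0)\le\delta_0$;
\item $\underline u(-T,\cdot)\equiv0$ on $[0,\tilde h(-T)]$ once $cT>l_c+k_c$.
\end{itemize}
Comparison on $[-T,0]$, followed by the Hopf lemma at $(0,\bar h)$ where both functions equal $\delta$, gives $\tilde u_x(0,\bar h)<q_c'(l_c)=0$. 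This contradicts $\tilde h'(0)=0$.

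The paper runs the same comparison directly, without compactness. For large $L$, if $h$ ever touches the edge $-ct+L+l_c$, Hopf gives $h'>0$ there. Re-applying with shifted $L$ shows $h'>0$ forever, so $h$ converges to a limit $>l_c>l_*$, contradicting Lemma~\ref{l8}. Hence $h(L/c)\le l_c$ for all large $L$, and letting $c\to0$ gives $\limsup_{t\to\infty}h(t)\le l_*$.
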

	\begin{proof}
		The proof proceeds in three steps.
			
		\textbf{Step 1.} Construction of a lower solution.
	
		Fix \( c \in (0,\epsilon_0] \)  and   let $(k,l,q(x))=(k_c, l_c, q_c(x))$ be the solution pair solving  
			\begin{equation}\label{nqc1}
			\left\{
			\begin{array}{ll}
				d q'' - c q' + f(q) = 0 \mbox{ for } x \in (-k, l), \\
				q'(x)>0 \mbox{ for } x\in [-k, l),\\
				q(-k)=0, \quad	q(0) = \delta_0, \quad q(l) = \delta, \quad q'(l) = 0
			\end{array}
			\right.
		\end{equation}
obtained in	Lemma \ref{lem-lc}.  For \( L > h(0)+k_c \),	 the following function will serve as a lower solution for our analysis:
		\[
		\underline{u}_L(t,x): = 
		\begin{cases} 
			0 & \text{if } x < -c t + L-k_c, \\[4pt]
			q_c(x + c t - L) & \text{if } -c t + L-k_c \leq x \leq -c t + L + l_c.
		\end{cases}
		\]
		It is clear that \( \underline{u}_L(t,x) \) represents a profile moving leftward with speed \( c > 0 \). Denote 
		\[
		h_L^1(t) := -c t + L,\  h_L^2(t) := -c t + L + l_c,\ t_L^1 := \frac{L}{c}.		   \]
		Clearly  \( h_L^1(t_L^1) = 0 \). We now consider two possible cases regarding the position of the free boundary \(x= h(t) \) relative to 
		\(x= h_L^2(t) \) for \( t\in [0, t_L^1] \):
		\begin{align*}
			&\text{Case 1:} \quad h(t) < h_L^2(t) \quad \text{for all } t \in [0, t_L^1), \\
			&\text{Case 2:} \quad \text{there exists } t_1 \in (0, t_L^1) \text{ such that } h(t_1) = h_L^2(t_1), \
			h(t) < h_L^2(t) \ \text{for  } t \in (0, t_1).
		\end{align*}

		\textbf{Step 2.} We show that Case~2 cannot occur.
		
		Suppose, for contradiction, that Case~2 happens. We shall derive a contradiction by first proving that
		\begin{align*}
			h'(t) > 0 \quad \text{for all } t \geq t_1.
		\end{align*}
		This, together with the boundedness of \( h(t) \), i.e., \( \limsup_{t \to \infty} h(t) < \infty \), ensures that the limit \( \lim_{t \to \infty} h(t) \) exists. Lemma~\ref{l8} then yields $\lim_{t \to \infty} h(t) = l_*$.
		However, the assumption \( t_1 < t_L^1 \), combined with the identity \( h(t_1) = h_L^2(t_1) \), implies that 		
		\[
		h(t) > h(t_1) = h_L^2(t_1) > l_c \mbox{ for all \( t > t_1 \)},
		\]
		which contradicts the fact that \( l_* < l_c \). Thus, to complete Step~2, it suffices to show that \( h'(t) > 0 \) for all \( t \geq t_1 \).
		
		Due to \eqref{nqc1}, the function \( \underline{u}_L \) satisfies (in the weak sense)
		\begin{equation}\label{sxj}
			\left\{
			\begin{array}{ll}
				\partial_t \underline{u}_L - d\, \partial_{xx} \underline{u}_L - f(\underline{u}_L) \leq 0 = u_t - d u_{xx} - f(u), & 0 < t \leq t_1,\ 0 < x < h(t), \\
				\underline{u}_L(t, 0) \leq \delta_0 = u(t, 0), & 0 < t \leq t_1, \\
				\underline{u}_L(t, h(t)) < \delta = u(t, h(t)), & 0 < t < t_1, \\
				\underline{u}_L(0, x)=0 \leq u(0, x), & x \in [0, h(0)].
			\end{array}
			\right.
		\end{equation}
		Note that \( \partial_x \underline{u}_L(t, h_L^2(t)) = 0 \) for all \( t \geq 0 \). By the comparison principle for weak solutions, we have
		\[
		\underline{u}_L(t,x) < u(t,x) \quad \text{for } t \in [0, t_1],\ x \in (0, h(t)).
		\]
		Since \( h(t_1) = h_L^2(t_1) \), we have \( \underline{u}_L(t_1, h(t_1)) = u(t_1, h(t_1)) = \delta \). Then, by the Hopf boundary lemma,
		\[
		0 = \partial_x \underline{u}_L (t_1, h(t_1)) > \partial_x u(t_1, h(t_1)),
		\]
		which implies
		\[
		h'(t_1) = -\frac{d}{\delta} \partial_x u(t_1, h(t_1)) >0.
		\]
		If \( h'(t) \) is not strictly positive for all \( t \geq t_1 \), then there exists \( t_2 > t_1 \) such that
		\begin{align}\label{A1.3}
			h'(t) > 0 \quad \text{for } t \in [t_1, t_2),  \quad h'(t_2) = 0.
		\end{align}
		Define
		\[
		L_1 := h(t_2) + c t_2 - l_c,
		\]
		so that \( h_{L_1}^2(t_2) = h(t_2) \). We claim that \( t_2 \) is the first time at which \( h_{L_1}^2(t) \) touches \( h(t) \); that is,
		\begin{align}\label{A1.4}
			h_{L_1}^2(t) > h(t) \quad \text{and} \quad h_{L_1}^1(t) > 0 \quad \text{for all } t \in [0, t_2).
		\end{align}
		Indeed, for \( t \in [t_1, t_2] \), we have $\left[h_{L_1}^2(t) - h(t)\right]' = -c - h'(t) \leq -c< 0$, which implies, due to
		\( h_{L_1}^2(t_2) - h(t_2)=0 \),  that \( h_{L_1}^2(t) - h(t)>0 \) for all \( t \in [t_1, t_2) \). In particular,  \( h_{L_1}^2(t_1) > h(t_1) \). As 
		\( h_L^2(t_1) = h(t_1) \), the monotonicity of \( h_L^2(t) \) with respect to \( L \) leads to \( L_1 > L \). Consequently,
		\[
		h_{L_1}^2(t) > h_L^2(t) > h(t) \quad \text{for } t \in [0, t_1),
		\]
		proving the first inequality in \eqref{A1.4}. For the second inequality, observe that for \( t \in [0, t_2] \),
		\begin{align*}
			h_{L_1}^1(t) 
			&= -c t + L_1 
			= -c t + h(t_2) + c t_2 - l_c 
			\geq -c t_2 + h(t_2) + c t_2 - l_c 
			= h(t_2) - l_c \\
			&> h(t_1) - l_c 
			= h_L^2(t_1) - l_c 
			> 0,
		\end{align*}
		where the last inequality follows from the assumption of Case~2.
		
		From \eqref{A1.4}, we deduce that the function \( \underline{u}_L \), with \( L \) replaced by \(  L_1 \), satisfies the differential inequalities in \eqref{sxj} with $t_1$ replaced by $t_2$. Since \( h_{L_1}^2(t_2) = h(t_2) \),  we can use the Hopf boundary lemma as before to conclude \( h'(t_2) > 0 \), contradicting \eqref{A1.3}.
		
		Therefore, \( h'(t) > 0 \) for all \( t \geq t_1 \), and  Step~2 is completed.

		\textbf{Step 3.} Proof of \( \limsup_{t \to \infty} h(t) \leq l_* \).
		
		By the conclusion of Step~2, only Case~1 can occur. Therefore, for any \( L > h(0) \), we have
		\[
		h(t) \leq h_L^2(t_L^1) = l_c \mbox{ for } 0\leq t\leq t_L^1 := \frac{L}{c}.
		\]
		Letting $c\to 0$ we obtain $h(t)\leq \lim_{c\to 0}l_c=l_*$ for every $t\geq 0$. 
		This implies $\limsup_{t \to \infty} h(t)\leq {l}_{*}$.
	\end{proof}

	\begin{lemma}\label{liminf-h}
		Under the conditions of Theorem \ref{key1}, we have $\liminf_{t \to \infty} h(t)\geq l_*$.
	\end{lemma}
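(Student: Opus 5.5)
The plan is a mirror image of the proof of Lemma \ref{lemma1.2}. There, a leftward-moving increasing wave piece with $l_c>l_*$ was used as a lower solution to push $h(t)$ down to $l_*$. Here I would use a rightward-moving piece whose plateau length is slightly \emph{smaller} than $l_*$, in order to push $h(t)$ up.

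\textbf{Construction.} First I would run the phase-plane perturbation of Lemma \ref{lem-lc} with the sign of $c$ reversed. That is, for small $s>0$ I would solve
\[
d q''+s q'+f(q)=0,\quad q'>0 \text{ on } [-k^s,l^s),\quad q(-k^s)=0,\ q(0)=\delta_0,\ q(l^s)=\delta,\ q'(l^s)=0 .
\]
The comparison of the curves $P_c(q)$ in the proof of Lemma \ref{lem-lc} applies verbatim with $c=-s<0$. It gives $P_{-s}>P_0$ on $[\delta_0,\delta)$, hence $l^s<l_*$ and $l^s\to l_*$ as $s\to0$. Next I would define
\[
\underline u_L(t,x)=\begin{cases}0, & x<st+L-k^s,\\ q(x-st-L), & st+L-k^s\le x\le st+L+l^s,\\ \delta, & x>st+L+l^s .\end{cases}
\]
This is a weak lower solution. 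The gluing at the peak is $C^1$ because $q'(l^s)=0$, and the constant $\delta$ is a strict lower solution since $f(\delta)>0$ for $\delta\in(\hat\theta_f,1)$. The point of the flat extension is that $\underline u_L\le\delta=u(t,h(t))$ at the free boundary irrespective of the relative position of $h(t)$ and the peak $H(t):=st+L+l^s$. Moreover, $\underline u_L(t,0)\le\delta_0$ as long as the $\delta_0$-level point $st+L$ stays $\ge 0$. I would start with $L$ very negative, so that $\underline u_L(0,\cdot)\equiv0\le u_0$. The comparison principle on $\{0<x<h(t)\}$ then gives $u\ge\underline u_L$ up to the time $t^L:=-L/s$ at which $st+L=0$. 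At that time $\underline u_L(t^L,\cdot)$ is the stationary-looking profile $q(x)$ on $[0,l^s]$, followed by $\delta$ on $[l^s,\infty)$.

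\textbf{Key step.} Set $t^L=-L/s$. I would show that $h(t^L)\ge l^s$ (for all large $|L|$). Suppose instead that $h(t^L)<l^s=H(t^L)$. If in addition $h(t)\ge H(t)$ at some earlier time, take the last crossing time $t_1\le t^L$, with $h(t_1)=H(t_1)$. There $u\ge\underline u_L$, $u(t_1,h)=\underline u_L(t_1,h)=\delta$ and $\partial_x\underline u_L=0$. The strong maximum principle and the Hopf lemma give $u_x(t_1,h(t_1))<0$, i.e.\ $h'(t_1)>0$. This is the analogue of Step 2 of Lemma \ref{lemma1.2}. I then want to propagate this to a contradiction, exactly as in that step, by re-choosing $L_1$ so that the peak first touches $h$ at the first time $h'$ vanishes. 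This would force $h$ to be eventually monotone, hence convergent. Lemma \ref{l8} would then force the limit to be $l_*$, which is incompatible with $h$ staying below $l^s<l_*$ only if the crossing configuration is arranged correctly.

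\textbf{Main obstacle.} I expect this key step to be the hard point. Here the peak moves at speed $s>0$, and $h'(t_1)>0$ does not by itself prevent $H$ from overtaking $h$. So the monotonicity argument of Lemma \ref{lemma1.2} does not transfer directly. If the direct argument fails, my fallback is a zero-number argument. Using the bounds $\liminf h>0$ and $\limsup h\le l_*$ (Lemma \ref{lemma1.2}), I would compare $u$ with the family $w_*(x+\xi)$ on the common domain, and apply Lemma \ref{lemma0} together with Remark \ref{rk-nondege} to $\eta=u-w_*(\cdot+\xi)$ on $x\in(0,\min\{h(t),l_*-\xi\})$. Degenerate zeros created when $h(t)$ crosses $l_*-\xi$ with $u_x(t,h)$ of a given sign can happen only finitely often, which should force $h(t)\ge l_*-\xi-o(1)$ eventually. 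Once $h(t)\ge l^s$ along a sequence of times tending to infinity is secured (either way), letting $s\to0$ yields $\liminf_{t\to\infty}h(t)\ge l_*$.
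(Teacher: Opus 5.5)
There is a genuine gap, and it appears in the construction itself, before the step you flag as the main obstacle.

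\textbf{The lower solution cannot be placed.} Your $\underline u_L$ travels to the right, with its zero tail on the left and the plateau $\delta$ on the right. So for $t<t^L=-L/s$ the point $x=0$ lies to the right of the $\delta_0$-level point $st+L$. This gives $\underline u_L(t,0)=q(-st-L)>\delta_0$, or $=\delta$ once $-st-L\ge l^s$. In particular, with $L\ll 0$ you get $\underline u_L(0,\cdot)\equiv\delta$ on $[0,h_0]$, not $0$. This is not below $u_0$ near $x=0$, since $u_0(0)=\delta_0<\delta$. Thus the initial inequality fails, the left boundary inequality fails on all of $[0,t^L)$, and $u\ge\underline u_L$ up to $t^L$ does not follow.

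Your own remark that $\underline u_L(t,0)\le\delta_0$ requires $st+L\ge0$ shows the comparison could only start at $t^L$. There it would need $u(t^L,\cdot)\ge q$ on $[0,l^s]$ and $u(t^L,\cdot)\ge\delta$ beyond, which is essentially the conclusion. This is structural, not a matter of tuning $L$. Since $u(t,0)=\delta_0<\delta$, no increasing profile reaching $\delta$ can be slid into the domain through $x=0$ as a lower solution. In Lemma \ref{lemma1.2} the piece moves left and enters through the free boundary, which is why it works there.

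\textbf{The fallback does not repair this.} Take $\eta=u-w_*(\cdot+\xi)$ on $(0,\min\{h(t),l_*-\xi\})$. Its right-end value vanishes whenever $h(t)=l_*-\xi$, and has no known sign when $h(t)>l_*-\xi$. So Lemma \ref{lemma0} does not apply, and nothing fixes the zero count to begin with. Also, $h\ge l^s$ along a sequence of times would only bound $\limsup h$, not $\liminf h$.

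\textbf{The missing idea.} You need to give up ordering and instead count intersections with wave pieces that are not below $u$.
\begin{itemize}
\item The paper uses two-sided pieces $v_L(t,x)=\hat q_c(-x+ct+L)$. They equal $\delta_0$ at both ends, have peak $\delta$ at $I(t)=ct+L$, move right, and enter through $x=0$.
\item After lowering the peak to $\delta-\epsilon$, the difference $\eta_\epsilon=u-v_{\epsilon,L}$ is negative at $x=0$. It is positive at $\min\{H_\epsilon(t),h(t)\}$ wherever $h$ is.
\item Just after entry, $\eta_\epsilon$ has exactly one nondegenerate zero, because $u_x(t,0)>0$ (Corollary \ref{c3.2}). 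Hence $\mathcal Z_\epsilon\equiv1$ by Lemma \ref{lemma0}.
\item Letting $\epsilon\to0$, this forces $u<v_L$ on $[0,I(t))$ just before any time $\tilde t$ with $h(\tilde t)=I(\tilde t)<l^c$ and $h'(\tilde t)<0$.
\item A sliding comparison with re-anchored pieces $v_{L^*}$ then keeps $u<\delta$ on $[0,h(t))$ and $h'<0$ for all $t\ge\tilde t$.
\item Choosing $L_s=h(s)-cs$ makes every large $s$ with $h(s)<l^c$ and $h'(s)<0$ such a time. So $h$ becomes eventually monotone, and Lemma \ref{l8} rules out $\liminf_{t\to\infty}h(t)<l_*$.
\end{itemize}
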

	
	\begin{proof}
	Corollary \ref{c3.2} implies that there exists $t_0>0$ such that 
	\begin{equation}\label{>delta0}
		u(t,x)>\delta_0,\ u_x(t,0)>0  \mbox{ for } t\geq t_0 \mbox{ and } x\in (0, h(t)].
	\end{equation}
		For convenience of notation and without loss of generality,  in the analysis below we assume $t_0=0$.

		\textbf{Step 1.} We introduce two auxiliary functions.
		
		In this step we obtain two auxiliary functions $\hat q_c(x)$ and $q_{\epsilon, c}(x)$ for small $c>0$ and $\epsilon>0$. They will be used to construct functions that can be easily compared with the density function $u(t,x)$ from \eqref{1.1} via some zero number arguments based on Lemma \ref{lemma0}.
		
		It is easily seen that the solution $w(x)$ of \eqref{l0} can be extended from $x\in [0, \tilde l_0]$ to $x\in [0, 2\tilde l_0]$ to obtain a function $\hat w(x)$ which is symmetric about $x=\tilde l_0$, and the extended function $\hat w$ satisfies
		\begin{equation}\label{2l0}
			\begin{cases}
				d \hat w'' + f(\hat w) = 0 \mbox{ for } x \in (0, 2\tilde l_0), \\
				-\hat w'(2\tilde l_0-x)=\hat w'(x) > 0 \mbox{ for }  x \in [0, \tilde l_0),\ \\
				\hat w(0) =\hat w(2\tilde l_0)= {0}, \quad \hat w(\tilde l_0) = \delta, \quad \hat w'(\tilde l_0) = 0.
			\end{cases}
		\end{equation}

		Through the formulas $q(z)=\hat w(z),\ p(z)=q'(z)$, the function $\hat w(x)$  gives a trajectory of the first-order system
		\begin{equation}\label{c=0}
			q'(z) = p, \quad p' (z)= -\frac{1}df(q),
		\end{equation}
		in the $q$-$p$ plane,
		which connects the points $(q,p)=(0, q'(0))$ and $(q,p)=(\delta, 0)$  in the upper half plane, and then symmetrically continues from $(q,p)=(\delta, 0)$ to 
		$(q,p)=(0, -q'(0))$ in the lower half plane. 
		
		As in the proof of Lemma \ref{lem-lc}, we now consider the system \eqref{s2}  with $c>0$ small and view it as a  perturbation  of \eqref{c=0}. Similar to the arguments in the proof of Lemma \ref{lem-lc}, there exists $\hat\epsilon_0>0$ small so that for each $c\in (0, \hat\epsilon_0]$, \eqref{s2} has a trajectory close to the one generated by $\hat w(x)$, which gives rise to a triple $(l_c, l^c, \hat q_c(x))$ that solves the following system:
		\begin{equation}\label{0q1}
			\left\{
			\begin{array}{ll}
				d \hat q'' - c \hat q' + f(\hat q) = 0, & x \in (-l_c,l^c), \\
				\hat q'(x) > 0, & x \in [-l_c, 0),\\
				\hat q'(x) < 0, & x \in (0, l^c],\\
				\hat q( -l_c)=\hat q(l^c) = \delta_0, \quad \hat q(0) = \delta, \quad \hat q'(0)=0.
			\end{array}
			\right.
		\end{equation}
		
		If $\delta$ is replaced by $\delta-\epsilon$ with $\epsilon\in (0, \delta-\delta_0)$ sufficiently small, then the unique solution of \eqref{s2} with $(q(0), p(0))=(\delta-\epsilon, 0)$ similarly gives rise to a triple 	
		\((l_{c,\epsilon},l^{c,\epsilon}, q_{c, \epsilon}(x))\), with $q=q_{c,\epsilon}(x)$ 
		satisfying
		\[
		\left\{
		\begin{array}{ll}
			d q'' - c q' + f(q) = 0, & x \in (-l_{c,\epsilon},l^{c,\epsilon}), \\
			q'(x) < 0, & x \in (0,l^{c,\epsilon}],\\
			q'(x) > 0, & x \in [-l_{c,\epsilon},0),\\
			q(-l_{c,\epsilon}) = q(l^{c,\epsilon})=\delta_0,\quad q(0) = \delta-\epsilon,\quad \quad q'(0)=0.	
		\end{array}
		\right.
		\]
		Moreover,
		\[ \lim_{\epsilon\to 0}(l_{c,\epsilon}, l^{c,\epsilon})=(l_c, l^c),\ 
		\lim_{c,\epsilon\to 0} (l_{c,\epsilon}, l^{c,\epsilon})=(l_*, l_*),
		\]
		and
		\begin{align}\label{3.69e}
			\max_{x\in[-l_{c,\epsilon},\,l^{c,\epsilon}]\cap[-l_c,\,l^c]}
			|q_{c,\epsilon}(x)-q_c(x)| \longrightarrow 0
			\quad \text{as } \epsilon\to 0.
		\end{align}

		\medskip

		\textbf{Step 2.} We prove a zero number conclusion (see \eqref{Z=1}).
		
		With \((l_{c,\epsilon},l^{c,\epsilon}, q_{c,\epsilon}(x))\) given in Step 1, we fix  \(L\le -l_{c,\epsilon}\) and define
		\[
		v_{\epsilon,L}(t,x):
		= q_{c,\epsilon}(-x+ct+L) \mbox{ for } \ t\in\mathbb R,\			ct+L-l^{c,\epsilon}\le x\le ct+L+l_{c,\epsilon}.
		\]
		Clearly $v_{\epsilon,L}$ satisfies 
		\[
		\partial_tv_{\epsilon,L}=d\partial_{xx}v_{\epsilon,L}+f(v_{\epsilon,L}) \mbox{ for } \ t\in\mathbb R,\	ct+L-l^{c,\epsilon}< x< ct+L+l_{c,\epsilon}.
		\]
		We will call $v_{\epsilon,L}(t,x)$ a traveling wave piece of speed $c$, whose
		left boundary, peak location and right boundary are given respectively by
		\[
		G_\epsilon(t):=ct+L-l^{c,\epsilon},\qquad
		I_\epsilon(t):=ct+L,\qquad
		H_\epsilon(t):=ct+L+l_{c,\epsilon}.
		\]
		
		Define
		\[\begin{cases}
			\eta_\epsilon(t,x):=u(t,x)-v_{\epsilon,L}(t,x),\\
			\xi_\epsilon(t):=\min\{H_\epsilon(t),\,h(t)\},\\
			t_{\epsilon,L}^0:=\frac{-L-l_{c,\epsilon}}{c},\ 
			T_{\epsilon,L}:=\frac{l^{c, \epsilon}-L}{c}.
		\end{cases}
		\]
		Then clearly
		\begin{equation}\label{t-T}
			\begin{cases}
				0\leq t_{\epsilon,L}^0<T_{\epsilon,L},\ G_\epsilon(T_{\epsilon,L})=H_\epsilon(t_{\epsilon,L}^0)=0,\ \ 	\\
				G_\epsilon(t)< 0< H_\epsilon(t) \mbox{ and }	
				\eta_\epsilon(t, 0)<0<\eta_\epsilon\big(t,\xi_\epsilon(t)\big)
				\quad \text{for } t\in(t_{\epsilon,L}^0,\,T_{\epsilon,L}).
			\end{cases}
		\end{equation}
		Moreover, $\eta=\eta_\epsilon$ satisfies
		\[
		\eta_t=d\eta_{xx}+c(t,x)\eta \mbox{ for } t\in(t_{\epsilon,L}^0,\,T_{\epsilon,L}),\  0< x< \xi_\epsilon(t),
		\]
		with $c(t,x)$ some $L^\infty$ function.
		Let \(\mathcal Z_\epsilon(t)\) denote the number of zeros of
		\(\eta_\epsilon(t,\cdot)\) in the interval
		\(\Omega(t):=(0,\xi_\epsilon(t))\).
		By \eqref{t-T} and Lemma \ref{lemma0}, $\mathcal Z_\epsilon(t)$ is finite and nonincreasing for $t\in(t_{\epsilon,L}^0,\,T_{\epsilon,L})$. We claim  that
		\begin{align}\label{Z=1}
			\mathcal{Z}_\epsilon(t) \equiv 1 \mbox{ for } t\in (t_{\epsilon,L}^0,T_{\epsilon,L}).
		\end{align}

		To simplify notations, in the following discussion, we will drop the subscript $\epsilon$ and denote
		\[
		(t_{\epsilon,L}^0,\,T_{\epsilon,L},\ \xi_\epsilon, \ \eta_\epsilon,\ G_\epsilon, H_\epsilon) \mbox{ by } (t_{L}^0,\,T_{L},\ \xi, \ \eta,\ G, H).
		\]
		It is clear that $\eta(t_L^0,0)=0$ and by \eqref{>delta0} (with the assumption $t_0=0$),
		\[
		\eta_x(t_L^0,0)=u_x(t_L^0  ,0)-(v_L)_x(t_L^0,0)=u_x(t_L^0  ,0)+q_{c,\epsilon}'(-l_{c,\epsilon})>0.
		\]
		Hence, for $0< \epsilon_0\ll 1$ we have
		\[
		0<\xi(t)=H(t)\ll 1,\  \eta_x(t,x)>0 \mbox{ for } t\in(t_L^0,t_L^0+\epsilon_0),\ x\in [0, \xi(t)].
		\]
		This implies, due to
		\[
		\eta(t,0)<0<\eta(t,\xi(t)) \mbox{ for } t\in(t_L^0,t_L^0+\epsilon_0),
		\]
		 that for every
		$t\in(t_L^0,t_L^0+\epsilon_0]$, $\eta(t,\cdot)$ has a unique nondegenerate zero  $x_t\in (0, \xi(t))=(0,H(t))$, and 
		\[
		\eta(t,x)<0 \text{ for } x\in[0,x_t),\qquad
		\eta(t,x_t)=0,\qquad
		\eta(t,x)>0 \text{ for } x\in(x_t,\xi(t)].
		\]
		Hence, $\mathcal Z_\epsilon(t)\equiv1$ for
		$t\in(t_L^0,t_L^0+\epsilon_0]$. On the other hand, by \eqref{t-T} we see that $\mathcal Z_\epsilon(t)\geq 1$ for $t\in(t_L^0,T_L)$. We may now apply the nonincreasing property of $\mathcal Z_\epsilon(t)$ to conclude that \eqref{Z=1} holds. This completes Step 2.

		\textbf{Step 3.} We prove a monotonicity property of $h(t)$ (see \eqref{h-prop}).
		
		Parallel to Step 2, using \((l_{c},l^{c}, \hat q_{c}(x))\) given in Step 1, we fix  \(L\le -l_{c}\) and define
		\begin{equation}\label{v_L}
			v_{L}(t,x):
			= \hat q_{c}(-x+ct+L) \mbox{ for } \ t\in\mathbb R,\			ct+L-l^{c}\le x\le ct+L+l_{c}.
		\end{equation}
		Clearly $v_{L}$ satisfies 
		\[
		\partial_tv_{L}=d\partial_{xx}v_{L}+f(v_{L}) \mbox{ for } \ t\in\mathbb R,\	ct+L-l^{c}< x< ct+L+l_{c}.
		\]
		So $v_{L}(t,x)$ is a traveling wave piece of speed $c$, whose
		left boundary, peak location and right boundary are given respectively by
		\[
		G(t):=ct+L-l^{c},\qquad
		I(t):=ct+L,\qquad
		H(t):=ct+L+l_{c}.
		\]
		Define
		\[\begin{cases}
			\eta(t,x):=u(t,x)-v_{L}(t,x),\\
			\xi(t):=\min\{H(t),\,h(t)\},\\
			t_{L}^0:=\frac{-L-l_{c}}{c},\ 
			T_{L}:=\frac{l^{c}-L}{c}.
		\end{cases}
		\]
		Then 
		\begin{equation}\label{t-T-a}
			\begin{cases}
				0\leq t_{L}^0<T_{L},\ G(T_{L})=H(t_{L}^0)=0,\ \ 	\\
				G(t)< 0< H(t),\
				\eta(t, 0)<0
				\quad \text{for } t\in(t_{L}^0,\,T_{L}),
			\end{cases}
		\end{equation}
		and 
		\[
		\eta_t=d\eta_{xx}+c(t,x)\eta \mbox{ for } t\in(t_{L}^0,\,T_{L}),\  0< x< \xi(t),
		\]
		with $c(t,x)$ some $L^\infty$ function.
		
		We  show that $h(t)$ has the following monotonicity property:
		\begin{equation}\label{h-prop}
			\begin{cases} \mbox{ if there exists $\widetilde{t_L}\in (0,T_L)$ such that
					$h(\widetilde{t_L})=I(\widetilde{t_L})<l^c$ and $
					h'(\widetilde{t_L})<0$,}\\
				\mbox{ then
					$
					h'(t)<0 \quad \text{for all } t\geq \widetilde{t_L}.$}\end{cases}
		\end{equation}
		
		Due to the free boundary condition, the assumption $h'(\widetilde{t_L})< 0$ leads to
		\[
		u_x(\widetilde{t_L},h(\widetilde{t_L}))=-\frac d\delta h'(\widetilde{t_L})=:\tilde\sigma>0.
		\]
		In view of $I(\tilde t_L)=h(\tilde t_L)$, this implies the existence of  some $\epsilon_1>0$ sufficiently small such that
		\[
		u_x(t,x)=\tilde\sigma+o_{\epsilon_1}(1)>0
		\quad \mbox{for }\
		t\in [\widetilde{t_L}-\epsilon_1,\widetilde{t_L}],
		\quad
		x\in [I(t)-\epsilon_1,\,h(t)],
		\]
		where $o_{\epsilon_1}(1)$ denotes a generic quantity which goes to 0 as $\epsilon_1\to 0$.
		Moreover, by shrinking $\epsilon_1>0$ we may also assume that
		\begin{equation}\label{I<h}
			I(t)<I(\tilde t_L)=h(\tilde t_L)<h(t) \mbox{ for } t\in [\widetilde{t_L}-\epsilon_1,\widetilde{t_L}).
		\end{equation}
		Since $u(t,h(t))=\delta=v_L(t,I(t))$ and $(v_L)_x(t,I(t))\equiv 0$, it follows that
		\begin{equation}\label{3.71a}\begin{aligned}
			\eta(t,x)&=u(t,x)-v_L(t,x)=[\delta-(\tilde\sigma+o_{\epsilon_1}(1))(h(t)-x)]-[\delta+o_{\epsilon_1}(1)(I(t)-x)]\\
			&\leq  [\delta-(\tilde\sigma+o_{\epsilon_1}(1))(I(t)-x)]-[\delta+o_{\epsilon_1}(1)(I(t)-x)]\\
			&= -(\tilde\sigma+o_{\epsilon_1}(1))(I(t)-x)\\
			&<0
			\ \mbox{ for }
			t\in [\widetilde{t_L}-\epsilon_1,\widetilde{t_L}],
			\ 
			x\in [I(t)-\epsilon_1,I(t)).
			\end{aligned}
		\end{equation}
		For 
		$t\in [\widetilde{t_L}-\epsilon_1,\widetilde{t_L})$, let $\widetilde{\mathcal{Z}}(t)$ denote the number of zeros of $\eta(t,\cdot)$ in $(0,I(t)-\epsilon_1)$. 
		By \eqref{t-T-a} and \eqref{3.71a}, we see that
		\begin{equation}\label{<0}
			\eta(t,0)<0,
			\qquad
			\eta(t,I(t)-\epsilon_1)<0 \ \mbox{ for }
			t\in [\widetilde{t_L}-\epsilon_1,\widetilde{t_L}),
		\end{equation}
		and then by Remark \ref{rk-nondege} there exists $\epsilon_2\in (0,\epsilon_1)$ such that $\eta(t,\cdot)$ has no degenerate zeros
		in $(0, I(t)-\epsilon_1)$ for $t\in [\widetilde{t_L}-\epsilon_2,\widetilde{t_L})$.	Hence 		
		\[
		\widetilde{\mathcal{Z}}(t)\equiv \text{constant} \mbox{ for }
		t\in [\widetilde{t_L}-\epsilon_2,\widetilde{t_L}).
		\]
		
		{\bf Claim 1:}
		$\widetilde{\mathcal{Z}}(t)\equiv 0$ for $t\in [\widetilde{t_L}-\epsilon_2,\widetilde{t_L})$.
		
		Otherwise, due to \eqref{<0}, there are at least two nondegenerate zeros $x_t,y_t\in (0,I(t)-\epsilon_1)$ satisfying
		\begin{equation}\label{x_t-y_t}
			\eta(t,x_t)=\eta(t,y_t)=0,
			\qquad
			\eta_x(t,x_t)\neq 0,\ \eta_x(t,y_t)\neq 0 \ \mbox{ for every }
			\
			t\in [\widetilde{t_L}-\epsilon_2,\widetilde{t_L}).
		\end{equation}
		We show next that this leads to a contradiction to \eqref{Z=1}. 
		
		Firstly we note that $H(\tilde t_L)>I(\tilde t_L)=h(\tilde t_L)>0=H(t^0_L)$, and hence $t^0_L<\tilde t_L<T_L$. As $t^0_{\epsilon, L}\to t^0_L$ and $T_{\epsilon, L}\to T_L$ as $\epsilon\to 0$, we thus have
		\begin{equation}\label{ep-1}
			t^0_{\epsilon, L}<\tilde t_L-\epsilon_2<T_{\epsilon, L}
		\end{equation}
		provided that both $\epsilon$ and $\epsilon_2$ are sufficiently small. 
		
		Secondly, by \eqref{3.69e} we see that $\eta_\epsilon(t,x):=u(t,x)-v_{\epsilon,L}(t,x)$ is close to $\eta(t,x)$ for sufficiently small $\epsilon>0$ and therefore,
		due to \eqref{x_t-y_t}, for such $\epsilon$, 
		\begin{equation}\label{ep-2}
			\mbox{$\eta_\epsilon(\tilde t_L-\epsilon_2,\cdot)$ has at least two zeros in	$(0,I(\tilde t_L-\epsilon_2)-\epsilon_1)$.}
		\end{equation}
		
		Thirdly due to \eqref{I<h} and $I(t)<H(t)$ we obtain $I(t)<\xi(t)$ for $t\in [\widetilde{t_L}-\epsilon_1,\widetilde{t_L})$. In particular,
		$I(\tilde t_L-\epsilon_2)<\xi(\tilde t_L-\epsilon_2)$. Since $\xi_\epsilon(t)=\min\{H_\epsilon(t),\,h(t)\}\to\xi(t)$ as $\epsilon\to 0$, we can conclude that
		\[
		I(\tilde t_L-\epsilon_2)<\xi_\epsilon(\tilde t_L-\epsilon_2) \mbox{ for all  small } \epsilon.
		\]
		Therefore \eqref{ep-2} implies that 	$\eta_\epsilon(\tilde t_L-\epsilon_2,\cdot)$ has at least two zeros in $(0,\xi_\epsilon(\tilde t_L-\epsilon_2))$.
		Recalling the definition of 	$\mathcal Z_\epsilon(t)$ in Step 2, we thus have 
		\[
		\mathcal Z_\epsilon(\tilde t_L-\epsilon_2)\geq 2.
		\]
		In view of \eqref{ep-1}, this contradicts \eqref{Z=1}.
		We have thus proved  Claim 1.

		{\bf Claim 2.} 	Claim 1 and \eqref{3.71a} imply $h'(t)<0$ for all $t>\tilde t_L$.
		
		Clearly Claim 1  and \eqref{3.71a} imply
		\begin{equation}\label{t_L}
			\eta(t,x)=u(t,x)-v_L(t,x)<0 \ \mbox{ for } \
			t\in [\widetilde{t_L}-\epsilon_2,\widetilde{t_L}],
			\quad
			x\in [0,I(t)).
		\end{equation}
		Therefore to prove Claim 2, it suffices to show that 
		\[
		\mbox{\eqref{t_L} implies $h'(t)<0$ for all $t>\tilde t_L$.}
		\]
		Otherwise, due to our assumption in \eqref{h-prop} that $h'(\tilde t_L)<0$,  there exists $T_0>\tilde t_L$ such that
		\[
		h'(t)<0 \quad \text{for } t\in[\tilde t_{L},T_0),
		\qquad
		h'(T_0)=0.
		\]
		
		We first show that
		\begin{align}\label{3.68b}
			u(t,x)<\delta
			\quad \text{for all } t\in[\tilde t_{L},T_0],
			\ \ x\in[0,h(t)).
		\end{align}
		Since $h'(\tilde t_L)<0$ and hence $u_x(\tilde t_L, h(\tilde t_L))>0$, we obtain $h(t)<h(\tilde t_L)=I(\tilde t_L)<I(t)$ for $t>\tilde t_L$ but close to $\tilde t_L$, and
		\[
		u(t,x)<u(t, h(t))=\delta \mbox{ for } t\in [\tilde t_L, \tilde t_L+\tilde\epsilon_1],\ x\in [h(\tilde t_L)-\tilde\epsilon_2, h(t))
		\]
		provided that $\tilde\epsilon_2>\tilde\epsilon_1>0$ are sufficiently small. Moreover, by \eqref{t_L} we see that
		\[
		u(\tilde t_L, x)<\delta \mbox{ for } x\in [0, h(\tilde t_L)-\tilde\epsilon_2]=[0, I(\tilde t_L)-\tilde\epsilon_2].
		\]
		Therefore, we can find $\tilde\epsilon_0\in (0, \tilde\epsilon_1)$ such that
		\[
		u(t, x)<\delta \mbox{ for } t\in[\tilde t_L, \tilde t_L+\tilde\epsilon_0],\ x\in [0, h(t)).
		\]
		
		Assume to the contrary that \eqref{3.68b} is false.
		Then there exist $T_1\in(\tilde t_{L}+\tilde\epsilon_0,T_0]$ and
		$x_0\in(0,h(T_1))$ such that
		\begin{equation}\label{T1}
			u(T_1,x_0)=\delta \mbox{ and } u(t,x)<\delta
			\quad \text{for } t\in[\tilde t_{L},T_1),
			\ \ x\in[0,h(t)).
		\end{equation}

		For $c>0$ small, define
		\[
		L^*:=h(T_1)-cT_1,
		\]
		and let $v_{L^*}(t,x)$ be the traveling wave piece of speed $c$ given by \eqref{v_L} but with $L$ replaced by $L^*$, whose  left boundary, peak location 
		and right
		boundary are respectively given by
		\[
		G^*(t):=ct+L^*-l^{c},\qquad
		I^*(t):=ct+L^*,\qquad
		H^*(t):=ct+L^*+l_{c}.
		\]
		Then
		\[\begin{cases}
			I^*(T_1)=h(T_1)<h(\tilde t_L)<l^c,\\
			G^*(T_1)=I^*(T_1)-l^c<0,\\
			L^*=h(T_1)-cT_1<
			h(\tilde t_{L})-cT_1 =c(\tilde t_{L}-T_1)+L
			< L,
		\end{cases}
		\]
		and
		\[
		h(t)>h(T_1)=I^*(T_1)>I^*(t) ,\ 0>G^*(T_1)>G^*(t) \mbox{ for } t\in (\tilde t_L, T_1).
		\]
		Let 
		\[
		t_1:=-L^*/c=T_1-h(T_1)/c, \ \hat t_1:=\max\{t_1, \tilde t_L\}.
		\]
		Then $0<t_1\leq \hat t_1<T_1$ and $I^*(\hat t_1)\geq I^*(t_1)=0$. 
		
		If $t_1<\tilde t_L$ and hence $\hat t_1=\tilde t_L$, then
		since $\hat q_c'(x)<0$ for $x\in (0, l^c]$ and $G(\tilde t_L)=I(\tilde t_L)-l^c=h(\tilde t_L)-l^c<0$, it follows from $L^*<L$ that
		\[
		v_L(\tilde t_L,x)<v_{L^*}(\tilde t_L,x) \mbox{ for } 0\leq x\leq I^*(\tilde t_L)=I^*(\hat t_1).
		\]
		Hence by \eqref{t_L} we have
		\begin{equation}\label{hat-t1}
		u(\hat t_1,x)<v_L(\hat t_1, x)<v_{L^*}(\hat t_1,x) \mbox{ for } x\in (0, I^*(\hat t_1)]\subset (0,I(\hat t_1)).
		\end{equation}
		
		If $t_1\geq \tilde t_L$ and hence $\hat t_1=t_1$,  we have $I^*(\hat t_1)=0$ and
		\begin{equation}\label{hat-t1-2}
		u(\hat t_1, I^*(\hat t_1))=u( \hat t_1, 0)=\delta_0<\delta=v_{L^*}(\hat t_1, I^*(\hat t_1)).
		\end{equation}
		
		By the differential equations satisfied by $u$ and $v_{L^*}$, \eqref{hat-t1}, \eqref{hat-t1-2} and \eqref{T1}, we see that
		$\eta^*:=u-v_{L^*}$ satisfies
		\[
		\begin{cases}
			\eta^*_t - d\eta^*_{xx} - C(t,x)\eta^*= 0,
			& \hat t_1< t \leq T_1,\ 0 < x < I^*(t), \\[2pt]
			\eta^*(t,0) < 0,\quad
			\eta^*\bigl(t, I^*(t)\bigr) \leq 0,
			& \hat t_1 < t \leq T_1, \\[2pt]
			\eta^*(\hat t_1, x)< 0,
			& 0\leq x\leq I^*(\hat t_1),
		\end{cases}
		\]
		where $C(t,x)$ is some $L^\infty$ function. Since $\eta^*(T_1,I^*(T_1))=0$, applying the strong maximum principle and the
		Hopf boundary lemma we obtain
		\[
		\eta^*(T_1, x) < 0 \quad \text{for } x \in [0, I^*(T_1)),
		\qquad
		\eta^*_x\bigl(T_1, I^*(T_1)\bigr) > 0.
		\]
		Consequently,
		\begin{align*}
			u(T_1, x)
			&< v_{L^*}(T_1, x)
			= q_c(-x +I^*(T_1))
			< \delta
			\quad \text{for } x \in [0, h(T_1))=[0,I^*(T_1)),
		\end{align*}
		and
		\[
		u_x\bigl(T_1, h(T_1)\bigr) =\eta^*_x\bigl(T_1, I^*(T_1)\bigr)> 0.
		\]
		This contradicts the assumption that
		$u(T_1, x_0) = \delta$ with $x_0 \in (0, h(T_1))$.
		Hence, \eqref{3.68b} holds.

		The validity of \eqref{3.68b} allows us to repeat the above argument with $T_1$ replaced by $T_0$ to obtain
		$u_x\bigl(T_0, h(T_0)\bigr) > 0$. Hence
		\[
		h'(T_0)
		=-\frac{d}{\delta}\,
		u_x\bigl(T_0, h(T_0)\bigr)
		< 0,
		\]
		which contradicts the assumption $h'(T_0)=0$.
		Therefore   $h'(t)<0$ for all $t\ge \tilde t_L$. This proves the desired property \eqref{h-prop} and
		Step 3 is now completed.
		\medskip
		
		\textbf{Step 4.} We prove that $\liminf_{t\to\infty} h(t)\geq l_*$.
		
		Suppose, by contradiction, that $\liminf_{t\to\infty} h(t)< l_*$. Then, due to
		\( \lim_{c \to 0} l^c = l_* \), for $c>0$ sufficiently small we have
		\[
		h_*:=\liminf_{t\to\infty} h(t)< l^c.
		\]
		We are going to show that this  leads to  a contradiction.
		
		Let $M>0$ be such that $h(t)\leq M$ for all $t\geq 0$, and define
		$t^*:=\frac{M+l_c+l^c}{c}$.
		Then,
		\[
		h(t)-ct\leq M-ct^*=-l_c-l^c<    -	l_c \ \mbox{  for all $t\geq t^*$}.
		\]
		Consequently, for any $s\geq t^*$, if $h(s)\in (0,l^c)$, then 
		\[
		L_s:=h(s)-cs<l^c-ct^*<-l_c. 
		\]
		We now apply \eqref{h-prop} with $L=L_s$ and see that 
		\begin{equation}\label{Ls}
			\begin{cases} \mbox{ if there exists $\widetilde t_{L_s}\in (0,T_{L_s})$ such that
					$h(\widetilde t_{L_s})=I(\widetilde t_{L_s})<l^c$ and $
					h'(\widetilde t_{L_s})<0$,}\\
				\mbox{ then
					$
					h'(t)<0 \quad \text{for all } t\geq \widetilde t_{L_s}.$}\end{cases}
		\end{equation}
		Our choice of $L_s$ and the assumption $h(\tilde t_{L_s})\in (0, l^c)$ imply 
		\[
		\mbox{$\widetilde t_{L_s}=s$ and $T_{L_s}=	\frac{l^c-L_s}c=s+\frac{l^c-h(s)}c>s.$}
		\]
		Therefore \eqref{Ls} 
		implies that for any $s\geq t^*$, the conditions
		\begin{equation}\label{3.71d}
			h(s)\in (0,l^c) \mbox{ and } h'(s)<0 
		\end{equation}
		imply $  h'(t)< 0 \mbox{ for all } t\geq s$, and therefor $m=\lim_{t\to\infty} h(t)=h_*<l_*$.  But by Lemma~\ref{l8} in such a case we must have $m=l_*$. This contradiction shows that \eqref{3.71d} cannot happen, in other words, the following conclusion holds:
		\[
		s\geq t^* \mbox{ and } h(s)\in (0,l^c) \implies h'(s)\geq 0.
		\]
		
		We now consider $h^*:=\limsup_{t\to\infty} h(t)$. If $h^*<l_*$ then $h^*<l^c$ for small $c>0$ and the above conclusion then implies  $h(t)$ is
		nondecreasing for all  large $t$, and hence
		$m=\lim_{t\to\infty} h(t)$ exists.
		By Lemma~\ref{l8}, it follows that $\lim_{t\to\infty} h(t)=l_*$,
		which contradicts the assumption $h_*< l^*$.
		
		If $h^*\geq l_*$, then our assumption $h_*=\liminf_{t\to\infty} h(t)< l_*$ implies the existence of some large $s>t^*$ such that $h(s)\in (0, l^c)$ and $h'(s)<0$, which contradicts our earlier conclusion that \eqref{3.71d} cannot occur. 
		
		Therefore $h_*<l_*$ always leads to a contradiction, as desired. It follows that $ h_*=\liminf_{t\to\infty} h(t)\geq l_*$,
		and the proof is now complete.	
	\end{proof}

	Summarising the conclusions on $h(t)$,  we have 
	
	\begin{proposition}\label{prop-h}
		Under the conditions of Theorem \ref{th1.2b}, we have the following conclusions:
		
		\begin{itemize}
			\item[\rm (i)] If the maximal existence time $T_{\max}<\infty$, then $\lim_{t\to T_{max}}h(t)=0$.
			\item[\rm (ii)] If $T_{max}=\infty$, then either $\lim_{t\to\infty}h(t)=\infty$, or $\lim_{t\to\infty} h(t)=l_*$, where $l_*$ is given by \eqref{l*}.
		\end{itemize}
		
	\end{proposition}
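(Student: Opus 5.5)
Proposition \ref{prop-h} is essentially a compilation of the lemmas already proved in this section, so the plan is to assemble them in the right order.

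For part (i), there is nothing new to prove. If $\delta_0\ge\delta$, Remark \ref{remark2.9} (equivalently Lemma \ref{le2.8}) gives $T_{\max}=\infty$, so (i) holds vacuously. If $0\le\delta_0<\delta$, the implication (i)$\Rightarrow$(ii) of Lemma \ref{lemma2.10} says exactly that $T_{\max}<\infty$ forces $\lim_{t\to T_{\max}^-}h(t)=0$. This is Step 1 of that lemma's proof.

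For part (ii), I split according to $\delta_0$. When $\delta_0\ge\delta$, Lemma \ref{le3.1} gives $h(t)\to\infty$, so the first alternative holds. When $0\le\delta_0<\delta$ and $T_{\max}=\infty$, Lemma \ref{lemma3.4} leaves only two possibilities. The first is $h(t)\to\infty$, which is again the first alternative. The second is
\[
0<\liminf_{t\to\infty}h(t)\le\limsup_{t\to\infty}h(t)<\infty .
\]
In that case the hypotheses of Theorem \ref{key1} hold, so I need to establish $\lim_{t\to\infty}h(t)=l_*$.

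Theorem \ref{key1} itself follows from the two one-sided bounds proved above. Lemma \ref{lemma1.2} gives $\limsup_{t\to\infty}h(t)\le l_*$, and Lemma \ref{liminf-h} gives $\liminf_{t\to\infty}h(t)\ge l_*$. Together these yield
\[
\lim_{t\to\infty}h(t)=l_* .
\]
Lemma \ref{l8} then confirms consistency and also provides the uniform convergence $u(t,\cdot)\to w_*$, although the proposition does not need it.

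The only point that needs care is checking that each lemma's hypotheses are met, in particular $\delta\in(\hat\theta_f,1)$ and the case split on $\delta_0$. The substantive difficulty is the $\liminf$ bound, which relies on the zero-number argument with the traveling wave pieces, but that is already contained in Lemma \ref{liminf-h}. So the proof reduces to this bookkeeping.
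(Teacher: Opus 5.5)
Your proposal is correct and matches the paper, which states Proposition \ref{prop-h} without a separate proof, as a summary of the preceding results. Specifically, it assembles Remark \ref{remark2.9} and Lemmas \ref{le2.8} and \ref{le3.1} for $\delta_0\ge\delta$, Lemma \ref{lemma2.10} for part (i) when $\delta_0<\delta$, and Lemma \ref{lemma3.4} together with Theorem \ref{key1} (obtained from Lemmas \ref{lemma1.2} and \ref{liminf-h}) for part (ii), which is exactly your case split and bookkeeping.
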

	
	\subsection{Behaviour of $u(t,x)$ when $T_{max}=\infty$.} Throughout this subsection, we assume $T_{max}=\infty$.
	By Lemma \ref{l8},
	\[
	\lim_{t \to \infty} h(t)=l_* \implies 	\lim_{t \to \infty} \sup_{x \in [0, h(t)]} \left| u(t, x) - w_*(x) \right| = 0,
	\]
	where  $w_*$ is given by \eqref{w}.
	
	Next we consider the case  $\lim_{t\to\infty} h(t)=\infty$ in Proposition \ref{prop-h} (ii), and prove the following conclusion.
	
	\begin{lemma}
		In Proposition \ref{prop-h} (ii),
		\[
		\lim_{t\to\infty} h(t)=\infty\implies \lim_{t \to \infty} u(t, x) = v(x)
		\	\text{locally uniformly in} \ [0,\infty),\]
		where $v$ is the unique solution of \eqref{1.3}.
	\end{lemma}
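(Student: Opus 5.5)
The plan is to squeeze $u$ between an upper and a lower solution of the Dirichlet problem on a fixed spatial domain. Both comparison functions should converge to $v$ as $t\to\infty$, and then as the domain length tends to infinity. The key point is that once $h(t)$ is large, the condition at $x=h(t)$ is compatible with comparisons on fixed intervals. This avoids any use of the (unavailable) order-preserving property \eqref{order} of the full free boundary problem. By Corollary \ref{c3.2}, Lemma \ref{le2.5} and $h(t)\to\infty$, we know that $|h'|\le C_2$ and $0<u\le C_1$. Lemma \ref{nlower} gives $u(t,x)\ge \tilde q_c(x+x_0)$ for $t\ge t_0$. Both cases $\delta_0<\delta$ and $\delta_0\ge\delta$ are treated together.

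\emph{Upper bound.} Let $K:=\max\{C_1,1,\delta_0\}$ and let $\bar u$ solve $\bar u_t=d\bar u_{xx}+f(\bar u)$ for $x>0$, with $\bar u(t,0)=\delta_0$ and $\bar u(0,x)=K$. Since $K$ is an upper solution, $\bar u$ is nonincreasing in $t$. Since $v\le \max\{1,\delta_0\}\le K$ is a stationary solution, $\bar u\ge v$. Hence $\bar u(t,\cdot)\downarrow V$ in $C^2_{loc}$, where $V$ is a stationary solution with $v\le V\le K$. To see $V(\infty)=1$, we compare $\bar u$ for large $x$ with the spatially homogeneous ODE $p'=f(p)$, $p(0)=K$, which decreases to $1$. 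The uniqueness in \eqref{1.3} then gives $V=v$. Since $v(\infty)=1>\delta$, there is $X$ with $v(x)>\delta$ for $x\ge X$. Choose $T$ with $h(t)\ge X$ for $t\ge T$. Then $\bar u(t-T,h(t))\ge v(h(t))>\delta=u(t,h(t))$, and $\bar u(0,\cdot)=K\ge u(T,\cdot)$. The comparison principle on $\{t\ge T,\ 0\le x\le h(t)\}$ yields $u(t,x)\le \bar u(t-T,x)$, so $\limsup_{t\to\infty}u\le v$ locally uniformly.

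\emph{Lower bound.} Fix $L\ge L^*$, with $L^*$ as in the proof of Lemma \ref{lemma3.4}, so that $\tilde q_c(L+x_0)\ge\theta^{**}$. Take $T$ with $h(t)\ge L$ for $t\ge T$. Let $W_L$ solve $W_t=dW_{xx}+f(W)$ on $(0,L)$, with $W(t,0)=\delta_0$, $W(t,L)=\tilde q_c(L+x_0)$ and $W(0,x)=\tilde q_c(x+x_0)$. This initial datum is a stationary lower solution, because $\tilde f\le f$ and $\tilde q_c(x_0)=m^*\le\delta_0$. Hence $W_L$ is nondecreasing in $t$ and converges to a stationary $W_L^\infty$ with $W_L^\infty\ge \tilde q_c(\cdot+x_0)$. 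Moreover $u(T+t,\cdot)\ge W_L(t,\cdot)$ on $[0,L]$, since $u(t,L)\ge \tilde q_c(L+x_0)$ by Lemma \ref{nlower}. As $L\to\infty$, the boundary values increase to $\delta$ and the $W_L^\infty$ are monotone in $L$ by sub/supersolution arguments. Their limit $W_\infty$ solves $dW''+f(W)=0$ on $(0,\infty)$ with $W(0)=\delta_0$, $W\ge\tilde q_c(\cdot+x_0)$ and $W\le v$ by the upper bound. The phase-plane argument from Step 1 of Lemma \ref{lemma3.4} shows $W_\infty(\infty)=1$: the limit is bounded below by something exceeding $\hat\theta_f$ for large $x$, so it cannot settle at a lower equilibrium or oscillate. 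Uniqueness then gives $W_\infty=v$, so $\liminf_{t\to\infty}u\ge v-\varepsilon$ locally uniformly for every $\varepsilon$.

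The main obstacle is identifying both limits as $v$: showing that the stationary limits have value $1$ at infinity and invoking the uniqueness for \eqref{1.3}. This is most delicate when $\delta_0=0$ or $\delta_0<\hat\theta_f$. I would handle it through the positive lower barrier $\tilde q_c(x+x_0)$ and a phase-plane or energy argument on $dW''+f(W)=0$, as in Lemma \ref{lemma3.4}.
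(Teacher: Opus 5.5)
Your proposal is correct and follows essentially the same strategy as the paper. Both arguments sandwich $u$ between time-monotone solutions of auxiliary problems, let $t\to\infty$ and then $L\to\infty$, and identify both limits with $v$ through a phase-plane argument and the uniqueness for \eqref{1.3}. The differences are minor. Your upper barrier lives on the half-line and uses $h(t)\to\infty$ together with $v>\delta$ far out to handle the moving boundary, whereas the paper works on $[0,L]$ with the large boundary value $M\ge C_1$ at $x=L$. Your lower barrier is $\tilde q_c(\cdot+x_0)$ from Lemma \ref{nlower}, whereas the paper uses a stationary lower solution built from $q_0$.
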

	
	\begin{proof}	
		Suppose $ h(t) \to \infty $ as $ t \to \infty $. Then for any given $ L \gg 1 $, there exists $ T_L \gg 1 $ such that
		\[
		h(t) > L \quad \text{for } t \geq T_L.
		\]
	
Define	\[
	w(x) = 
	\begin{cases} 
		0, & \text{if } x \leq h(0), \\[4pt]
		q_0\bigl(x - h(0)\bigr), & \text{if } x \in [h(0),\, h(0) + l_0], \\[4pt]
		\delta, & \text{if } x > h(0) + l_0,
	\end{cases}
	\]
	where $(q_0, l_0)$ is the unique solution of \eqref{qc1} with $c = 0$ and $\delta_0 = 0$. 	By the definition of $q_0$, $-dw_{xx}\leq f(w)$ for $x\in \mathbb{R}$ in the weak sense. The comparison principle then yields
	\begin{equation}\label{u>w}
	u(t,x)\geq w(x) \ \mbox{ for }\ t\geq0, \ x\in[0,h(t)].
	\end{equation}

		{\bf Step 1}: We show $\liminf_{t \to \infty} u(t, x) \geq v(x) \ \text{locally uniformly for } x \in [0, \infty)$.
		
		By Lemma \ref{le2.5}, we have $ u(t, x) > 0 $ for $ t \geq 0 $ and $ x \in (0, h(t)] $. Let $ V = V_L(t, x) $ be the unique positive solution of the following auxiliary problem:
		\begin{equation}\label{3.4}
			\begin{cases}
				V_t = d V_{xx} + f(V), & t > T_L, \, 0 < x < L, \\
				V(t, 0) = \delta_0, \, V(t, L) = \delta, & t > T_L, \\
				V(T_L, x) = w(x), & 0 < x < L.
			\end{cases}
		\end{equation}
		
		Since $ w(x) $ is a lower stationary solution of \eqref{3.4} and $ \bar{V} \equiv \max\{1,\delta_0\} $ is an upper stationary solution of \eqref{3.4}, it follows that $ V_L(t, x) $ is nondecreasing in $ t $ and
		\[
	w(x) \leq V_L(t, x) \leq\bar V\quad \text{for } t > T_L \text{ and } x \in [0, L].
		\]
		Thus,
		\[
		V_L^\infty(x) := \lim_{t \to \infty} V_L(t, x) \quad \text{exists and satisfies}\quad
		w(x) \leq V_L^\infty(x) \leq \bar V \quad \text{for } x \in [0, L] 	.\]
		Moreover, by standard parabolic regularity, this limit holds in the $ C^2([0, L]) $ norm, and $ V_L^\infty(x) $ satisfies
		\[
		-d (V_L^\infty)''(x) = f(V_L^\infty), \quad V_L^\infty(0) = \delta_0, \quad V_L^\infty(L) = \delta.
		\]
		
		We next show that $ V_L^\infty(x) \to v(x) $ as $ L \to \infty $. By the comparison principle, we easily see that  $V_L(t,x)\leq V_{\tilde L}(t,x)$ when $\tilde L>L$. It follows that  $ V_L^\infty $ is nondecreasing in $L$ and hence 
		\[
		V_\infty(x) := \lim_{L \to \infty} V_L^\infty(x) \quad \text{exists for every } x \in [0, \infty).
		··		\]
		Furthermore, by standard elliptic regularity, this limit holds in $ C_{\text{loc}}^2([0, \infty)) $, and $ V_\infty(x) $ satisfies
		\[
		-d V_\infty'' = f(V_\infty), \quad V_\infty(0) = \delta_0, \quad w(x) \leq V_\infty(x) \leq \bar V=\max\{1,\delta_0\} \quad \text{for } x \in [0, \infty).
		\]
		By a simple phase plane analysis, we see that $ V_\infty(x) = v(x) $, where $ v(x) $ is the unique solution of \eqref{1.3}. Moreover, 
		\[
		\begin{cases} \mbox{$v(x)$ is strict increasing in $x$ when $\delta_0<1$},\\
			\mbox{$v(x)$ is strict decreasing in $x$ when $\delta_0>1$},\\
			\mbox{$v(x)\equiv 1$  when $\delta_0=1$}.
		\end{cases}
		\]
		Since $h(t)>L\gg1$ when $t\geq T_L$, it follows from \eqref{u>w} that $u(t,L)\geq\delta$ for such $t$, and so
		by the  comparison principle, we  obtain
		\[
		V_L(t, x) \leq u(t, x) \quad \text{for } t > T_L \text{ and } x \in [0, L].
		\]
		It follows that
		\begin{equation}\label{3.6a}
			\liminf_{t \to \infty} u(t, x) \geq V_L^\infty(x) \quad \text{for } x \in [0, L].
		\end{equation}
		Since $ V_L^\infty(x) \to v(x) $ as $ L \to \infty $ locally uniformly in $ x \in [0, \infty) $, letting $ L \to \infty $ in \eqref{3.6a} yields
		\[
		\liminf_{t \to \infty} u(t, x) \geq v(x) \quad \text{locally uniformly for } x \in [0, \infty).
		\]

		{\bf Step 3}: We show that $ \limsup_{t \to \infty} u(t, x) \leq v(x) \ \text{locally uniformly for } x \in [0, \infty)$.
		
		Let $ M := 1+ \|u_0\|_{C[0, h_0]}>\max\{w(x),\delta_0\} $, and $ W = W_L(t, x) $ be the unique solution of the following auxiliary problem:
		\begin{equation}\label{3.3}
			\begin{cases}
				W_t = d W_{xx} + f(W), & t > T_L, \, 0 < x < L, \\
				W(t, 0) = \delta_0, \, W(t, L) = M, & t > T_L, \\
				W(T_L, x) = M, & 0 < x < L.
			\end{cases}
		\end{equation}

		Since $ w(x) $ is a lower stationary solution of \eqref{3.3} and $ \bar{W} \equiv M $ is an upper stationary solution of \eqref{3.3}, we see that $w(x) \leq W_L(t,x) \leq M$, $ W_L(t, x) $ is monotone nonincreasing in $ t $ and 
		\[
		\lim_{t \to \infty} W_L(t, x) = \tilde{V}_L(x) \quad \text{uniformly in } [0, L].
		\]
		Moreover, by standard parabolic regularity, this limit holds in the $ C^2([0, L]) $ norm, and $ \tilde{V}_L(x) $ satisfies
		\[
		-d \tilde{V}_L'' = f(\tilde{V}_L), \quad \tilde{V}_L(0) = \delta_0, \quad \tilde{V}_L(L) = M.
		\]
		By the monotonicity of $ W_L $ with respect to $ L $ (easily seen by an upper and lower solution argument), we conclude that
		\[
		\tilde{V}_\infty(x) := \lim_{L \to \infty} \tilde{V}_L(x) \quad \text{exists for every } x \in [0, \infty).
		\]
		Furthermore, by standard elliptic regularity, this limit holds in $ C_{\text{loc}}^2([0, \infty)) $, and $ \tilde{V}_\infty(x) $ satisfies
		\[
		-d \tilde{V}_\infty'' = f(\tilde{V}_\infty), \quad \tilde{V}_\infty(0) = \delta_0, \quad w(x) \leq \tilde{V}_\infty \leq M \quad \text{for } x \in [0, \infty).
		\]
		By a phase plane consideration we easily see  that $ \tilde{V}_\infty(x) = v(x) $, where $ v(x) $ is the  unique solution of \eqref{1.3}.
		
		By the  comparison principle, we have $ u(t, x) \leq W(t, x) $ in $ [T_L, \infty) \times [0, L] $. It follows that
		\[
		\limsup_{t \to \infty} u(t, x) \leq \tilde{V}_L(x) \mbox{ uniformly for } x\in [0,L].
		\]
		Letting $L\to\infty$ we obtain
		\[
		\limsup_{t \to \infty} u(t, x) \leq \tilde{V}_\infty(x) = v(x) \quad \text{for all } x \geq 0.
		\]
		This completes Step 3.
		
		Combining the results in Steps 1, 2 and 3, we obtain
		\[
		\lim_{t \to \infty} u(t, x) = v(x) \quad \text{locally uniformly for } x \in [0, \infty).
		\]
		The proof is complete.
	\end{proof}
	
	\subsection{Proof of Theorem \ref{th1.3}.}\label{sec:3.5}
	The conclusions in part (i) of Theorem \ref{th1.3} have already been proved in Lemma \ref{lemma2.10}. It remains to prove part (ii), which we restate as a theorem.
	
	\begin{theorem}\label{thbwd}
		Let $0 \leq \delta_0 < \delta < 1$ and $h_0 = l_*$. Then,
		\begin{enumerate}[label=(\roman*), topsep=0pt, itemsep=0pt, leftmargin=*]
			\item[\rm (i)] if $u_0(x) \equiv w_*(x)$, then \textsl{transition} occurs;
			\item[\rm (ii)] if $u_0(x)\not\equiv w_*(x)$ and $u_0(x) \geq w_*(x)$, then \textsl{spreading} occurs;
			\item[\rm (iii)] if $u_0(x)\not\equiv w_*(x)$ and $u_0(x) \leq w_*(x)$, then \textsl{vanishing} occurs.
		\end{enumerate}
	\end{theorem}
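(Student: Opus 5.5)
The plan is to treat the three parts separately. In each nontrivial case the goal is a one-sided \emph{sign} for $h'(t)$ valid for all time. Once $h$ is monotone, its limit exists, and Lemma \ref{l8} together with Proposition \ref{prop-h} and Lemma \ref{lemma2.10} identify the outcome. Part (i) is immediate. $(w_*, l_*)$ is a stationary solution of \eqref{1.1} with $w_*\in\mathcal X(l_*)$, so by the uniqueness in Theorem \ref{th1.1} the solution is $(u,h)\equiv(w_*,l_*)$, i.e.\ transition.

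For (ii), extend $w_*$ by $\hat w(x):=w_*(x)$ for $x\in[0,l_*]$ and $\hat w(x):=\delta$ for $x>l_*$. Since $w_*'(l_*)=0$, $\hat w$ is $C^1$, and $d\hat w''+f(\hat w)\ge 0$ weakly because $f(\delta)>0$. So $\hat w$ is a stationary lower solution. Let $T:=\sup\{t<T_{\max}: h(s)\ge l_* \text{ on }[0,t]\}$. On $[0,T)$ the comparison principle on $\{0<x<h(t)\}$ applies: at $x=0$ both functions equal $\delta_0$, at $x=h(t)$ we have $u=\delta\ge\hat w$, and initially $u_0\ge w_*$. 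It gives $u\ge \hat w$ there. Since $u_0\not\equiv w_*$, the strong maximum principle gives $u>\hat w$ in the interior for $t>0$. At $x=h(t)$ we have $u=\hat w=\delta$ and $\hat w_x(h(t))=0$ (whether $h(t)=l_*$ or $h(t)>l_*$), so the Hopf lemma yields $u_x(t,h(t))<0$, i.e.\ $h'(t)>0$. The one delicate point is the start. From $u_0\ge w_*$ and $u_0(l_*)=\delta$ we get $u_0'(l_*)\le 0$, so $h'(0)\ge 0$. I expect the main care to be needed in making the continuity argument rigorous: $h$ is increasing on $(0,T)$, which forces $T=T_{\max}$. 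Then $h\ge l_*$, so $T_{\max}=\infty$ by Lemma \ref{lemma2.8}, and $h$ is increasing with $h(t)>h(1)>l_*$ for $t>1$. If $\lim h$ were finite, Lemma \ref{l8} would force the limit to be $l_*$, a contradiction. Hence $h\to\infty$, and the lemma in Section 3.3 gives spreading.

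For (iii), use a moving upper solution instead: $W(t,x):=w_*(x+l_*-h(t))$ for $0\le x\le h(t)$, which makes sense while $h(t)\le l_*$. It satisfies $W(t,h(t))=\delta$, $W_x(t,h(t))=0$, and $W(t,0)=w_*(l_*-h(t))\ge\delta_0$. Moreover
\[W_t-dW_{xx}-f(W)=-h'(t)\,w_*'(x+l_*-h(t))\ge 0\]
whenever $h'(t)\le 0$, since $w_*'\ge 0$. Also $u_0\le w_*$ and $u_0(l_*)=\delta$ give $u_0'(l_*)\ge0$, so $h'(0)\le 0$. Let $T_0$ be the first time after which $h'\le0$ fails. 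On $(0,T_0]$, comparison gives $u\le W$, and the strong maximum principle gives $u<W$ inside, using $u_0\not\equiv w_*$. The Hopf lemma at $x=h(t)$ then gives $u_x(t,h(t))>0$, i.e.\ $h'<0$ on $(0,T_0]$, which contradicts the definition of $T_0$ if $T_0<T_{\max}$. Thus $h$ is strictly decreasing on $(0,T_{\max})$. If $T_{\max}=\infty$, then $h$ has a limit $m<h(1)<l_*$. Either $m>0$, which contradicts Lemma \ref{l8}, or $m=0$, which contradicts Lemma \ref{lemma2.10}. Hence $T_{\max}<\infty$ and $h(t)\to0$, i.e.\ vanishing.
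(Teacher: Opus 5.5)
Your argument is correct when $u_0'(l_*)\neq 0$. In that case part (ii) is essentially the paper's own Step~1. In part (iii), your moving barrier $W(t,x)=w_*(x+l_*-h(t))$ is a neat shortcut: it replaces the paper's stationary supersolution built from the family $q_Q$, $Q>\delta$, with a modified nonlinearity $\tilde f$, and it makes the paper's intermediate claim $u<\delta$ unnecessary. However, there is a genuine gap in the degenerate case $u_0'(l_*)=0$, i.e.\ $h'(0)=0$, and it affects both (ii) and (iii).

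\textbf{Part (ii).} Your continuation argument only works if $T>0$, i.e.\ if $h\ge l_*$ on some $[0,\epsilon]$, and $h'(0)=0$ does not give this. The Hopf lemma is not available at $t=0$. For $t>0$ it yields $h'(t)>0$ only at times with $h(t)\ge l_*$. At times with $h(t)<l_*$, the moving boundary sits where $\hat w=w_*<\delta$, so $u-\hat w>0$ there and Hopf says nothing about $h'$. Hence the scenario $h(t)<l_*$ for all $t\in(0,\epsilon)$ is not excluded; then $T=0$ and nothing follows.

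\textbf{Part (iii).} The same problem occurs. If $h'>0$ on $(0,\epsilon)$, your $T_0$ equals $0$ and no contradiction is reached. This is not an exotic case: any $u_0=w_*\pm\varphi$ with $\varphi\ge 0$ supported in a compact subset of $(0,l_*)$ has $u_0'(l_*)=0$.

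\textbf{How the paper closes the gap.} It uses approximation. Take $u_0^n\in\mathcal X(l_*)$ with $u_0^n\to u_0$ in $C^2([0,l_*])$, $u_0^n\ge w_*$ and $(u_0^n)'(l_*)<0$ (resp.\ $u_0^n\le w_*$ and $(u_0^n)'(l_*)>0$). The nondegenerate argument gives $h_n'>0$ (resp.\ $h_n'<0$). Continuous dependence on the initial data then gives $h\ge l_*$ on $[0,T_{\max})$ (resp.\ $h'\le 0$ on some $[0,t_0]$ with $t_0>0$). Once this initial interval is secured, your own Hopf and continuation arguments give the strict signs, and the rest of your proof goes through.
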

	
	\begin{proof}
		By the uniqueness of the solution to \eqref{1.1}, it is obvious that 
		\[
		u_0(x)\equiv w_*(x)\implies u(t,x)\equiv w_*(x),\ h(t)\equiv l_*.
		\]
		 Hence (i) holds.
		
		To prove (ii) and (iii), we first consider the  case 
		\begin{equation}\label{not0}
			u_0'(l_*)\not= 0.
		\end{equation}
		The case $u_0'(l_*)=0$ will be treated afterward, by an approximation argument using a sequence of initial functions $u_0^n$ satisfying \eqref{not0} and converging to $u_0$ as $n\to\infty$.	
		
		{\bf Step 1.} We prove part (ii) when \eqref{not0} holds.
		
		The assumptions imply $ u_0'(h_0) < 0 $.  The regularity of $u(t,x)$ and $h(t)$ in Theorem \ref{th2.1} imply that $h'(t) = -\frac{d}{\delta} u_x(t, h(t))>0$ for $ t \in [0, \epsilon) $  with some small $ \epsilon > 0 $. 
		
		We are going to show that $h'(t)>0$ for all $t\in (0, T_{max})$. Otherwise there exists $ t_0 \in [\epsilon, T_{max}) $ such that $ h'(t) > 0 $ for $ t \in (0, t_0) $ and $ h'(t_0) = 0 $.
		Define $ \eta(t, x) = {u}(t, x) - \tilde{w}(x) $, where
		\begin{equation}\label{ext}
			\tilde{w}(x) =
			\begin{cases}
				w_*(x), & x \in [0, l_*], \\
				\delta, & x \in [l_*, \infty).
			\end{cases}
		\end{equation}
		Then $\tilde w$ satisfies, in the weak sense,
		\[
		-d\tilde w''\leq f(\tilde w) \mbox{ for } x\geq 0.
		\]
		Hence
		\begin{equation}\label{3.12}
			\begin{cases}
				\eta_t - d \eta_{xx} \geq f(u) - f(\tilde{w}) = c (t,x)\eta, & 0 < t \leq t_0, \, 0 < x < h(t), \\
				\eta(t, 0) \equiv 0, \, \eta(t, h(t)) \equiv 0, & 0 < t \leq t_0, \\
				\eta(0, x) \geq, \not\equiv 0, & 0 \leq x \leq l_*,
			\end{cases}
		\end{equation}
		where $c(t,x)$ is some $L^\infty$ function.
		By the strong maximum principle, $ \eta(t, x) > 0 $ for $ 0 < t \leq t_0 $ and $ 0 < x < h(t) $. Applying the Hopf lemma at $ t = t_0 $ where
		$\eta(t_0, h(t_0))=0$, we find $ \eta_x(t_0, h(t_0)) < 0 $, which implies $u_x(t_0, h(t_0))= -\frac\delta d h'(t_0) > 0 $,  a contradiction. Thus we must have $h'(t)>0$ for all $t\in (0, T_{max})$. We may now use Lemma \ref{lemma2.10} to conclude that $T_{max}=\infty$, and hence
		$ \lim_{t \to \infty} h(t) = h_\infty>l_*$.  By Theorem \ref{th1.2b}, we necessarily have $ h_\infty = \infty $, and so $ \lim_{t \to \infty} u(t, x) = v(x) $ locally uniformly in $ [0, \infty) $.
		
		{\bf Step 2.} We prove part (iii) when \eqref{not0} holds.
		
		The assumptions imply $ u_0'(h_0) > 0 $.  The regularity of $u(t,x)$ and $h(t)$ in Theorem \ref{th2.1} then imply that $h'(t) = -\frac{d}{\delta} u_x(t, h(t))<0$ for $ t \in [0, \epsilon] $  with some small $ \epsilon > 0 $. Define
		\[
		T_0:=\sup\{T\in (0, T_{max}): h'(t)<0 \mbox{ for } t\in [0, T)\}.
		\]
		Then $T_0\geq\epsilon>0$. We claim  that 
		\begin{equation}\label{h'<0}
			T_0=T_{\max}.
		\end{equation}
		Note that by Proposition \ref{prop-h}, the equality \eqref{h'<0} and $h_0=l_*$ imply vanishing must occur. So to complete Step 2, it suffices to prove \eqref{h'<0}.
		
		Now suppose, for contradiction, that  $T_0 \in [\epsilon, T_{\max})$ and so
		\begin{equation}\label{T0}
			h'(t) < 0 \quad \text{for} \ t \in [0, T_0) \ \text{and} \ h'(T_0) = 0.
		\end{equation}
		We are going to show that \eqref{T0} leads to a contradiction,  in four substeps below.
		
		{\bf Step 2.1.} We construct two auxiliary functions  $\tilde f(u)$ and $\bar u(x)$ depending on some $T_1\in (0, T_0]$, whose value will be specified later.
		
		Fix $Q\in [\delta,1)$ and define
		\begin{equation*}\label{l_Q}
			\tilde l_Q:= \int_0^{Q} \frac{dx}{\sqrt{\frac{2}{d}\int_{x}^{\delta} f(s)}ds}  \in (0, \infty).
		\end{equation*}
		As in the proof of Lemma \ref{lemma3.4}, the identity
		\begin{equation*}
			\int_{w_Q(x)}^Q \frac{du}{\sqrt{\frac{2}{d}\int_{u}^{\delta} f(s) \, ds}}du =\tilde l_Q-x \mbox{ for } x\in [0, \tilde l_Q],
		\end{equation*}
		uniquely defines a function $w_Q(x)$, and $(l, w(x))=(\tilde l_Q, w_Q)$ satisfies
		\begin{equation*}\label{lQ}
			\begin{cases}
				d w'' + f(w) = 0 \mbox{ for } x \in (0, l), \\
				w'(x) > 0 \mbox{ for }  x \in [0, l),\\
				w(0) = {0}, \quad w(l) = Q, \quad w'(l) = 0.
			\end{cases}
		\end{equation*}
		As $0\leq\delta_0<\delta\leq Q$, there exist $0\leq \tilde l_Q(\delta_0)<\tilde l_Q(\delta)\leq \tilde l_Q$ such that
		\[
		w_Q(\tilde l_Q(\delta_0))=\delta_0,\ w_Q(\tilde l_Q(\delta))=\delta.
		\]
		Define
		\[
		l_Q:=\tilde l_Q-\tilde l_Q(\delta_0),\ l_Q^\delta:=\tilde l_Q(\delta)-\tilde l_Q(\delta_0),\ q_Q(x):=w_Q(x+\tilde l_Q(\delta_0)).\]
		Then it is easily seen that $(l_Q, q_Q(x))$ satisfies	
		\begin{equation}\label{q_Q}
			\left\{
			\begin{array}{ll}
				d  q_Q'' + f(q_Q) = 0, & x \in (0, l_Q), \\[2mm]
				q_Q(0) = \delta_0, \quad q_Q(l^\delta_Q) = \delta, \quad q_Q(l_Q) = Q, \quad q_Q'(l_Q) = 0, \\[2mm]
				q_Q'(x) > 0, & x \in [0, l_Q).
			\end{array}
			\right.
		\end{equation}
		Since $l^\delta_Q$ depends continuously on $Q$ and $\delta$, and $l_* = l^\delta_\delta =h(0)> h(t)$ for $t\in (0, T_0]$, for any given $T_1\in (0, T_0]$ we can select $Q=Q(T_1) \in (\delta, 1)$ such that
		\[
		l^\delta_Q > h(T_1). \]
		
		With $Q\in (\delta, 1)$ chosen as above (depending on $T_1$), we can now find a function  \( \tilde{f} \in C^1([0, Q]) \) such that \( \tilde{f}(0) = \tilde{f}(Q) = 0 \) and
		\begin{align*}
			\tilde{f}(s) = f(s)  \quad \text{for } s \in \left[0, \delta \right], \qquad 	0 < \tilde{f}(s) \leq f(s)  \quad \text{for } s \in (0, Q).
		\end{align*}
		
		Next we define $\bar u(x)$ for $x \in [0, \infty)$ by
		\begin{equation*}
			\bar{u}(x) := 
			\begin{cases}
				q_Q(x+l_Q^\delta-h(T_1)), & x \in [0, h(T_1)-l^\delta_Q+l_Q],\\
				Q, & x \in [h(T_1)-l^\delta_Q+l_Q, +\infty),
			\end{cases}
		\end{equation*}		
		Then by \eqref{q_Q} and the above definition, it is easily seen that $\bar u(x)$ satisfies, in the weak sense,
		\begin{equation}\label{bar-u}
			d\bar u_{xx}+ \tilde f(\bar u) \leq 0 \mbox{ for } x>0.
		\end{equation}
		Moreover,	since $l_Q^\delta>h(T_1)$ and $h(t)\geq h(T_1)	$ for $0\leq t\leq T_1$, we further have
		\begin{align}\label{bdy}
			\begin{cases}
				\bar{u}(0)=q_Q(l_Q^\delta-h(T_1))>q_Q(0)=\delta_0=u(t,0),\\
				\bar{u}(h(t))\geq \bar{u}(h(T_1))=q_Q(l_Q^\delta)=\delta=u(t, h(t)).
			\end{cases}
		\end{align}

		{\bf Step 2.2.} 	
		We prove that 
		\begin{equation}\label{t=0}
			u_0(x)\leq \bar u(x) \mbox{ for } x\in [0, l_*]=[0, h(0)].
		\end{equation}
		
		As we already know by assumption that $u_0(x)\leq w_*(x)=q_\delta(x)$ for $x\in [0, l_*]=[0, h(0)]$, to prove \eqref{t=0}, it suffices to show
		\begin{equation}\label{t=0a}
			q_\delta(x)\leq \bar u(x) \mbox{ for } x\in [0, l_*].
		\end{equation}
		Define $p_Q(x):=q'_Q(x)$.
		Since \( q_Q'(x) > 0 \) for all \( x \in [0, l_Q^\delta] \), the trajectory $(q_Q(x), p_Q(x))$ with $x\in [0, l_Q^\delta]$ in the $q$-$p$ plane can be represented as a function \( p=P_Q(q) \) with \( q \in [\delta_0, \delta] \), and
		\begin{equation}\label{P_Q}
			P_Q'(q) =  - \frac{f(q)}{d P_Q(q)} \mbox{ for } q\in [\delta_0,\delta],
			\quad  P_Q(\delta) =p_Q(l_Q^\delta)> 0.
		\end{equation}
		Analogously, with $p_\delta(x):=q'_\delta(x)$,
		the trajectory $(q_\delta(x), p_\delta(x))$ with $x\in [0, l_*)$ in the $q$-$p$ plane can be represented as a function \( p=P_\delta(q) \) with \( q \in [\delta_0, \delta) \), and
		\begin{equation}\label{P_delta}
			P_\delta'(q) =  - \frac{f(q)}{d P_\delta(q)} \mbox{ for } q\in [\delta_0,\delta),
			\quad  \lim_{q\to \delta}P_\delta(q) = 0.
		\end{equation}
		Hence $P_Q(q)>P_\delta(q)$ for $q<\delta$ but close to $\delta$. By uniqueness of the solution to the initial value problem of ODEs, we see that the solution
		curve $p= P_Q(q)$ of \eqref{P_Q} stays above that of \eqref{P_delta} when $q$ is decreased from $\delta$ to $\delta_0$, i.e.,
		\[
		P_Q(q)>P_\delta(q) \mbox{ for } q\in [\delta_0, \delta).
		\]
		From $q_Q'(x)=p_Q(x)=P_Q(q_Q(x))$ for $x\in [0, l_Q^\delta]$ we deduce
		\[
		l^\delta_Q=\int_0^{l^\delta_Q}\frac {q_Q'(x)}{P_Q(q_Q(x))	}dx=\int_{\delta_0}^\delta\frac{dq}{P_Q(q)}.
		\]
		Since
		\[
		\int_{\delta_0}^\delta\frac{dq}{P_Q(q)}<\int_{\delta_0}^\delta\frac{dq}{P_\delta(q)}=l_*,
		\]
		we obtain $l^\delta_Q<l_*$.
		
		Since $q_Q(0)=q_\delta(0)=\delta_0$ and $q_Q'(0)=P_Q(\delta_0)>P_\delta(\delta_0)=q'_\delta(0)$, there exists $\beta > 0$ small such that  
		\[q_\delta(x) < q_Q(x) \quad \text{for} \quad x \in (0, \beta).\]  
		We claim that the above inequality holds for $x\in (0, l_Q^\delta)$. Otherwise
		there exists some $x_0 \in (0, l_Q^\delta]$ satisfying  
		\[q_\delta(x) < q_Q(x) \quad \text{for} \quad x \in (0, x_0) \quad \text{and} \quad q_\delta(x_0) = q_Q(x_0).\]  
		It follows that $q_Q'(x_0)\leq q'_\delta(x_0)$. On the other hand, we have
		\[
		q_Q'(x_0)=P_Q(q_Q(x_0))>P_\delta(q_Q(x_0))=P_\delta(q_\delta(x_0))=q_\delta'(x_0).
		\]
		This contradiction proves our claim that 
		\[q_\delta(x) < q_Q(x) \quad \text{for} \quad x \in (0, l_Q^\delta).\] 
		Since $l_Q^\delta>h(T_1)$, from the definition of $\bar u(x)$ we see that $\bar u(x)\geq q_Q(x)$ for $x\in [0, l_Q^\delta)$ and
		hence
		\[
		\bar u(x)\geq q_Q(x)>q_\delta(x) \ \text{for} \ x \in (0, l_Q^\delta).\]
		For $ x\in [l_Q^\delta, l_*]$, $\bar u(x)=Q>\delta\geq q_\delta(x)$. Thus \eqref{t=0a} holds, and Step 2.2 is completed.

		{\bf Step 2.3.}
		We show that
		\begin{equation}\label{T00}
			u(t,x) < \delta \quad \text{for} \ t \in [0, T_0] \ \text{and} \ x \in [0, h(t)).
		\end{equation}

		Since $u_0'(l_*)>0=w_*'(l_*)$, $u_0(x) \leq w_*(x)<\delta$ for $x\in [0, l_*)$, and $u_0(l_*)=w_*(l_*)=\delta$,  by	
		continuity, there exists $\epsilon_0 > 0$ such that 
		\begin{align}\label{3.66a}
			u(t,x) < \delta \quad \text{for} \ t \in [0, \epsilon_0] \ \text{and} \ x \in [0, h(t)).
		\end{align}
		
		Now assume, contrary to \eqref{T00}, that there exists $T_1 \in (\epsilon_0, T_0]$ with
		\[
		u(t,x) < \delta \quad \text{for} \ t \in [0, T_1) \ \text{and} \ x \in [0, h(t)),
		\]
		yet $u(T_1, x_0) = \delta$ for some $x_0 \in (0, h(T_1))$.
		
		We are going to show that this statement leads to a contradiction. 
		Since \( 0 < u(t, x) \leq \delta \) holds for \( t \in [0, T_1] \) and \( x \in [0, h(t)] \), it follows that
		\begin{equation}\label{u-tilde-f}
			u_t = d u_{xx} + \tilde{f}(u)  \ \mbox{ for } t\in (0, T_1],\ x \in (0, h(t)).
		\end{equation}
		
		We now consider $\bar u(x)$ defined in Step 2.1 with the above $T_1$, which satisfies \eqref{bar-u}, \eqref{bdy} and \eqref{t=0}.				
		Due to \eqref{u-tilde-f},   we can compare $u(t,x)$ with $\bar u(x)$ for $t\in [0, T_1]$ and $x\in [0, h(t)]$ to conclude, by the comparison principle, that
		\[
		\bar{u}(x)> u(t, x)\quad\text{for } t\in[0, T_1]\quad\text{and}\quad x\in[0, h(t)).
		\]		
		In particular,  
		\[\delta=\bar{u}(h(T_1))>\bar{u}(x_0)\geq u(x_0)=\delta.
		\]
		This contradiction shows that \eqref{T00} holds, and Step 2.3 is completed.
		
		\textbf{Step 2.4.} We derive the desired contradiction.
		
		Since \eqref{T00} holds, we may take $T_1=T_0$ in the definitions of $\bar u(x)$ and $\tilde f$, and
		repeat the argument in Step 2.3 above with $T_1$ replaced by $T_0$, to compare $u(t,x)$ with $\bar u(x)$ for $t\in [0, T_0]$ and $x\in [0, h(t)]$, which gives 
		\[\bar{u}(x) > u(t,x) \quad \text{for} \ t \in [0, T_0] \ \text{and} \ x \in [0, h(t)).\]   
		Since $\bar{u}(h(T_0)) = u(T_0,h(T_0)) = \delta$, the Hopf boundary lemma gives  
		\[0 < q_Q'(l_Q^\delta) = \bar{u}_x(h(T_0)) < u_x(T_0,h(T_0)).\]  
		Consequently,  
		\[h'(T_0) = -\frac{d}{\delta} u_x(T_0,h(T_0)) < 0,\]  
		contradicting $h'(T_0)=0$. This completes Step 2.

		\textbf{Step 3.} We prove (ii) and (iii) for the case $u_0'(l_*)=0$.
		
		Since $h'(0)=-\frac{d}{\delta}u_0'(l_*)=0$, there exists $t_0>0$ such that $h(t)>0$ for $t\in[0, t_0]$, which implies $T_{\text{max}}>t_0$.  
		For any sequence $\{u_0^n\}\subset \mathcal X(l_*)$ satisfying  
		\[
		\lim_{n \to \infty}u_0^n(x)=u_0(x) \quad\text{in } C^2([0,l_*]),
		\]  
		let $(u_n(t,x), h_n(t))$ be the unique solution of \eqref{1.1} with initial function $u_0^n(x)$, and maximal existence time $ T_{max}^n$. 
		By the continuous dependence of the solution of \eqref{1.1} on its initial data (which follows easily from  the uniqueness of the solution), we know that   $T^n_{\text{max}}\to T_{max}$ as $n\to\infty$ and for any $T\in (0, T_{max})$,
		\[\begin{cases}
			\lim_{n \to \infty}h_n(t)=h(t) \quad\text{in } C^1([0, T]),\\
			\lim_{n \to \infty} u_n(t,x h_n(t))=u(t,x h(t)) \quad\text{ in } C([0, T]\times [0, 1]).
		\end{cases}
		\]

		\textbf{Case 1:}
		$u_0(x) \geq, \not\equiv w_*(x)$ and $u_0'(l_*) = 0$. 
		
		Choose a sequence $\{u_0^n\}\subset \mathcal{X}(l_*)$ such that 
		\[
		u_0^n(x)\geq  w_*(x),\quad (u_0^n)'(l_*) < 0, \quad  \lim_{n \to \infty} u_0^n = u_0 \ \text{in } C^2([0,l_*]).
		\]
		By Step 1 we know that for each $n\geq 1$,
		$T^n_{\mathrm{max}} =\infty$, $h_n'(t)>0$ for $t>0$ and $h_n(t)\to\infty$ as $t\to\infty$.
		It follows that for any $t\in (0, T_{max})$, $h(t) = \lim_{n \to \infty} h_n(t)\geq l_*$ and $h'(t)=\lim_{n \to \infty} h'_n(t)\geq 0$.
		Therefore either (i) $h(t)>l_*$ for all $t\in (0, T_{max})$, or (ii) $h(t)\equiv l_*$ for $t\in [0,t_1]$ with some $t_1\in (0, T_{max})$.
		
		If alternative (ii) happens, 
		then $\tilde{u} := u - w$ satisfies, for some $L^\infty$ function $c(t,x)$,
		\begin{equation*}
			\begin{cases}
				\tilde{u}_{t} - d \tilde{u}_{xx} - c(t,x)\tilde{u} = 0, & 0 < t \leq t_1,\ 0 < x <  l_*, \\
				\tilde{u}(t, 0) = \tilde{u}(t, l_*) = 0, & 0 < t \leq t_1, \\
				\tilde{u}(0, x) \geq, \not\equiv 0, & 0 \leq x \leq l_*.
			\end{cases}
		\end{equation*}
		The strong maximum principle and the Hopf boundary lemma then yield
		\[\begin{cases}
			\tilde u(t, x)>0 &\mbox{ for } t\in (0, t_1],\ x\in (0, l_*),\\
			\tilde u_x(t, l_*)<0 &\mbox{ for } t\in (0, t_1].
		\end{cases}
		\]
		It follows that
		\[
		u_x(t,h(t))=u_x(t, l_*)=\tilde u_x(t, l_*)-w'(l_*)=\tilde u_x(t, l_*)<0 \mbox{ for } t\in (0, t_1].
		\]
		The free boundary condition in \eqref{1.1} then gives $h'(t)=-\frac d\delta u_x(t, h(t))>0$ for $t\in (0, t_1]$, a contradiction to $h(t)\equiv l_*$.
		
		Therefore only (i) can happen, which implies, by Proposition \ref{prop-h}, that $T_{max}=\infty$. We may now use $h'(t)\geq 0$ to see that $\lim_{t\to\infty} h(t)>l_*$, and therefore spreading happens.
		
		\textbf{Case 2:}
		$u_0(x) \leq, \not\equiv w_*(x)$ and $u_0'(l_*) = 0$. 
		
		Choose $\{u_0^n \}\subset \mathcal{X}(l_*)$ such that 
		\[
		u_0^n(x)\leq  w_*(x), \quad (u_0^n)'(l_*) > 0, \quad  \lim_{n \to \infty} u_0^n = u_0 \ \text{in } C^2([0,l_*]) .
		\]
		Fix $t_0\in (0, T_{max})$. Then
		$T^n_{\mathrm{max}} > t_0$ for all large $n$, and by Step 2, 
		\[
		\mbox{$h_n'(t)<0,\ u_n(t,x)<\delta$ for $t\in [0, T^n_{max}),\ x\in [0, h_n(t)).  $}
		\]
		Since we further have		
		\[
		\lim_{n \to \infty} h_n = h \ \text{in } C^1([0, t_0]),\ \mbox{ and } \lim_{n \to \infty} u_n(t, x h_n(t)) = u(t, x h(t)) \ \text{uniformly in } [0, t_0] \times [0, 1],
		\]
		it follows that
		\[
		h'(t) \leq 0 \quad \text{and} \quad u(t,x) \leq \delta \ \text{for } t \in [0, t_0] \text{ and } x \in [0,h(t)].
		\]
		
		We next show that 
		\begin{align}\label{3.73}
			h'(t)<0,\qquad u(t,x)<\delta \mbox{ for } t\in (0,t_0],\ x\in [0,h(t)).
		\end{align}
		In fact, for any given  $T_1 \in  (0,t_0]$, as in Step 2.3 above, we can compare $u(t,x)$ with  the function
		\[
		\bar{u}(x) =
		\begin{cases}
			q_Q(x + l_Q^\delta - h(T_1)), & x \in [0,\, h(T_1) - l_Q^\delta + l_Q],\\[2mm]
			Q, & x \in [h(T_1) - l_Q^\delta + l_Q,\, +\infty),
		\end{cases}
		\]
		for $t\in [0, T_1]$ and $x\in [0, h(t)]$, to deduce 
		\[
		h'(T_1)<0,\qquad u(T_1,x)<\delta \mbox{ for }  x\in [0,h(T_1)).
		\]
		Since $T_1\in (0, t_0]$ is arbitrary, this implies \eqref{3.73}. 
		
		Next, we show that $h'(t)<0$ for all $t\in[0,T_{\max})$. Arguing indirectly,  we assume  by contradiction
		that there exists $T_0 \in [0,T_{\max})$ such that  
		\[
		h'(t)<0 \quad \text{for } t\in[0,T_0), 
		\qquad h'(T_0)=0.
		\]
		We may then repeat the above reasoning with $T_1$ replaced by $T_0$ to deduce
		\[
		h'(T_0)<0,\qquad u(T_0,x)<\delta \mbox{ for }  x\in [0,h(T_0)).
		\]
		This contradiction proves 
		$h'(t)<0$ for all $t\in[0,T_{\max})$, which implies, by Theorem \ref{th1.2b}, that vanishing  happens. This completes Step 3, and the theorem is now fully proved.
	\end{proof}
	
	\section{Proofs of several basic results}
	
	\subsection{Proof of Theorem \ref{th2.1}} We complete the proof in four steps. 
		In Step 1, we straighten the free boundary, transforming the problem into an initial-boundary value problem with fixed boundaries. In Steps 2 and 3, the local existence and uniqueness of the solution are established via a contraction mapping argument combined with an extension technique. Finally, in Step 4, we employ Schauder theory to obtain additional smoothness of the solution away from $t=0$.

		\noindent	$\textbf{Step\,1}.$ We straighten the free boundary $x=h(t)$.

		Consider the transformation $(t, y) \mapsto (t, x)$ defined by
		\[
		x = \Psi(t, y) := y h(t), \quad y \in \mathbb{R}.
		\]
		For each fixed $t \geq 0$, this transformation constitutes a diffeomorphism from $\mathbb{R}$ to $\mathbb{R}$ provided $|h(t) - h_{0}| < \frac{h_{0}}{2}$. This condition holds for sufficiently small $t > 0$, specifically $t \in [0, T]$ with $T>0$ small,  provided that $h(t)$ is continuous. Furthermore, the inverse transformation maps the curve $x = h(t)$ to the line $y = h_{0}$.
		
		Define $U(t, y) := u(t, x)$ for $(t, x) \in \Omega_{T}$. Then for $t \in (0, T]$, system \eqref{1.1} is equivalent to
		the following initial-boundary value problem with fixed boundaries:
		\begin{equation}\label{2.1}
			\left\{
			\begin{array}{ll}
				U_{t} - \frac d{h^2(t)} U_{yy} - \frac{h'(t)}{h(t)} y U_{y} = f(U), & 0 < t \leq T, \, 0 < y < 1, \\
				U(t, 0) = \delta_{0}, \quad U(t, 1) = \delta, & 0 < t \leq T, \\
				h'(t) = -\frac{d}{\delta h(t)} U_{y}(t, 1), & 0 < t \leq T, \\
				U(0, y) = u_{0}\left(h_{0} y\right), & 0 < y < 1.
			\end{array}
			\right.
		\end{equation}
		
			\medskip
		
		\noindent	$\textbf{Step\,2}.$ An extension trick for the application of  $L^{p}$	theory and Sobolev embedding theorems.  
	\smallskip	
		
Set
		\[
		H := \max \{1, |h^{0}|\}, \quad 
		K := \max \left\{1, \| u_{0} \|_{C^{2}[0, h_{0}]} \right\}, \quad 
		T_{1} := \frac{h_{0}}{2(H + |h^{0}|)} 
		\mbox{
		with $h^{0} := -\frac{d}{\delta} \frac{u_{0}'(h_{0}) }{ h_{0}}$. }
		\]
		For $T \in (0, T_{1}]$ and the domain $D_T := [0, T] \times [0, 1]$, define the function spaces
		\[
		\begin{aligned}
			X_{1,T} &:= \left\{ U \in C(D_T) : U(0, y) = u_0(h_0 y),\ \| U - U(0, \cdot) \|_{C(D_T)} \leq K \right\}, \\
			X_{2,T} &:= \left\{ h \in C^{0,1}([0,T]) :  h(0) = h_0,\  h'(0) = h^{0}, \ \| h' - h^{0} \|_{L^{\infty}([0,T])} \leq H  \right\}.
		\end{aligned}
		\]
		The product space $X_T := X_{1,T} \times X_{2,T}$ forms a complete metric space under the metric
		\[
		d\left( (U_1, h_1), (U_2, h_2) \right) := \| U_1 - U_2 \|_{C(D_T)} + \| h'_1 - h'_2 \|_{L^{\infty}([0,T])}.
		\]
		For $0 < T < T_1$, define the subspace $X_{T_1}^T := X_{1,T_1}^T \times X_{2,T_1}^T \subset X_{T_1}$ by trivial extension:
		\[
		\begin{aligned}
			X_{1,T_1}^T &:= \left\{ U \in X_{1,T_1} : U(t,y) = U(T,y) \quad\text{for all}\quad t \in [T, T_1], \, y \in [0,1] \right\}, \\
			X_{2,T_1}^T &:= \left\{ h \in X_{2,T_1} : h(t) = h(T) \quad\text{for all}\quad t \in [T, T_1] \right\}.
		\end{aligned}
		\]
		Clearly any $(U, h) \in X_T$ admits a trivial extension to $X_{T_1}^T$ as above. For convenience, we will always identify $X_T$ with $X_{T_1}^T$. Moreover, we always assume that $f(u)=0$ for $u<0$.

		For each pair $(U, h) \in X_T = X_{T_1}^T \subseteq X_{T_1}$, 
		define
		\[
		\rho(t): = d/ h^{2}(t),\ \xi(t) =: h^{\prime}(t) / h(t), 
		\]
		and consider the initial-boundary value problem 
		\begin{equation}\label{2.2}
			\left\{
			\begin{array}{ll}
				\bar{U}_{t} - \rho(t) \bar{U}_{yy} - \xi(t) y \bar{U}_{y} = f(U), & 0 < t \leq T_1, \, 0 < y < 1, \\
				\bar{U}(t,0) = \delta_0, \, \bar{U}(t,1) = \delta, & 0 < t \leq T_1, \\
				\bar{U}(0,y) = u_0(h_0 y), & 0 < y < 1.
			\end{array}
			\right.
		\end{equation}
		Direct calculations yield 
		\begin{equation}\label{2.3}
			4d/(9h_0^2) \leq \rho(t) = d / h^{2}(t) \leq 4d/h_0^2.
		\end{equation}
		For $Y_1 = (t_1, y_1)$ and $Y_2 = (t_2, y_2)$ in $D_{T_1}$ with parabolic distance $\delta(Y_1, Y_2) = \sqrt{(y_1-y_2)^2 + |t_1 - t_2|}$, we have
		\begin{equation}\label{2.4}
			\begin{aligned}
				\omega(R) &:= d \sup_{\delta(Y_1,Y_2) \leq R} \left| \frac{1}{h^2(t_1)} - \frac{1}{h^2(t_2)} \right| \\
				&\leq d \sup_{\delta(Y_1,Y_2) \leq R} \frac{|h^2(t_2) - h^2(t_1)|}{h^2(t_1)h^2(t_2)} \\
				&\leq \frac{48d(|h^0| + H)}{h_0^3} R^2 \to 0 \quad \text{as } R \to 0.
			\end{aligned}
		\end{equation}
		Additionally, 
		\begin{equation}\label{2.5}
			|\xi(t) y| \leq |h'(t) / h(t)| \leq 2(|h^0| + H)/h_0.
		\end{equation}
		In light of \eqref{2.3}-\eqref{2.5} and the assumption $u_0 \in C^2([0, h_0])$ with $u_0(0)=\delta_0$ and $u_0(h_0)=\delta$, we can apply the $L^p$ theory to \eqref{2.2} and the Sobolev embedding theorem  to conclude that for $\alpha \in (0,1)$, \eqref{2.2} admits a unique solution $\bar{U}$ satisfying
		\begin{equation}\label{2.6}
			\| \bar{U} \|_{C^{\frac{1+\alpha}{2}, 1+\alpha}(D_{T_1})} \leq C_{T_1} \| \bar{U} \|_{W_p^{2,1}(D_{T_1})} \leq K_1,
		\end{equation}
		where $p > 3/(2-\alpha)$, and $K_1$ depends on $p$, $\| f(U) \|_{L^p(D_{T_1})}$, $\| u_0 \|_{C^2([0, h_0])}$, $h_0$, $h^0$, and $C_{T_1}$. The constant $C_{T_1}$ depends on $D_{T_1}$ and $\alpha$.\footnote{For the analysis in Step 3, it is important  that $D_{T_1}$ is independent of $T$, which is a consequence of the extension trick.}
		
		Define $\bar{h}(t)$ via 
		\begin{equation*}
			\bar{h}'(t) = -\frac{d}{\delta} \frac{\bar{U}_y(t, 1)}{h(t)}, \quad \bar{h}(0) = h_0.
		\end{equation*}
		Then $\bar{h}' \in C^{\alpha/2}([0, T_1])$ with
		\begin{equation}\label{2.7}
			\| \bar{h}' \|_{C^{\alpha/2}([0, T_1])} \leq K_2,
		\end{equation}
		where $K_2$ depends on $K_1$.
		Define the mapping $ \mathcal{F}: X_{T}=X_{T_{1}}^{T} \rightarrow C\left(D_{T_{1}}\right) \times C([0, T_{1}])$  by
		\[\mathcal{F}(U, h)=(\bar{U}, \bar{h}).\]
		Set
		\[\tilde{\mathcal{F}}(U, h):=\left.\mathcal{F}(U, h)\right|_{X_{T}}.\]
		Then we see that  $(U, h)$  is a fixed point of $ \tilde{\mathcal{F}}$  if and only if it solves \eqref{2.1}, which is equivalent to \eqref{1.1} for  $t \in[0, T]$.
		
		\medskip
		
		\noindent	$\textbf{Step\,3}.$ We show that $ \tilde{\mathcal{F}} $ is a contraction mapping for small enough  $T>0$.
		
		Given any fixed $ 0<T<\min \left\{T_{1},\,{(\frac{K_{1}}{K})}^{\frac{-2}{1+\alpha}},\, (\frac{K_{2}}{H})^{\frac{-2}{\alpha}}\right\}$, we have
		\[\begin{array}{l}
			\left\Arrowvert\bar{U}-u_{0}\right\Arrowvert_{C\left(D_{T}\right)} \leq T^{\frac{1+\alpha}{2}}\Arrowvert\bar{U}\Arrowvert_{C^{0, \frac{1+\alpha}{2}}\left(D_{T}\right)} \leq T^{\frac{1+\alpha}{2}}\Arrowvert\bar{U}\Arrowvert_{C^{0, \frac{1+\alpha}{2}}\left(D_{T_{1}}\right)} \leq K_{1} T^{\frac{1+\alpha}{2}} \leq K, \\
			\left\Arrowvert\bar{h}^{\prime}(t)-h^{0}\right\Arrowvert_{L^{\infty}([0, T])} \leq T^{\frac{\alpha}{2}}\left\Arrowvert\bar{h}^{\prime}\right\Arrowvert_{C^{\frac{\alpha}{2}}([0, T])} \leq T^{\frac{\alpha}{2}}\left\Arrowvert\bar{h}^{\prime}\right\Arrowvert_{C^{\frac{\alpha}{2}}\left(\left[0, T_{1}\right]\right)} \leq K_{2} T^{\frac{\alpha}{2}} \leq H, \\
		\end{array}\]
		which implies that  $\tilde{\mathcal{F}} $ maps $ X_{T} $ to itself.
		
		Next, we prove that $\tilde{\mathcal{F}}$ is a contraction mapping on $X_T$ for all sufficiently small $T > 0$. Let $(U_i, h_i) \in X_T = X_{T_1}^T$ for $i = 1, 2$, and define
		\[
		W := \bar{U}_1 - \bar{U}_2.
		\]
		Then, it is easy to verify that $W$ satisfies
		\begin{equation}\label{2.8}
			\left\{
			\begin{array}{ll}
				W_t - \rho_1(t) W_{yy} - \xi_1(t) y W_y = \Psi, & 0 < y < 1, \, 0 < t \leq T_1, \\
				W(t, 1) = W(t, 0) = 0, & 0 < t \leq T_1, \\
				W(0, y) = 0, & 0 \leq y \leq 1,
			\end{array}
			\right.
		\end{equation}
		where
		\[\begin{cases}\rho_i(t): = d /h_i^{2}(t), \ \xi_i(t) := h_i'(t) / h_i(t) \mbox{ for } i = 1, 2,\\[2mm]
			\Psi :=   \left[ \xi_1(t)  - \xi_2(t)  \right]y \bar{U}_{2,y} 
			+ \left( \rho_1(t) - \rho_2(t) \right) \bar{U}_{2,yy} + f(U_1) - f(U_2).
		\end{cases}
		\]

		Direct calculation gives
		\begin{equation}\label{2.9}
			\begin{aligned}
				\left\Vert [\xi_{1}(t) - \xi_{2}(t)]y \right\Vert_{C(D_{T_1})} 
				&= \sup_{(t, y) \in D_{T_1}} \left| \frac{h_1'(t)}{h_1(t)} - \frac{h_2'(t)}{h_2(t)} \right| \\
				&\leq S_1 \left\Vert h_1 - h_2 \right\Vert_{C^{0,1}([0, T])}
			\end{aligned}
		\end{equation}
		for some constant $S_1$ depending on $h_0$, $ h^0$ and $T_1$, but is independent of $T$.  Similarly,
		\begin{equation}\label{2.10}
			\begin{aligned}
				\left\Vert \rho_1(t) - \rho_2(t) \right\Vert_{C(D_{T_1})} 
				&= \sup_{(t, y) \in D_{T_1}} \left| \frac{d}{h_1^2(t)} - \frac{d}{h_2^2(t)} \right| \\
				&\leq S_2 \left\Vert h_1 - h_2 \right\Vert_{C^{0,1}([0, T])}
			\end{aligned}
		\end{equation}
		where the positive constant $S_2$ depends on $h_0$,  $d$ and $T_1$, but is independent of $T$.
		Combining \eqref{2.9} and \eqref{2.10}, for any $p > 1$ we obtain
		\begin{equation}\label{2.11}
			\begin{aligned}
				\left\Vert \Psi \right\Vert_{L^p(D_{T_1})} 
				&\leq \left\Vert \xi_1 y - \xi_2 y \right\Vert_{L^\infty(D_{T_1})} \left\Vert \bar{U}_{2,y} \right\Vert_{L^p(D_{T_1})} \\
				&\quad + \left\Vert \rho_1 - \rho_2 \right\Vert_{L^\infty(D_{T_1})} \left\Vert \bar{U}_{2,yy} \right\Vert_{L^p(D_{T_1})} \\
				&\quad + \left\Vert f(U_1) - f(U_2) \right\Vert_{L^p(D_{T_1})} \\
				&\leq S_3 \left( \left\Vert U_1 - U_2 \right\Vert_{C(D_{T_1})} + \left\Vert h_1 - h_2 \right\Vert_{C^{0,1}([0, T_1])} \right),
			\end{aligned}
		\end{equation}
		where $S_3$ depends only on $D_{T_1}$, $K_1$, $f$, $S_1$, and $S_2$.
		
		Applying $L^p$ estimates to \eqref{2.8} and the Sobolev embedding theorem we then obtain
		\begin{equation}\label{2.12}
			\begin{aligned}
				\left\Vert W \right\Vert_{C^{\frac{1+\alpha}{2},1+\alpha}(D_{T_1})} 
				&\leq C_{T_1} \left\Vert W \right\Vert_{W_p^{1,2}(D_{T_1})}\leq K_3 \left\Vert \Psi \right\Vert_{L^p(D_{T_1})} \\
				&\leq K_3 \left( \left\Vert U_1 - U_2 \right\Vert_{C(D_{T_1})} + \left\Vert h_1 - h_2 \right\Vert_{C^{0,1}([0,T_1])} \right),
			\end{aligned}
		\end{equation}
		with $K_3$ depending on $p$, $D_{T_1}$, $S_3$ and $C_{T_1}$. Since
		\[
		\bar h_1'(t)-\bar h_2'(t)=\frac d\delta\left[\frac{\bar U_{1,y}(t,1)}{h_1(t)}-\frac {\bar U_{2,y}(t,1)}{h_2(t)}	\right],
			\]
			it is easy to show that
		\begin{equation}\label{2.13}
			\left\Vert \bar{h}_1' - \bar{h}_2' \right\Vert_{C^{\alpha/2}([0,T_1])} \leq \tilde C_{T_1}\left( \left\Vert \bar{U}_{1,y} - \bar{U}_{2,y}\right\Vert_{C^{0,\alpha/2}(D_{T_1})}+\left\Vert h_1 - h_2 \right\Vert_{C^{0,1}([0,T_1])} \right)
		\end{equation}
for some constant $\tilde C_{T_1}$ independent of $T$.		
		The inequalities \eqref{2.12} and \eqref{2.13} imply
		\begin{align*}
			\left\Vert \bar{U}_1 - \bar{U}_2 \right\Vert_{C^{\frac{1+\alpha}{2},1+\alpha}(D_{T_1})} 
			+ \left\Vert \bar{h}_1' - \bar{h}_2' \right\Vert_{C^{\alpha/2}([0,T_1])} 
			\leq K_4 \left( \left\Vert U_1 - U_2 \right\Vert_{C(D_{T_1})} 
			+ \left\Vert h_1' - h_2' \right\Vert_{L^\infty([0,T_1])} \right),
		\end{align*}
		where $K_4$ depends on $K_3$ and $\tilde C_{T_1}$ but is independent of $T$.
		If we 	choose 
		\[
		T := \min \left\{ \frac{1}{2},\, T_1,\, (\frac{K_1}{K})^{-\frac{2}{1+\alpha}},\, (\frac{K_2}{H})^{-\frac{2}{\alpha}},\, K_4^{-\frac{2}{\alpha}} \right\}, 
		\]
		then
		\begin{align*}
			\left\Vert \bar{U}_1 - \bar{U}_2 \right\Vert_{C(D_T)} 
			+ \left\Vert \bar{h}_1' - \bar{h}_2' \right\Vert_{C([0,T])} 
			&  \leq T^{\frac{1+\alpha}{2}} \left\Vert \bar{U}_1 - \bar{U}_2 \right\Vert_{C^{\frac{1+\alpha}{2},1+\alpha}(D_{T_1})} 
			+ T^{\frac{\alpha}{2}} \left\Vert \bar{h}_1' - \bar{h}_2' \right\Vert_{C^{\alpha/2}([0,T_1])}, \\
			& \leq \frac{1}{2} \left( \left\Vert U_1 - U_2 \right\Vert_{C(D_{T_1})} 
			+ \left\Vert h_1' - h_2' \right\Vert_{L^\infty([0,T_1])} \right), \\
			&  = \frac{1}{2} \left( \left\Vert U_1 - U_2 \right\Vert_{C(D_T)} 
			+ \left\Vert h_1' - h_2' \right\Vert_{L^\infty([0,T])} \right).
		\end{align*}
		Thus $\tilde{\mathcal{F}}$  is a contraction mapping on $ X_{T}$. Therefore, it has a unique fixed point  $(U, h) $ in $ X_{T}$. The maximum principle implies that $U > 0$ in $[0,T] \times (0,1]$.
		\medskip

		\noindent	$\textbf{Step\,4}.$ We apply the Schauder theory to obtain additional regularity for the solution of \eqref{2.1}.
		
		From Step 3, it follows that $U \in C^{\frac{1+\alpha}{2}, 1+\alpha}(D_T)$ and $h \in C^{1+\frac{\alpha}{2}}([0, T])$. Consequently, 		\begin{equation}\label{2.14}
			\xi(t)y = \frac{h'(t)}{h(t)}y \mbox{ lies in } C^{\frac{\alpha}{2}, \alpha}(D_T), \quad \rho(t) = d h^{-2}(t) \mbox{ lies in } C^{\frac{\alpha}{2}, \alpha}(D_T).
		\end{equation}
		Since the initial function $u_0 \in C^{2}([0, h_0])$ lacks sufficient regularity, a direct application of the Schauder theory to \eqref{2.1} is precluded. To circumvent this issue, we use a standard cutoff technique. For an arbitrary small constant $\varepsilon > 0$, select a function $\xi^{*} \in C^{\infty}([0, T])$ satisfying 
		\[
		\xi^{*}(t) = \begin{cases} 
			1 & \text{for } t \in [2\varepsilon, T], \\
			0 & \text{for } t \in [0, \varepsilon].
		\end{cases}
		\]
		Define $V := U \xi^{*}$. Substitution into \eqref{2.1} yields the modified system  
		\begin{equation}\label{2.15}
			\begin{cases}
				V_t = \rho(t) V_{yy} + \xi(t) y V_y + F(t, y), & 0 < t \leq T, \, 0 < y < 1, \\
				V(t, 1) = \delta_0 \xi^{*}(t), & 0 < t \leq T, \\
				V(t, 0) = \delta \xi^{*}(t), & 0 < t \leq T, \\
				V(0, y) = 0, & 0 \leq y \leq 1,
			\end{cases}
		\end{equation}
		where $F(t, y) = U \xi_{t}^{*} - f(U) \xi^{*}$.  
		Given $F \in C^{\frac{\alpha}{2}, \alpha}([0, T] \times [0, 1])$ and \eqref{2.14}, the Schauder estimate applied to \eqref{2.15} yields  
		\[
		\begin{aligned}
			\|U\|_{C^{1+\frac{\alpha}{2}, 2+\alpha}([2\varepsilon, T] \times [0, 1])} &\leq \|V\|_{C^{1+\frac{\alpha}{2}, 2+\alpha}([0, T] \times [0, 1])}
			\leq M \|F\|_{C^{\frac{\alpha}{2}, \alpha}([0, T] \times [0, 1])}.
		\end{aligned}
		\]
		Here $M$ depends on $\varepsilon$, $h_0$, and $\alpha$.  
		As $\varepsilon > 0$ is arbitrary, we deduce that $h \in C^{1+\frac{1+\alpha}{2}}((0, T])$ and $u \in C^{1+\frac{\alpha}{2}, 2+\alpha}(\Omega_T)$. Therefore, $(u, h)$ constitutes a classical solution of \eqref{1.1} on $\Omega_T$.
	\qed

\subsection{Proof of Lemma \ref{le2.5}}
		By the standard theory of parabolic equations, the following initial-boundary value problem
		\[\left\{\begin{array}{ll}
			v_{t}-d v_{x x}=f(v), & t \in(0, T),\, x \in(0, h(t)), \\
			v(t, 0)=\delta_{0},\,v(t, h(t))=\delta, & t \in(0, T), \\
			v(0, x)=0, & x \in\left[0, h_{0}\right],
		\end{array}\right.\]
		admits a unique solution  $v(t, x)$, which satisfies  $v(t, x)>0 $ for $ t \in(0, T)$ and $x \in(0, h(t)] $. Applying the comparison principle, we deduce that $ u(t, x) \geq v(t, x)>0$  for  all $ t \in(0, T)$ and $x \in(0, h(t)] $. This establishes the positivity of $u(t,x)$.

		Next, with  $C_1=\max _{x \in\left[0, h_{0}\right]} u_{0} (x)+1 $,  let  $w(t,x)$  be the unique solution of the following problem
		\begin{equation}\label{sw}
			\left\{\begin{array}{ll}
				w_{t}-d w_{x x}=f(w), & t \in(0, T),\, x \in(0, h(t)), \\
				w(t, 0)=C_1,\,w(t, h(t))=C_1, & t \in(0, T), \\
				w(0, x)=C_1, & x \in\left[0, h_{0}\right],
			\end{array}\right.
		\end{equation}
		Our assumption on $f$ implies that $f(C_1)<0$. Hence $C_1$ is a strict upper (stationary) solution of \eqref{sw}, which implies that $w(t,x)\leq C_1$ for $t\in[0, T)$ and $x\in [0, h(t)]$. Using the comparison principle, we deduce that 
		\(u(t,x)\leq w(t,x)\leq C_1\) for $t\in[0, T)$ and $x\in [0, h(t)]$.
		This proves the first part of the lemma.
		
	For the second part, we consider the following problem	
		\begin{equation}\label{1w}
			\left\{\begin{array}{ll}
				\hat{w}_{t}-d \hat{w}_{x x}=f(\hat{w}), & t>0,\, x \in(0, l), \\
				\hat{w}(t, 0)=0,\,	\hat{w}(t, l)=0, & t >0, \\
				\hat{w}(0, x)=	\hat{w}_0(x), & x \in\left[0, l\right],
			\end{array}\right.
		\end{equation}
	with $	\hat{w}_0\in C^1([0,l])$ satisfying 
	\[	\hat{w}_0(0)=	\hat{w}_0(l)=0, \ \mbox{ and }\ 0<	\hat{w}_0(x)<u_0(x) \ \mbox{ for }\ x\in(0,l).\]
The strong maximum principle gives that $	\hat{w}(t,x)>0$ for $t\geq0$ and $x\in(0,l)$. We can apply the comparison principle to deduce that $u(t,x)\geq 	\hat{w}(t,x)$ for $t\in[0,T)$ and $x\in[0,l]$. Hence
 \[u(t,l/2)	\geq m^*(T):= \min_{t \in [0, T]}		\hat{w}(t,l/2)>0 \mbox{ for } t\in[0,T).\]

Since $f\in C^1$ and $f(0)=0$, there exists $L>0$ such that $f(u)\geq-Lu$ for $u\in[0, C_1]$. Then consider the auxiliary ODE problem
\[
\hat{v}' = -L\hat{v}\quad \text{for } t>0, \quad \hat{v}(0) =\min \{m^*(T), \min_{[l/2,h_0]}u_0(x)\}\in(0,\delta].
\]
Direct calculation gives $\hat{v}(t)=\hat{v}(0)e^{-Lt}\in(0,v(0))$.
Comparing $u$ with $\hat{v}$, we obtain
\[u(t,x)\geq v(t)\geq v(T)=\hat{v}(0)e^{-LT}=:C^*(T)>0 \mbox{ for } t\in[0,T) \mbox{ and } x\in[l/2,h(t)].\]
This completes the proof.		
	\qed

\end{document}